\newtheorem*{TLP}{Takens' Last Problem}
\newtheorem{mthm}{Theorem}
\newtheorem{subthm}{Theorem}
\newtheorem{thm}{Theorem}[section]
\newtheorem{lem}[thm]{Lemma}
\newtheorem{prop}[thm]{Proposition}
\newtheorem{clam}[thm]{Claim}
\theoremstyle{definition}
\newtheorem{rmk}[thm]{Remark}
\numberwithin{equation}{section}
\numberwithin{figure}{section}
\def\smfd{W^{\mathrm{s}}}
\def\umfd{W^{\mathrm{u}}}
\def\lsmfd{W^{\mathrm{s}}_{\mathrm{loc}}}
\def\lumfd{W^{\mathrm{u}}_{\mathrm{loc}}}
\def\Int{\mathrm{Int}}
\def\Im{\mathrm{Image}}
\def\setl{\setlength{\leftskip}{-18pt}}
\begin{document}

%%%%%%%%%%%%%
\title[
Historic and physical wandering domains
]
{
Historic and physical wandering domains for 
wild blender-horseshoes
}

\author{Shin Kiriki}
\address[Shin Kiriki]{Department of Mathematics, Tokai University, 4-1-1 Kitakaname, Hiratuka, Kanagawa, 259-1292, JAPAN}
\email{kiriki@tokai-u.jp}

\author{Yushi Nakano}
\address[Yushi Nakano]{Department of Mathematics, Tokai University, 4-1-1 Kitakaname, Hiratuka, Kanagawa, 259-1292, JAPAN}
\email{yushi.nakano@tsc.u-tokai.ac.jp}

\author{Teruhiko Soma}
\address[Teruhiko Soma]{Department of Mathematical Sciences, Tokyo Metropolitan University, 1-1 Minami-Ohsawa, Hachioji, Tokyo, 192-0397, JAPAN}
\email{tsoma@tmu.ac.jp}

\subjclass[2000]{
Primary: 37C20; 37C29; 37C70; Secondary: 37C25
}
\keywords{historic behaviour, Dirac physical measure, blender-horseshoe, homoclinic tangency}

\date{\today}
%\date{version xxxx}

%%%%%%%%%%%%%
%\dedicatory{Dedicated to Lorenzo J. D\'{\i}az for his 60th birthday}
%%%%%%%%%%%%%

\begin{abstract}
We present diffeomorphisms of wild blender-horseshoes 
which belong to $C^r$ $(1\leq r<\infty)$ closures of 
two types of diffeomorphisms, 
one of which has a historic contracting wandering domain, 
and 
the other has a non-trivial Dirac physical measure  supported by saddle periodic orbit.
It is a non-trivial extension of Colli-Vargas' model \cite{CV01}  
to the higher dimensional dynamics with the use of wild blender-horseshoes.
\end{abstract}
\maketitle

\section{Introduction}\label{s.Introduction}
\subsection{Historicity and physicality}
Ruelle and Takens focused on dynamics which is irregular under Lebesgue measure by introducing the concept of ``historic behaviour" in \cite{R01,T08}. 
Here,  for a map $f$ on a Riemannian manifold $M$, we say that $f$ or some orbit of $f$ has \emph{historic behaviour} 
if there is $x\in M$ such that the forward orbit $\left\{f^{i}(x):i\geq 0\right\}$ has non-converging Birkhoff averages.
That is,  
\[
\frac{1}{n+1}\sum_{i=0}^n\delta_{f^i(x)} 
\]
dose not converge as $n\to\infty$
in the weak*-topology, 
where $\delta_{f^i(x)}$ is the Dirac measure on $M$ supported at $f^i(x)$. 
There have been sporadic studies of examples of the non-existence of Birkhoff average, while the following open questions have been 
proposed in order to study dynamical historicity from a unified point of view.

\begin{TLP}[\cite{T08}]
Are there persistent classes of smooth 
dynamical systems such that the set of points whose orbits have historic
behaviour has positive Lebesgue measure?
\end{TLP}
 
Affirmative answers to the problem  are already provided for non-hyperbolic situations.
To explain it, 
recall the non-hyperbolic phenomenon given by Newhouse \cite{N79} that,  
for any $C^{2}$ diffeomorphism on a smooth manifold $M$ of $\dim M=2$
with a homoclinic tangency of a saddle periodic point, 
there is an open set $\mathcal{N} \subset \mathrm{Diff}^{2}(M)$ whose closure contains $f$ 
and such that any $g\in \mathcal{N}$  
has a $C^{2}$-robust homoclinic tangency of some  
hyperbolic sets $\Lambda_g$ which is homoclinically related to the 
continuation of the saddle periodic point. 
We say that such a $C^2$-open set of nonhyperbolic diffeomorphisms is  a ($C^{2}$-)\emph{Newhouse open set} or \emph{domain}. 
An affirmative answer was detected in any $C^{2}$-Newhouse open set by the use of non-trivial wandering domains as follows.
Here the \emph{non-trivial wandering domain} for $f$ means a non-empty connected open set $\mathbb{D}\subset M$ with the following conditions: 
\begin{itemize}
\item $f^{i}(\mathbb{D})\cap f^{j}(\mathbb{D})=\emptyset$ for any integers $i, j\geq 0$ with $i\neq j$; 
\item the union of $\omega$-limit sets of all $x\in\mathbb{D}$,  $\omega(\mathbb{D}, f)=\bigcup_{x\in \mathbb{D}}\omega(x,f)$, is not equal to a single periodic orbit.
\end{itemize}
A wandering domain $\mathbb{D}$ is called \emph{contracting} 
%(respectively, \emph{volume-contracting})
if the diameter of $f^{i}(\mathbb{D})$ 
%(respectively, the Lebesgue measure of $f^{i}(\mathbb{W})$)  
converges to zero as $i\to +\infty$. 
Existence of contracting non-trivial  wandering domains 
was first brought to light by Colli-Vargas \cite{CV01} for a prototype of 
wild hyperbolic set, that is, an affine thick horseshoe with homoclinic tangencies. 
Kiriki and Soma \cite{KS17}
not only generalised their results in any $C^{2}$-Newhouse open set, 
but also identified the presence of historic behaviour as follows:
 any Newhouse open set in $\mathrm{Diff}^{r}(M)$, $\dim M=2$ and $2\leq r<\infty$, contains a dense subset 
every element of which 
 has a \emph{historic wandering domain}, 
i.e.~a non-trivial wandering domain of points whose orbits have historic behaviour. 
Note that arguments in \cite{KS17} are not extendable to $C^\infty$-diffeomorphisms, 
but Berger and Biebler \cite{BB} overcome this difficulty with completely different methods.

The arguments of \cite{N79} would not be applicable to $C^1$-diffeomorphisms on 2-dimensional 
manifold $M$. In fact, 
$\mathrm{Diff}^1(M)$ with $\dim M=2$ contains a generic subset 
where every diffeomorphism has no homoclinic tangency \cite{M11}. Thus any method similar 
to that in \cite{KS17} might be irrelevant for $\mathrm{Diff}^1(M)$ 
if $\dim M=2$. On the other hand, if $\dim M\geq 3$,
Bonatti and D\'{\i}az \cite{BD12} presented an open set of $\mathrm{Diff}^1(M)$ such that  every diffeomorphism in the open set has a $C^1$-robust homoclinc tangency. Nowadays,  it is called a \emph{$C^{1}$-Newhouse domain}. 
%The main goal of this paper is to give a contribution to solve  
%Takens' Last Problem even in the $C^1$ category. 

Before stating our result,
we recall the notion of classical regularity  which is   
quite opposite to that of historic behaviour.
We say that 
$f$ has a \emph{Dirac physical measure} $\nu$
associated with 
a wandering domain $\mathbb{D}$ 
 if   for every $x\in \mathbb{D}$
\[\lim_{n\to+\infty} \frac{1}{n+1}\sum_{i=0}^{n}\delta_{f^i(x)}=\nu\] 
and the support of $\nu$ is 
equal to a  periodic orbit of $f$. Moreover such a $\nu$ is \emph{non-trivial} 
if the periodic orbit is of saddle type.
It implies that 
$\mathbb{D}$ is contained in the basin of $\nu$, 
which  
has positive Lebesgue measure because $\mathbb{D}$  is a non-empty open set.
In this sense,  it is traditional usage to refer to $\nu$ as \emph{physical} or \emph{SRB}, see \cite{CTV19}. 
Dirac physical measures were studied for some transitive flows such that 
the supports of measures are non-attracting orbits in \cite{SSV10,SV13}. On the other hand, for 
diffeomorphisms,  using the prototype of wild hyperbolic set, 
Colli and Vargas presented a non-trivial Dirac physical measure associated with a wandering domain in \cite{CV01}, which can  be extended in 
some dense subset of the $C^{r}$($2\leq r<\infty$)-Newhouse domain in  \cite{KS17}. 
In a $C^{1}$-generic standpoint, several negative observations 
about the existence of Dirac physical measures supported on non-attracting periodic orbits and examples are provided in \cite{S18,GGS20}.

We are now ready to state the main theorem.
  \begin{mthm}\label{thm0}
  There is a 3-dimensional diffeomorphism $f$ in the $C^{1}$  Newhouse domain
  such that every $C^{r}$ $(1\leq r<\infty)$ neighbourhood of $f$ contains
  two types of diffeomorphisms, one of which has a historic contracting wandering domain, 
  and the others have non-trivial Dirac physical measures  
  supported by saddle periodic orbits.
 \end{mthm}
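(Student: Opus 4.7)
\emph{Strategy.} The plan is to transport the inductive construction of Colli--Vargas \cite{CV01} and its Newhouse-domain version of Kiriki--Soma \cite{KS17} into the three-dimensional $C^1$-Newhouse setting, replacing the two-dimensional affine thick horseshoe by a wild blender-horseshoe provided by Bonatti--D\'{\i}az \cite{BD12}. The crucial property of the wild blender-horseshoe, which makes it the natural higher-dimensional substitute, is that it carries $C^1$-robust homoclinic tangencies while its skeleton of invariant manifolds still admits small $C^r$-perturbations for every finite $r$; this is what lets us aim for a single base map $f$ in the $C^1$-Newhouse domain whose $C^r$-neighbourhoods, for all $1\leq r<\infty$, contain the two desired classes of diffeomorphisms.

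\emph{Initial data.} First I would fix a smooth $f\in \Diff^\infty(M)$ with $M$ a $3$-manifold, lying in the $C^1$-Newhouse domain and possessing a wild blender-horseshoe $\Lambda$ with two distinguished saddle periodic orbits $O_P$ and $O_Q$ that are homoclinically related inside $\Lambda$, together with a homoclinic tangency associated to $O_P$. Using the covering property of the blender, I would place a small open box $\mathbb{D}_0$ near the tangency so that, after a bounded return time, $\mathbb{D}_0$ is sent back into the ambient cube of $\Lambda$ and its image can be steered by an arbitrarily small $C^r$-perturbation into a preassigned Markov rectangle around either $O_P$ or $O_Q$.

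\emph{Inductive perturbation.} Following the inductive scheme of \cite{CV01,KS17}, I would then build two Cauchy sequences of $C^r$-diffeomorphisms $(f_k^{\mathrm{h}})$ and $(f_k^{\mathrm{d}})$, both $C^r$-converging to the same $f$. At step $k$ an extra homoclinic tangency of the blender is created so that the $k$-th return of $\mathbb{D}_0$ shadows $O_P$ or $O_Q$ along a prescribed symbolic itinerary while its diameter is crushed by a definite geometric factor, ensuring that $\mathbb{D}_0$ persists as a contracting non-trivial wandering domain in the limit. For the historic sequence $(f_k^{\mathrm{h}})$ the itinerary consists of blocks of lengths $N_k$ alternating between $O_P$ and $O_Q$ with $N_{k+1}/\sum_{j\leq k}N_j\to\infty$, so that the empirical measures of every point of $\mathbb{D}_0$ oscillate between $\delta_{O_P}$ and $\delta_{O_Q}$. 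For the Dirac sequence $(f_k^{\mathrm{d}})$ the schedule is lopsided, with an overwhelming proportion of visits devoted to $O_P$, producing a non-trivial Dirac physical measure $\nu$ supported on $O_P$ whose basin contains $\mathbb{D}_0$.

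\emph{Main obstacle.} The hardest part will be the geometric step in the $C^1$-wild regime: in dimension three the Colli--Vargas thickness argument, which relied on the Moreira-type gap lemma for one-dimensional Cantor sets, is no longer directly available. Instead, one must exploit the covering property of the blender-horseshoe in its central direction to realise the ``drop'' that sends $\mathbb{D}_0$ alternately into the $O_P$- and $O_Q$-branches of $\Lambda$ under $C^r$-small perturbations, and then show that such drops can be iterated indefinitely while keeping the total perturbation $C^r$-summable and the diameters contracting. This demands simultaneous control of the stable foliation of $\Lambda$, of its central-unstable direction, and of the distortion incurred near the tangency, and is where the bulk of the technical work should lie.
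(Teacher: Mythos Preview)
Your strategic outline is broadly aligned with the paper's route: fix a specific wild cs-blender-horseshoe with a $C^1$-robust tangency, then perform an infinite but $C^r$-summable family of perturbations localised near the tangency so that a small box is steered through a prescribed itinerary. You have also correctly located the central difficulty, namely that the Colli--Vargas thickness/gap machinery is unavailable here. But the proposal leaves the replacement mechanism unspecified, and the appeal to ``the covering property of the blender in its central direction'' is not, by itself, enough. What the paper actually uses is a concrete inverse-IFS argument in the centre-stable coordinate (Lemma~\ref{l_zeta}): the overlap condition $\lambda_{cs0}+\lambda_{cs1}>1$ forces the images of $\zeta_0,\zeta_1$ to share the interval $[\beta,\lambda_{cs0}]$, so any small target interval, pulled back $m_k$ times by $\zeta_0^{-1}$ or $\zeta_1^{-1}$, eventually covers this overlap; crucially $m_k$ is bounded \emph{linearly} in $k$. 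This linear bound is precisely what yields $|t_{k+1}|\leq \lambda_u^{-(n_0+(k+1)(1+L))}$ in Proposition~\ref{lp_Prop_5_1}, and hence the $C^r$-summability in Proposition~\ref{pbn} once $L$ is chosen large for the given finite $r$. Without this quantitative estimate there is no way to verify that the successive ``drops'' can be iterated with summable $C^r$-cost.

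Two further constraints are essential and are absent from your plan. First, the blender must be \emph{partially dissipative}, $\lambda_{cs0}\lambda_{cs1}\lambda_u^2<1$ (condition~\eqref{pdc}); this asymmetry is what drives Lemma~\ref{lem7.3} and hence the nesting of boxes in the unstable direction. Second, the free part of the itinerary is subject to a \emph{majority condition} $\widehat n_{k(1)}\leq \widehat n_{k(0)}$ (condition~\eqref{dom}), again needed for Lemma~\ref{lem7.3}. This last point actually contradicts your proposed historic schedule of ``blocks alternating between $O_P$ and $O_Q$'': one can never spend a majority of time in $\mathbb V_1$, so the empirical measures cannot be made to oscillate between $\delta_{O_P}$ and $\delta_{O_Q}$. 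The paper instead oscillates between two itineraries with zero-fractions $3/4$ and $7/8$, both satisfying \eqref{dom}, and for the same reason the Dirac physical measure is obtained only on $P_g$ (or more general periodic orbits whose code has a majority of zeros), not on $Q_g$; see Remarks~\ref{obs} and~\ref{tg}.
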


\begin{rmk}\label{novelty1}
The following scenario may come to mind to detect historic wandering domains using several known  facts
in $C^{1}$ dynamics of dimension at least 3. 
In fact, Barrientos  \cite{Ba-pre} essentially implemented the scenario. In \cite{BDP03} 
Bonatti, D\'{\i}az and Pujals gave $C^{1}$ open sets of diffeomorphisms  
with a $C^{1}$ dense subset
of diffeomorphisms admitting a periodic disk, that is, 
there is a large integer $n$ such that $f^{n}$ is equal to the identity map on some disk. 
Then perturb $f^{n}$ in order to get a $C^{2}$-homoclinic tangency 
in a smooth normally contracting surface $\Sigma$ by \cite{Ro95}, 
and apply \cite{KS17} to get a historic wandering domain
by perturbation of the restriction $f^{n}\vert_{\Sigma}$. 
However, the proofs in \cite{KS17} are founded on some \emph{unbalanced weight assumption} 
to control dynamics, see \cite[Remark 2.1]{KS17}.
Thus, 
one can use the scenario  to confirm the existence of non-trivial Dirac physical measure supported on the saddle fixed point,  but one cannot use it to confirm that on the 2-periodic orbit.
On the other hand,  
the direct way provided in this paper is 
not only useful for both, but may be applied to 
non-trivial Dirac physical measure supported by saddle periodic orbit of every period.
See the final Remark \ref{tg}.
\end{rmk}

We believe that there exist a locally dense $C^{1}$ diffeomorphisms
which satisfy the same properties as in Theorem \ref{thm0} and Remark \ref{tg}.
In particular, there will be Dirac physical measures for periodic orbits of every period.
While this paper is limited to specific models, 
it will shed light on ``pluripotentiality'' of wandering domains, 
that is, the existence of them whose orbits can approximate statistically to every dynamics on given  invariant sets.

Note that the residual subset of blender-horseshoe causing historic behaviour in \cite{BKNRS20} 
has zero Lebesgue measure, while the historic wandering domain given in Theorem \ref{thm0} has positive Lebesgue measure. We mention another novelty that this paper contains.
To prove Theorem \ref{thm0}, we borrow the key idea called  
``critical chain'' from \cite{CV01}.  However Colli and Vargas proved their main technical results (Linking and Linear Growth Lemmas) which support the proof of their Critical Chain Lemma 
by using $C^{2}$-robust homoclinic tangencies, and hence they did not employ contexts of $C^{1}$-robust ones. 
So, instead of their technical results, we present an innovation (Lemma \ref{l_zeta} with Proposition \ref{lp_Prop_5_1} in Section \ref{S.CC}), which takes advantage of 
the distinctive property of inverse dynamics of the cs-blender horseshoe. It should also be emphasised that our proof might be considerably simpler than that of Colli-Vargas.

\subsection{Centre stable blender-horseshoes}
In this subsection and the next, we give a concrete construction of $f$ in Theorem \ref{thm0}. 
All this can be extended to structures called blender-horseshoes, which can be  
defined for diffeomorphisms of all dimensions greater than or equal to three.
However, a 3-dimensional case contains all essential properties on blender-horseshoes, and therefore we  discuss them only in this case.

Let 
$\lambda_{ss}, \lambda_{cs0}, \lambda_{cs1}$ and $\lambda_{u}$ be real positive constants with 
\begin{equation}\label{ev}
\lambda_{ss}<\lambda_{cs0}<1/2<\lambda_{cs1}<1<\lambda_{cs0}+\lambda_{cs1},\  2<\lambda_{u}.
\end{equation}
Furthermore, we suppose that $\lambda_{cs0}$ is relatively small compared to $\lambda_{cs1}$ and $\lambda_{u}$ so that  
\begin{equation}\label{pdc}
\lambda_{cs0}\lambda_{cs1}\lambda_{u}^{2}<1,
\end{equation}
which corresponds to the partially dissipative condition for $3$-dimensional diffeomorphisms given below.
We first consider the 2-dimensional affine horseshoe map $F$ with 
\[
F(x,y)=\left\{\begin{array}{ll}
(\lambda_{u}x,\lambda_{ss}y)&\text{if}\ (x,y)\in [0,\lambda_{u}^{-1}]\times[0,1], \\[2pt]
(\lambda_{u}(1-x), 1-\lambda_{ss}y)&\text{if}\ (x,y)\in [1-\lambda_{u}^{-1}, 1]\times[0,1],
\end{array}
\right.
\]
and the iterated function system consisting of the pair of contracting 1-dimensional maps defined 
as, for $z\in [0,1]$, 
\begin{subequations} 
 \begin{equation}\label{IFS}
\zeta_{0}(z)=\lambda_{cs0}z, \ 
\zeta_{1}(z)=\lambda_{cs1}z+\beta,
\end{equation}
where $\beta=1-\lambda_{cs1}$. 
Let 
$\mathbb{B}$ be the unit cube $[0,1]^{3}$ and let 
$f: \mathbb{B}\rightarrow \mathbb{R}^{3}$ be a local diffeomorphism
satisfying the following conditions:
\begin{itemize}
\item $f\vert_{\mathbb{V}_{0}\cup \mathbb{V}_{1}}$ is 
the skew product $F\ltimes(\zeta_{0}, \zeta_{1})$ given by 
 \begin{equation}\label{blender}
f(x,y,z)=
\left\{\begin{array}{ll}
(\lambda_{u}x,  \lambda_{ss}y, \lambda_{cs0}z )& \text{if}\ (x,y,z)\in \mathbb{V}_{0},\\[2pt]
(\lambda_{u}(1-x),   1-\lambda_{ss}y, \lambda_{cs1}z+\beta ) &  \text{if}\  (x,y,z)\in \mathbb{V}_{1},
\end{array}
\right.
\end{equation}
where $\mathbb{V}_{0}= [0,\lambda_{u}^{-1}]\times[0,1]^{2}$ and 
$\mathbb{V}_{1}= [1-\lambda_{u}^{-1},1]\times[0,1]^{2}$.
\item 
For 
$\mathbb{G}=\mathbb{B}\setminus(\mathbb{V}_{0}\cup \mathbb{V}_{1})$, $f(\mathbb{G})$ is 
contained in $\mathbb{R}^3\setminus \mathbb{B}$.
\end{itemize}
\end{subequations} 
We now consider the hyperbolic set $\Lambda=\bigcap_{i\in \mathbb{Z}} f^{i}(\mathbb{V}_{0}\cup \mathbb{V}_{1})$ of $f$ in $\mathbb{B}$ 
on which $f$ is conjugate to the full shift of two symbols, and besides 
$\Lambda$ contains the saddle fixed points  
\[
P=(0,0,0)\in \mathbb V_{0}\cap f(\mathbb V_{0}),\ 
Q=\left(\lambda_{u}(1+\lambda_{u})^{-1}, (1+\lambda_{ss})^{-1}, 1\right)\in \mathbb V_{1}\cap f(\mathbb V_{1}).
\]
\begin{rmk}[Asymmetricity]\label{rmk2}
The inequalities 
\eqref{ev} and \eqref{pdc} imply a partially dissipative situation for $f\vert_{\Lambda}$, 
which gives  asymmetrical contractions along the centre-stable direction for the cs-blender horseshoe, see Figure \ref{fig/blender}.
We will see that these conditions are essential to show 
Lemma \ref{lem7.3} and Theorem \ref{thmWD}.
Note that these conditions  cannot be fulfilled if both $\zeta_{0}$ and $\zeta_{1}$ are close to the identity.
Therefore, 
the cs-blender horseshoe with partially dissipative situation
might be $C^{1}$-away from usual one which can be derived 
from a heterodimensional cycle via some strongly homoclinic intersection 
by an arbitrarily small perturbation in \cite[Section 4]{BD08}. 
See also Remark \ref{tg}.

\end{rmk}
\begin{figure}[hbt]
\centering
\scalebox{0.6}{
\includegraphics[clip]{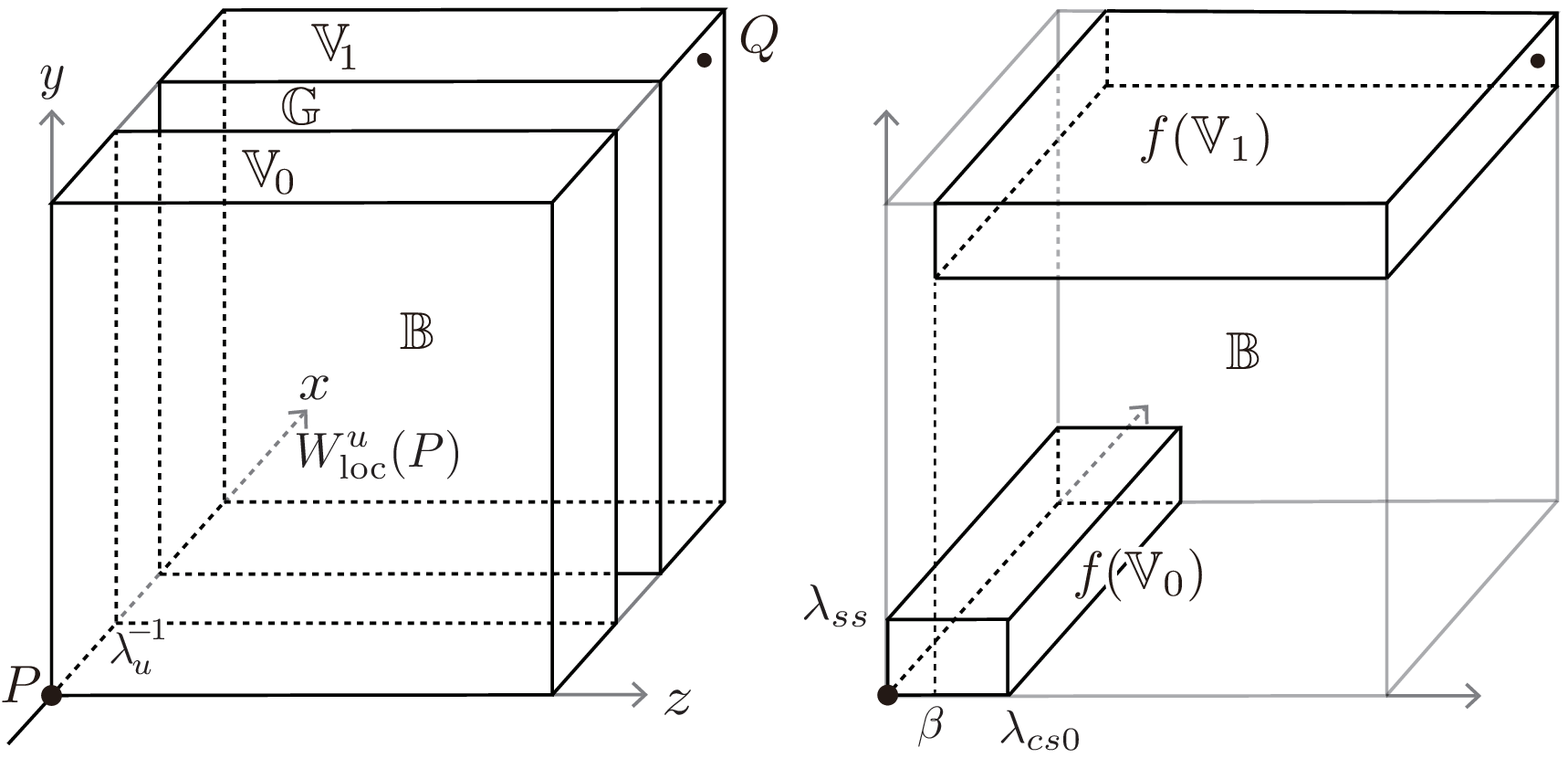}
}
\caption{ } 
\label{fig/blender}
\end{figure}

Despite its asymmetric structure, $\Lambda$ still satisfies all the standard properties of blender. 
In fact, 
$\Lambda$ is an example of \emph{cs-blender horseshoe},
refer to \cite[Definition 3.9]{BD12} for the precise definition.
Note that 
there is a $C^{1}$-neighbourhood $\mathcal{N}_{f}$ of $f$  
such that every $\tilde{f}\in \mathcal{N}_{f}$ 
has the continuation $\Lambda_{\tilde{f}}$ of $\Lambda$ which is 
a cs-blender horseshoe containing the 
continuations $P_{\tilde{f}}$ and $Q_{\tilde{f}}$ 
of $P$ and $Q$, respectively.
Moreover it follows from \eqref{ev} that $\beta<\lambda_{cs0}$, 
that is, $\tilde f$  still  has a superposition region associated with $\Lambda_{\tilde f}$ and lying between $\lumfd(P_{\tilde{f}})$ and $\lumfd(Q_{\tilde{f}})$. See
\cite[Lemma 1.11]{BD96} and  \cite[Lemma 3.10]{BD12} for details.

\subsection{Configurations of tangency}
Next, to investigate homoclinic tangencies under the setting of cs-blender horseshoe,
we assume the following conditions on the second iterate of $f\vert_{\mathbb{G}}$:  for  a given $0<\delta<\frac{1}{2}-\lambda^{-1}_{u}$, the restriction of $f^{2}$ to the $\delta$-neighbourhood $U_{\delta}$ of the 
2-dimensional disc $\{x=1/2\}\cap \mathbb{G}$ is given by   
\begin{subequations} 
\begin{equation}\label{tang}
f^{2}(x,y,z)=\left(-a_{1}\Bigl(x-\frac{1}{2}\Bigr)^{2}+a_{2}z,\ a_{3}\Bigl(y-\frac{1}{2}\Bigr)+\frac{1}{2},\ 
a_{4}\Bigl(x-\frac{1}{2}\Bigr)+\frac{1}{2} \right)
\end{equation}
for $(x,y,z)\in U_{\delta}$, where the coefficients $a_{1}$, $a_{2}$, $a_{3}$ and $a_{4}$ are nonzero
real constants with 
\begin{equation}\label{a_i}
a_{1}>(1-2\lambda_{u}^{-1})^{-1},\  
a_{2}, a_{4}>0,\ 
|a_{3}|<1-2\lambda_{ss}.
\end{equation}
\end{subequations} 
The first condition is used to show Proposition \ref{proj-w} which 
leads to wandering domains disjoint from $\Lambda$, and
the second one is necessary to show Lemma \ref{rt}.
The last one assures that $f^{2}(\mathbb{G})\cap \mathbb{B}$
has no intersection with $f(\mathbb{V}_{0})\cup f(\mathbb{V}_{1})$, see Figure \ref{tang_fig0}.

 \begin{figure}[hbt]
\centering
\scalebox{0.6}{
\includegraphics[clip]{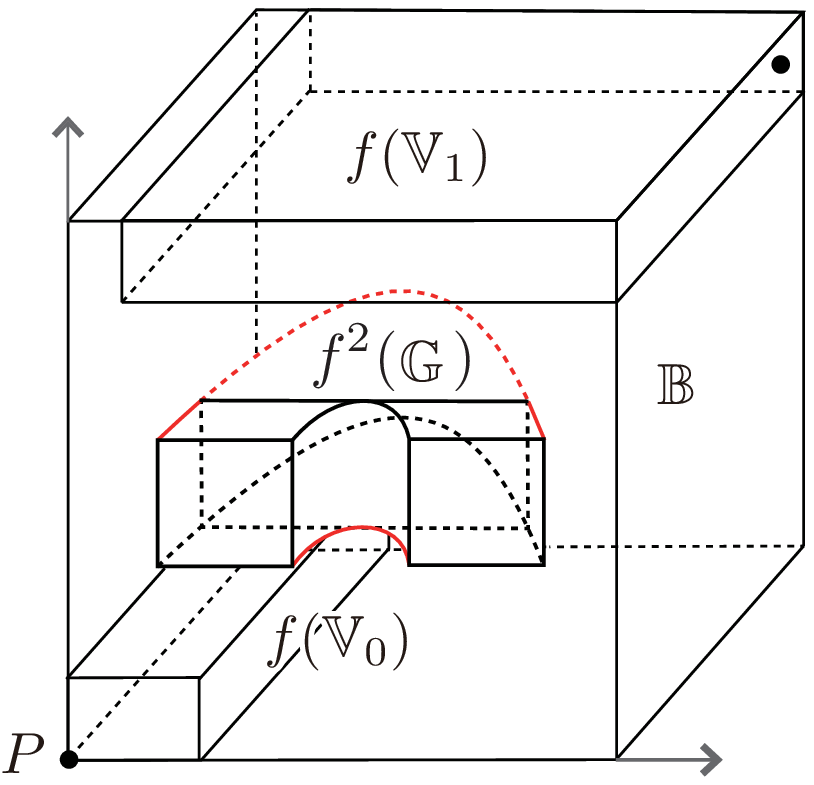}
}
\caption{} 
\label{tang_fig0}
\end{figure}

We say that 
a blender horseshoe $\Lambda$ is \emph{wild} if 
there are points $x, x^{\prime}\in \Lambda$ 
such that $\umfd(x)$ and $\smfd(x^{\prime})$ have 
a non-transverse intersection.
Let 
the $f^{2}$-image of $(1/2,0,0)\in \mathbb{G}$ be written as $X$, 
which satisfies  
\[X=(0, 1/2-a_{3}/2, 1/2)\in \umfd(P)\cap\lsmfd(P),\  
T_{X}\umfd(P)\subsetneq T_{X}\lsmfd(P).\] 
That is, $f$ has a homoclinic tangency of $P\in \Lambda$.  See Figure \ref{tang_0_fig}-(a). 
Therefore the cs-blender horseshoe $\Lambda$ for \eqref{blender} is wild. 
Furthermore, this homoclinic tangency is robust for small $C^{1}$ perturbation:
\begin{lem}\label{rt}
Let $f$ be a diffeomorphism with \eqref{blender} and \eqref{tang}. 
Then $f$  has a $C^{1}$-robust homoclinic tangency of the cs-blender horseshoe $\Lambda$.
\end{lem}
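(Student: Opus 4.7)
The plan is to first verify that $f$ itself has a non-degenerate homoclinic tangency of $P$ at $X$ by direct computation, and then to upgrade this to $C^1$-robustness by invoking the defining property of the cs-blender horseshoe $\Lambda_{\tilde f}$ for every $\tilde f\in\mathcal{N}_f$.

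For the first step, take the horizontal arc $\gamma_0=\{(x,0,0):|x-1/2|<\delta\}$, which lies in $\umfd(P)$ since backward iteration under the $\mathbb{V}_0$-branch contracts it to $P$. By \eqref{tang} its image
\begin{equation*}
\gamma = f^2(\gamma_0)=\Bigl\{\Bigl(-a_1(x-\tfrac12)^2,\;\tfrac12-\tfrac{a_3}{2},\;a_4(x-\tfrac12)+\tfrac12\Bigr):|x-\tfrac12|<\delta\Bigr\}
\end{equation*}
is a quadratic arc in $\umfd(P)$ whose vertex $X=(0,\tfrac12-\tfrac{a_3}{2},\tfrac12)$ lies on $\lsmfd(P)=\{x=0\}\cap\mathbb{B}$. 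Its velocity at the vertex is $(0,0,a_4)$, which is contained in $T_X\lsmfd(P)=\mathrm{span}(\partial_y,\partial_z)$; hence $T_X\umfd(P)\subsetneq T_X\lsmfd(P)$, a non-degenerate quadratic homoclinic tangency of $P$ at $X$.

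For $\tilde f\in\mathcal{N}_f$ the continuations $P_{\tilde f}$, $Q_{\tilde f}$, $\Lambda_{\tilde f}$ exist, and by $C^1$-continuity of unstable manifolds the corresponding arc $\tilde\gamma\subset\umfd(P_{\tilde f})$ is $C^1$-close to $\gamma$. Because the non-degeneracy $a_1\neq 0$ in \eqref{a_i} is structurally stable, $\tilde\gamma$ still has a unique fold point $\tilde X$ close to $X$, with tangent direction close to $\partial_z$ and non-vanishing second-order coefficient. The positivity $a_2,a_4>0$ from \eqref{a_i} together with the persistent overlap $\beta<\lambda_{cs0}$ ensures that the fold still lies between $\lumfd(P_{\tilde f})$ and $\lumfd(Q_{\tilde f})$, hence inside the superposition region $\mathcal{S}_{\tilde f}$ of $\Lambda_{\tilde f}$.

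The decisive third step uses the cs-blender property. By \cite[Definition 3.9 and Lemma 3.10]{BD12} together with \cite[Lemma 1.11]{BD96}, the local stable lamination $\bigcup_{q\in\Lambda_{\tilde f}}\lsmfd(q)$ is $C^1$-robustly \emph{thick} in $\mathcal{S}_{\tilde f}$: its intersection with any small $2$-disc transverse to the unstable direction is a non-empty Cantor set of leaves whose tangent planes stay close to and vary in a controlled way about the center-stable plane. Combining this with the smooth non-trivial variation of the tangent line to $\tilde\gamma$ along the arc near $\tilde X$, a one-parameter matching argument produces a pair $(q^*,\tilde X^*)$ with $q^*\in\Lambda_{\tilde f}$, $\tilde X^*\in\tilde\gamma\cap\lsmfd(q^*)$, and $T_{\tilde X^*}\tilde\gamma\subset T_{\tilde X^*}\lsmfd(q^*)$. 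This gives a tangency between $\umfd(P_{\tilde f})$ and $\lsmfd(q^*)$, persisting for every $\tilde f\in\mathcal{N}_f$, which is the desired $C^1$-robust homoclinic tangency of $\Lambda$. The hardest part will be making this third step precise: simultaneously ensuring the incidence $\tilde X^*\in\lsmfd(q^*)$ and the tangent-matching $T_{\tilde X^*}\tilde\gamma\subset T_{\tilde X^*}\lsmfd(q^*)$ uses the full strength of the cs-blender definition, where the overlap $\beta<\lambda_{cs0}$ of the IFS \eqref{IFS} plays the essential role.
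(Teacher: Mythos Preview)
Your third step contains a genuine gap: you have the roles of the stable and unstable laminations reversed. For a cs-blender horseshoe of index~$2$, the distinguishing ``thickness'' property concerns the \emph{unstable} lamination, not the stable one. Concretely, in the model \eqref{blender} the local stable set $\bigcup_{q\in\Lambda}\lsmfd(q)$ is a Cantor family of $yz$-squares parametrized by the $x$-coordinate of $q$ in the u-Cantor set $\Lambda_u$; it is nowhere dense in $\mathbb{B}$, and this remains true $C^1$-robustly. The overlap condition $\beta<\lambda_{cs0}$ makes the attractor of the IFS \eqref{IFS} equal to $[0,1]$, and what this buys is that $\lumfd(\Lambda)$ projects onto the full $z$-interval---this is the content of \cite[Lemma~1.11]{BD96} and \cite[Lemma~3.10]{BD12}, which you cite but misapply. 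Consequently your one-parameter matching argument fails: after a generic $C^1$ perturbation the fold point $\tilde X$ of $\tilde\gamma$ has $x$-coordinate outside the (measure-zero) stable Cantor set, so there is no $q^*$ with $\tilde X^*\in\lsmfd(q^*)$, let alone a tangent match.

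The paper's proof reverses the picture. Rather than seeking a stable leaf tangent to the folded unstable arc, it constructs a \emph{folding manifold} $\mathcal{S}\subset\lsmfd(\Lambda)$: a one-parameter family of ss-discs $\mathcal{S}_t$ whose end-slices meet $\lumfd(P)$ (equivalently $\lumfd(Q)$) and whose interior slices lie in the superposition region between $\lumfd(P)$ and $\lumfd(Q)$. One builds $\mathcal{S}'$ as a vertical strip $\{x_0\}\times[\tfrac12-\tfrac{a_3}{2},\tfrac12+\tfrac{a_3}{2}]\times[t_-,t_+]$ with $x_0\in(0,a_2)$ chosen in $\Lambda_u$, checks via \eqref{tang} that the quadratic image $f^2(\ell_1)$ meets $\mathcal{S}'$ in two transverse points (this is where $a_2,a_4>0$ enters), and sets $\mathcal{S}=f^{-1}(\mathcal{S}')$. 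Then \cite[Theorem~4.8]{BD12} is invoked as a black box: a folding manifold inside $\smfd(\Lambda)$ forces a $C^1$-robust tangency, precisely because every ss-disc in the superposition region robustly meets $\lumfd(\Lambda_{\tilde f})$. To repair your argument you should look at the $f^{-2}$-preimage of a piece of $\lsmfd(\Lambda)$, which is genuinely folded by the quadratic term $-a_1(x-\tfrac12)^2$, and verify the folding-manifold criterion for it.
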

\noindent 
The proof of this lemma is given in the Appendix \ref{apdx} as it is just comfirmed that 
\cite[Theorem 4.8]{BD12} can be applicable to our setting.

 \begin{figure}[hbt]
\centering
\scalebox{0.6}{
\includegraphics[clip]{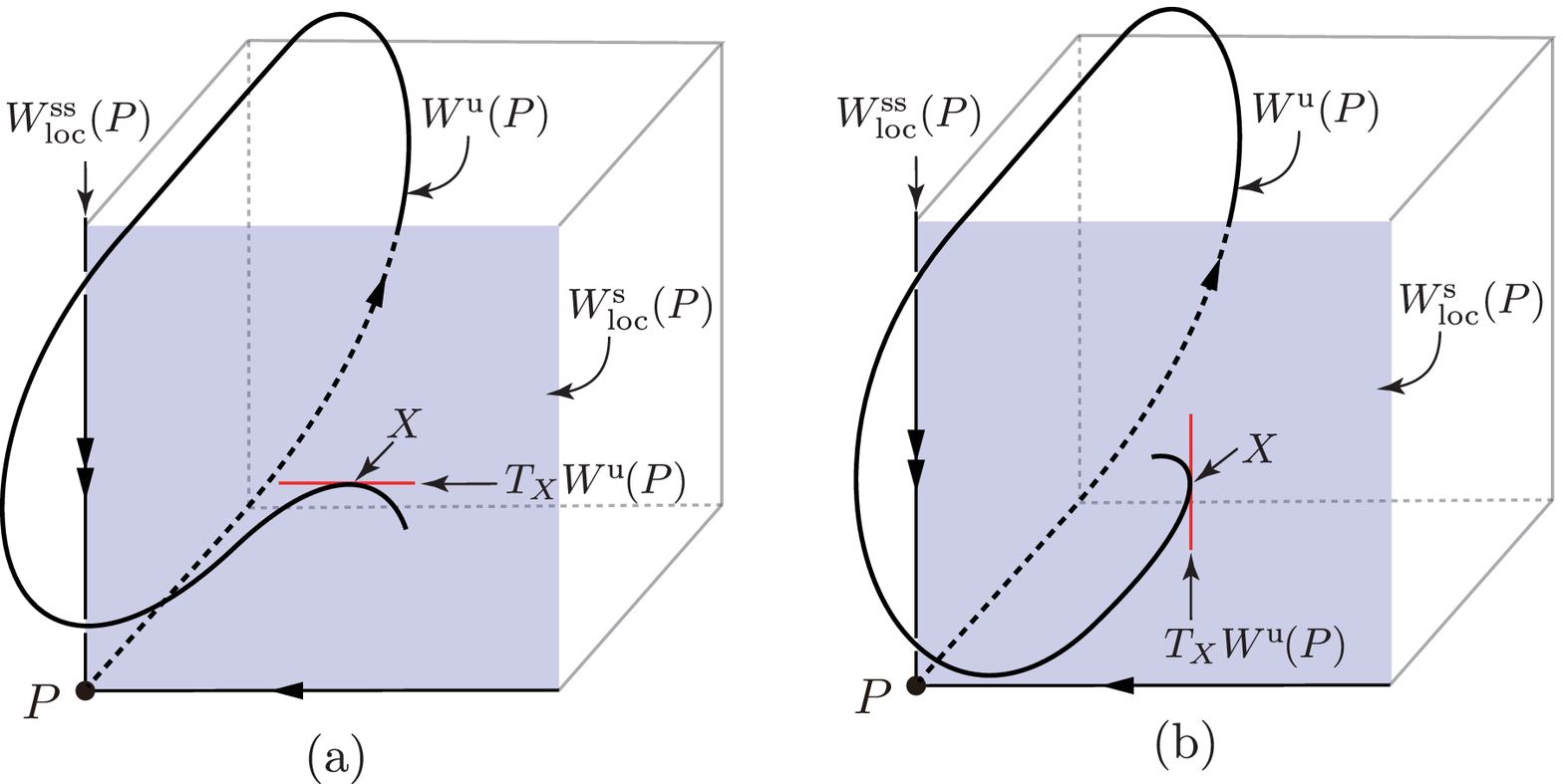}
}
\caption{} 
\label{tang_0_fig}
\end{figure}

\begin{rmk}[Generality of configuration]
We here  explain why the above setting is  a non-trivial extension of \cite{CV01}, 
which is also the second novelty of the present paper.
For a certain map instead of \eqref{tang}, 
 we have another situation such that the direction of 
 $T_{X}\umfd(P)$ is parallel to the ss-direction of $\Lambda$, as in Figure \ref{tang_0_fig}-(b). 
This is an actually trivial extension of  Colli-Vargas' 2-dimensional model with one more dimension, and hence one might obtain similar results as in \cite{CV01} with the help of several techniques in \cite{PV94}. 
However, such a strategy will not get us out of the  $C^{2}$-category. 
Furthermore, 
with a little perturbation, it is possible to maintain the tangency but make a direction different from the ss-direction. Thus, the tangent directions at forward images of such a perturbed tangency are gradually close to the cs-direction, since it is pressed strongly along the ss-direction. 
As a consequence, the situation will be essentially the same as defined by \eqref{tang} 
as in Figure \ref{tang_0_fig}-(a). 
In this sense, the configuration of tangency in this paper is general. 
Combining this situation with Lemma \ref{rt}, we now obtain 
a diffeomorphism having both cs-blender horseshoe and $C^{1}$-robust homoclinic tangency simultaneously.
 \end{rmk}

It follows from  the above definitions and facts that Theorem \ref{thm0} is a consequence of the next theorem.
%   \begin{mthm}\label{thm1}
  \begin{subthm}\label{thm1}
  Every $C^{r}$$(1\leq r<\infty)$-neighbourhood of the above diffeomorphism $f$ with a wild blender-horseshoe contains a diffeomorphism 
 which has a historic 
contracting
 wandering domain. 
 Moreover, it contains 
 another diffeomorphism having non-trivial Dirac physical measures supported by saddle periodic orbits associated with a contracting
 wandering domain.
\end{subthm}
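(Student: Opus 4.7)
The plan is to build, by an arbitrarily small $C^r$ perturbation of $f$, a diffeomorphism whose forward dynamics in a small open disk near the tangent point $X=f^2(1/2,0,0)$ exhibits prescribed asymptotic statistics. Pick a tiny cube $\mathbb{D}_{0}\subset U_\delta$ centred on the critical set $\{x=1/2\}$. Under $f^{2}$, the map (\ref{tang}) folds $\mathbb{D}_0$ and deposits its image on a nearly two-dimensional set close to $\umfd(P)$; after a further number of iterates of $f$ the image re-enters the core $\mathbb{V}_0\cup\mathbb{V}_1$ of the cs-blender, drifts toward some saddle, and is eventually brought back to the neighbourhood $U_\delta$ of the critical fold. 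Each such \emph{critical return} gives us a choice, via the IFS branch $(\zeta_{i_{1}},\dots,\zeta_{i_{m}})$ from (\ref{IFS}), of where inside the superposition region the returning image lands. By iterating this mechanism and choosing the itinerary carefully, we manufacture a contracting wandering domain whose orbit statistics can be prescribed.

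The blender is used as follows. Because $\beta<\lambda_{cs0}$, the superposition region lies between $\lumfd(P_{\tilde f})$ and $\lumfd(Q_{\tilde f})$ for every $\tilde f\in\mathcal{N}_{f}$, so the preimages of any target point in the superposition region along $\{\zeta_{0},\zeta_{1}\}$ form a dense set of positions at which returning segments of $\umfd(P)$ can be placed. Concretely, Proposition \ref{lp_Prop_5_1} together with Lemma \ref{l_zeta} allows us, at each return, to select an inverse-branch word so that the next image of $\mathbb{D}_{0}$ falls either into a tube that shadows the fixed point $P$ for a prescribed number of iterates, or into a tube that shadows $Q$ for a prescribed number of iterates, or more generally shadows any chosen saddle periodic orbit in $\Lambda$. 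The partial dissipation (\ref{pdc}) together with (\ref{a_i}) then ensures that the geometric expansion of order $a_{1}a_{4}$ produced by the fold in (\ref{tang}) is overwhelmed by contraction during the long excursions in the blender, so that the diameters of the successive $f^{i}(\mathbb{D}_{0})$ converge to zero, yielding a contracting wandering domain $\mathbb{D}$.

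From here the two conclusions branch. For the historic case, I would fix two substantially different probability measures, say $\delta_{P_{\tilde f}}$ and $\delta_{Q_{\tilde f}}$, and design an increasing sequence of return times $(n_{k})$ so that between $n_{2k}$ and $n_{2k+1}$ the orbit spends a very long stretch near $P_{\tilde f}$ and between $n_{2k+1}$ and $n_{2k+2}$ a far longer stretch near $Q_{\tilde f}$, each stretch dominating the cumulative length of all previous ones. The Birkhoff averages then oscillate between two distinct weak-$\ast$ limits, and every $x\in\mathbb{D}$ inherits this non-convergence since $\mathbb{D}$ is contracting. For the Dirac physical measure case, fix one saddle periodic orbit $\mathcal{O}$ of $\Lambda$ (e.g.\ $\{P\}$ or $\{Q\}$), and instead choose the itinerary so that the fraction of iterates up to $n_{k}$ spent in an arbitrarily small neighbourhood of $\mathcal{O}$ tends to $1$, say faster than $1-1/k$. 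Then $\frac{1}{n+1}\sum_{i=0}^n\delta_{f^i(x)}\to\delta_{\mathcal{O}}$ for every $x\in\mathbb{D}$.

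The principal obstacle is not the statistical bookkeeping but the \emph{geometric} step of keeping $\mathbb{D}$ wandering while enforcing an arbitrarily prescribed itinerary: each critical return must place the new image in a position that (i) lies inside the superposition region so the next shadow is possible, (ii) is geometrically disjoint from all previous iterates $f^{j}(\mathbb{D})$, and (iii) survives the expansion caused by the fold. In the $C^{2}$ setting, Colli and Vargas controlled (i)--(iii) via their Linking and Linear Growth Lemmas for thick horseshoes; in our $C^{1}$-robust setting these are not available. The innovation, encapsulated in Lemma \ref{l_zeta} and Proposition \ref{lp_Prop_5_1}, handles (i)--(iii) by running the cs-blender \emph{backward}: dense reachability in the superposition region under inverse IFS words together with the asymmetric contraction of Remark \ref{rmk2} simultaneously guarantees selectivity of landing positions, disjointness of returns, and shrinking diameters. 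The remainder of the argument is a standard diagonal/perturbation scheme: the itinerary is realised in finitely many steps up to precision $2^{-k}$ by a $C^r$-perturbation of size $2^{-k}$ supported in a small neighbourhood of the $k$-th return, and the $C^r$-limit belongs to any prescribed $C^r$-neighbourhood of $f$ while satisfying the desired statistical property.
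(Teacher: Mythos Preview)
Your overall architecture matches the paper's: build a critical chain via Lemma~\ref{l_zeta} and Proposition~\ref{lp_Prop_5_1}, perturb near the fold to align returns, obtain a contracting wandering domain, and then choose the free part of the itinerary to force either non-convergence or convergence of Birkhoff averages. But there is a genuine gap in your treatment of the \emph{contraction} of the wandering domain, and it propagates into both of your statistical conclusions.

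You assert that partial dissipation \eqref{pdc} alone guarantees that the fold's expansion is beaten by contraction during the blender excursion. This is not so. The cs-contraction over a block with itinerary $\underline{\widehat w}^{(k)}$ is $\lambda_{cs0}^{\widehat n_{k(0)}}\lambda_{cs1}^{\widehat n_{k(1)}}$, and since $\lambda_{cs1}\lambda_u>1$ by \eqref{ev}, an itinerary dominated by $1$'s gives \emph{insufficient} cs-contraction to absorb the $\lambda_u^{\widehat n_k}$ stretching produced by the fold; the image of $\mathbb{D}$ then overflows the next target bridge in the $x$-direction and the construction collapses. The paper handles this by imposing the \emph{majority condition} $\widehat n_{k(1)}\le \widehat n_{k(0)}$ (equation \eqref{dom}), which together with \eqref{pdc} is exactly what drives Lemma~\ref{lem7.3} and hence Proposition~\ref{x-width}. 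This condition is not a bookkeeping nicety; it is a hard constraint on the admissible free words $\underline{v}^{(k)}$.

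Consequently your two endgame schemes, as written, fail. In the historic case you propose alternating a long stretch near $P$ with a \emph{far longer} stretch near $Q$; the latter would force $\widehat n_{k(1)}>\widehat n_{k(0)}$ and destroy the wandering domain. The paper instead oscillates between two \emph{mixtures} $\tfrac34\delta_{P_g}+\tfrac14\delta_{Q_g}$ and $\tfrac78\delta_{P_g}+\tfrac18\delta_{Q_g}$, using words with at most a quarter (resp.\ an eighth) of $1$'s, so that the majority condition is never violated (see \eqref{h1}--\eqref{h2} and Claim~\ref{clam2}). In the Dirac case you claim one may take $\mathcal{O}=\{Q\}$; this is explicitly ruled out (Remark~\ref{obs}) because the word $\underline{1}^{k^2}$ is inadmissible. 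The paper obtains $\delta_{P_g}$ via $\underline{v}^{(k)}=\underline{0}^{k^2}$, and more generally Dirac measures on $n$-periodic orbits whose coding has at most half $1$'s (Remark~\ref{tg}). A secondary point: the perturbation in the paper is not a diagonal limit of size-$2^{-k}$ bumps but a single composition $g=f\circ h_{\underline t(L)}$ (Proposition~\ref{pbn}), whose $C^r$-smallness comes from the free depth parameter $L$ via the estimate \eqref{key-r}; your diagonal scheme could be made to work, but you would still need the analogue of $|t_{k+1}|\ll |J_{k+1}^{\rm cs}|^r$, which is again controlled by $L$ rather than by an ad hoc $2^{-k}$.
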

 % \end{mthm}

For the proof of Theorem \ref{thm1}, 
we need to prepare some 
tools associated with the blender-horseshoes,  and give
key results (Lemma \ref{l_zeta}, Proposition \ref{lp_Prop_5_1}) 
for projected dynamics in Section \ref{S.CC} and  
some infinite sequence of perturbations in Section \ref{S.pertb}.
Using the results,   
the existence of the wandering domain is proved by several geometric steps in 
Theorem \ref{thmWD} of Section \ref{s7}.
Finally, 
the existences of historic behaviour (Theorem \ref{hist}) and Dirac physical measure (Theorem \ref{phys}) are shown by probabilistic approaches in Section \ref{s8}.
\smallskip 

\section{Critical chains of bridges}\label{S.CC}
The results given in this section are keys to this paper,
which is associated with several subsequences of u-bridges and its copies in the cs-direction.  
\subsection{Unstable bridges and gaps}\label{ss4.1}
Let $f$ 
be a diffeomorphism with the cs-blender horseshoe $\Lambda$ by 
\eqref{blender}.
We first 
extend the notations of bridges and gaps given in \cite{CV01} as follows.
For any integer $n\geq 1$, let  
$\underline{w}$ be an $n$-tuple of binary codes, that is, $\underline{w}=w_{1}\cdots w_{n}$ with $w_{i}\in \{0,1\}$. 
Define dynamically defined rectangular solids as 
\begin{align*}
\mathbb{B}^{\rm u}(n; \underline{w})
&=\left\{x\in \mathbb{B}\ :\  f^{i-1}(x)\in \mathbb{V}_{w_{i}},
i=1,\ldots,n\right\},  \\
\mathbb{G}^{\rm u}(n; \underline{w})
&=\mathbb{B}^{\rm u}(n; \underline{w})\setminus 
\left(\mathbb{B}^{\rm u}(n+1; \underline{w}0)\cup \mathbb{B}^{\rm u}(n+1; \underline{w}1)\right).  
\end{align*}
The former set is called an \emph{unstable bridge} or 
a \emph{u-bridge}, while the latter one an  
\emph{unstable gap} or  a \emph{u-gap}.
Sometimes $n$ and $\underline{w}$ of $B^{\rm u}(n; \underline{w})$ are called the \emph{generation} and \emph{itinerary} 
for the u-bridge, respectively.
If there is no confusion, the number of generation 
may be omitted and $\mathbb{B}^{\rm u}(n; \underline{w})$ and 
$\mathbb{G}^{\rm u}(n; \underline{w})$ may be written as $\mathbb{B}^{\rm u}(\underline{w})$ 
and $\mathbb{G}^{\rm u}(\underline{w})$, respectively.
Observe that if $n$ is fixed, the family
$\{ 
\mathbb{B}^{\rm u}(\underline{w}) : \underline{w}\in \{0,1\}^{n}\}$ 
 consists of 
$2^{n}$ mutually disjoint rectangular solids, which consequently contains 
$2^{n}$ mutually disjoint arcs of $\lumfd(P)$. 
For every $n\geq 1$ and $\underline{w}\in \{0,1\}^{n}$, 
we denote by 
$B^{\rm u}(n; \underline{w})$ (or $B^{\rm u}(\underline{w})$ for short)
the arc $\mathbb{B}^{\rm u}(n; \underline{w})\cap \lumfd(P)$, which 
can be regarded as a subinterval in $[0,1]$, that is, 
\[\mathbb{B}^{\rm u}(\underline{w})\cap \lumfd(P)=B^{\rm u}(\underline{w})\times\{(0,0)\}.\]
Since 
$\mathbb{G}^{\rm u}( \underline{w})\subset \mathbb{B}^{\rm u}(\underline{w})$, 
one can obtain the open interval $G^{\rm u}(n; \underline{w})$ (or $G^{\rm u}(\underline{w})$ for short) on $[0,1]$ satisfying
\[\mathbb{G}^{\rm u}(\underline{w})\cap \lumfd(P)=G^{\rm u}(\underline{w})\times\{(0,0)\}.\]
The closed interval $B^{\rm u}(\underline{w})$  is called  a \emph{u-bridge}, while the open interval 
$G^{\rm u}(\underline{w})$ is called a \emph{u-gap} of  
the \emph{u-Cantor set}
$\Lambda_{u}=\Lambda\cap \lumfd(P)$.  
Finally,  we write
$G_{0}^{\rm u}=[0, 1]\setminus(B^{\rm u}(0)\cup B^{\rm u}(1))$,
and hence  $\mathbb{G}\cap \lumfd(P)=
G_{0}^{\rm u}\times\{(0,0)\}$.

\subsection{Projected dynamics}\label{ss.pd}
The following simple projection can be used,
since our model consists of the affine forms by \eqref{blender}
with a tangency without any distortion given in \eqref{tang}.
For any $(x,y,z)\in \mathbb{B}$ and integer $n>0$, we write
\begin{equation}\label{varphi}
\varphi^n(x,z)=\widehat\pi(f^n(x,y,z))
\end{equation}
if the value of the right-hand side of the equation does not depend on $y$, 
where $\widehat\pi:\mathbb{B}\longrightarrow \mathbb{R}^2$ is the projection defined by $\widehat\pi(x,y,z)=(x,z)$.
By \eqref{tang}, we have
\begin{equation}\label{eqn_1/2z}
\varphi^2(1/2,z)=(a_2z,1/2).
\end{equation}

First, we define sequences 
$\{B_{k}^{\rm u}\}_{k\geq 1}$ and $\{\widetilde B_{k}^{\rm u}\}_{k\geq 0}$
of unstable bridges as follows.
For any integer $n_{0}\geq 0$ and any code $\underline{\widetilde w}^{(0)}\in \{0,1\}^{n_{0}}$, 
let us define
\[\widetilde B_0^{\rm u}=B^{\rm u}(n_0,\underline{\widetilde w}^{(0)}),\] 
which is a $u$-bridge 
contained in $\pi_1\circ \varphi^2(\{1/2\}\times [0,1])$. 
One can take $n_{0}$ so that 
 $\widetilde B_0^{\rm u}\subset (0, a_{2})$, see Figure \ref{f_BkBk}.
Let $B_1^{\rm u}$, $\widetilde B_1^{\rm u}$ be the pair of maximal sub-bridges of $\widetilde B_0$ such that 
$\widetilde B_1^{\rm u}$ lies in the left side of $B_1^{\rm u}$, that is, 
$\max \widetilde B_1^{\rm u}<\min B_1^{\rm u}$.
Then they are represented as 
$B_1^{\rm u}=B^{\rm u}(n_0+1,\underline{\widetilde w}^{(0)}\alpha_1)$ and 
$\widetilde B_1^{\rm u}=B^{\rm u}(n_0+1,\underline{\widetilde w}^{(0)}\widetilde\alpha_1),
$ where $\alpha_1$ is either 0 or 1 and $\widetilde\alpha_1=1-\alpha_1$.
If we write $\underline{\widetilde w}^{(0)}\alpha_1=\underline{w}^{(1)}$ and 
$\underline{\widetilde w}^{(0)}\widetilde\alpha_1=\underline{\widetilde w}^{(1)}$, 
then 
\[B_1^{\rm u}=B^{\rm u}(n_0+1,\underline{w}^{(1)}),\ 
\widetilde B_1^{\rm u}=B^{\rm u}(n_0+1,\underline{\widetilde w}^{(1)}).
\] 
For integer $k>1$, 
we inductively define
the sub-bridges
$B_k^{\rm u}$ and $\widetilde B_k^{\rm u}$ of $\widetilde B_{k-1}^{\rm u}$ 
satisfying 
\[
B_k^{\rm u}=B^{\rm u}(n_0+k,\underline{w}^{(k)}),\ 
\widetilde B_k^{\rm u}=B^{\rm u}(n_0+k,\underline{\widetilde w}^{(k)}), 
\]
where $\underline{w}^{(k)}=\underline{\widetilde w}^{(k-1)}\alpha_k$ and $\underline{\widetilde w}^{(k)}=\underline{\widetilde w}^{(k-1)}\widetilde\alpha_k$ 
for some $\alpha_k,\widetilde\alpha_k$ with $\{\alpha_k,\widetilde\alpha_k\}=\{0,1\}$.
See Figure \ref{f_BkBk}.
%%%%%%%%%%%%%%%%%%%%%%%%%%%%%%%%%%%%%%%%%%%%%%%%%%%%%%%
\begin{figure}[hbtp]
\centering
\scalebox{0.5}{\includegraphics[clip]{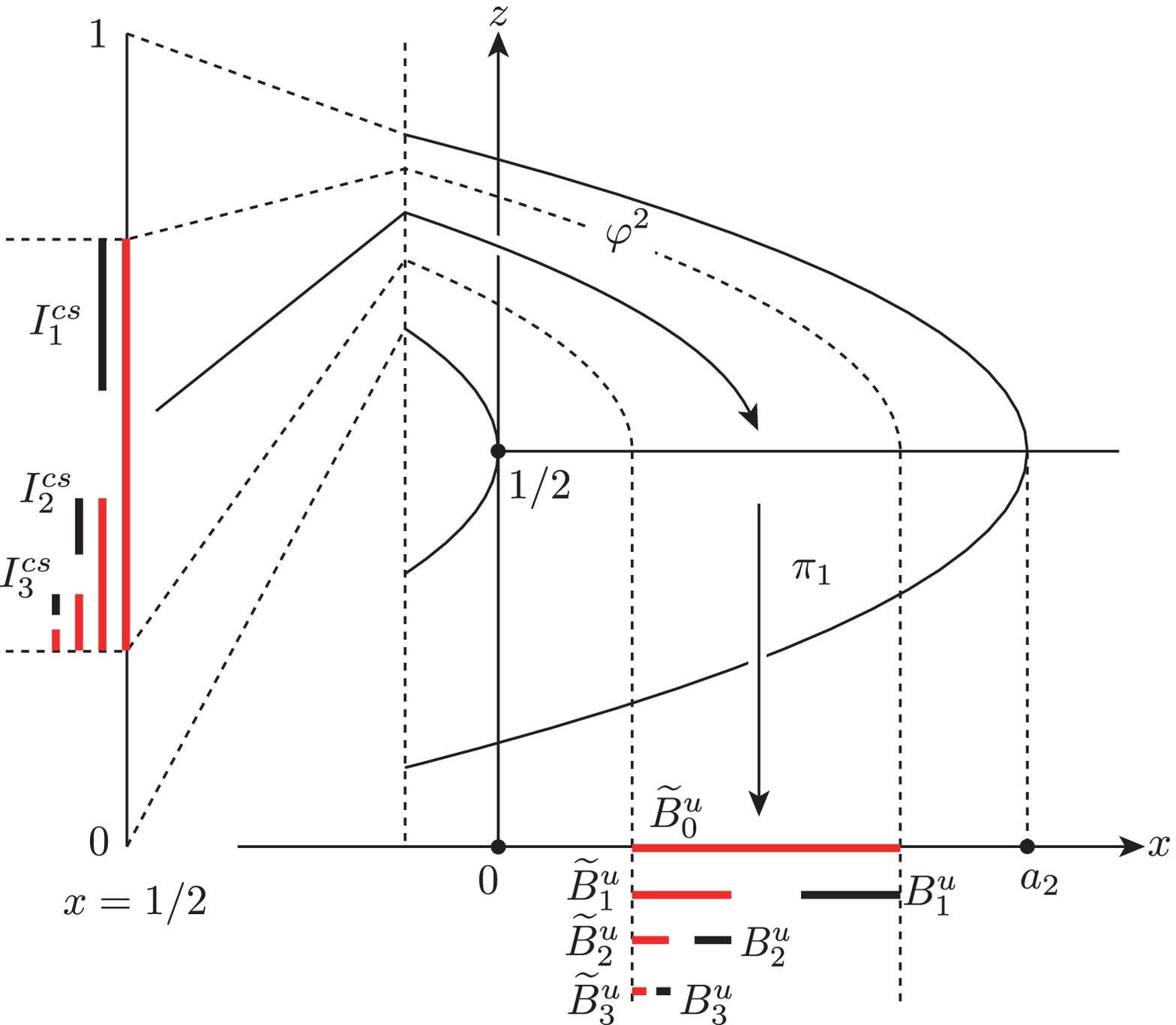}}
\caption{}
\label{f_BkBk}
\end{figure}
%%%%%%%%%%%%%%%%%%%%%%%%%%%%%%%%%%%%%%%%%%%%%%%%%%%%%%%

Next, we define a sequence
$\{J_{k}^{\rm cs}\}_{k\geq 1}$ 
of $\varphi^{2}$-inverse images of $B^{\rm u}_{k}$ as follows.
For every integer $k\geq 1$ and 
sub-bridge $B^{\rm u}_{k}$ of $\widetilde B_0^{\rm u}$, let $I_{k}^{\rm cs}$ be the arc in 
$\{1/2\}\times [0,1]$ 
with 
\[\pi_1\circ\varphi^2(I^{\rm cs}_{k})=B^{\rm u}_{k},\] 
where $\pi_{1}:\mathbb{R}^{2}\to \mathbb{R}$ is the projection with $\pi_{1}(x,z)=x$.
Define $J_{k}^{\rm cs}$ as the sub-interval of $[0,1]$ such that 
\[\{1/2\}\times J_{k}^{\rm cs}=I_{k}^{\rm cs},\]
 and 
call it the \emph{cs-interval} associated with $B^{\rm u}_{k}$. 
By \eqref{eqn_1/2z}, 
\begin{equation}\label{J&B}
J^{\rm cs}_{k}=a_2^{-1} B^{\rm u}_{k}.
\end{equation}

For any code $\underline{\gamma}=\gamma_1\gamma_2\cdots \gamma_n\in \{0,1\}^{n}$, 
the map $\boldsymbol{\zeta}_{\underline{\gamma}}^n$ 
(or $\boldsymbol{\zeta}_{\underline{\gamma}}$ for short) is defined by
\[
\boldsymbol{\zeta}_{\underline{\gamma}}^n=\zeta_{\gamma_n}\circ \cdots\circ \zeta_{\gamma_2}\circ \zeta_{\gamma_1},
\]
where each $\zeta_{\gamma_i}$ is the function given in \eqref{IFS}.
Moreover, we define the \emph{length} $\left|\underline{\gamma}\right|$ of $\underline{\gamma}$  
as the total number of symbols in $\underline{\gamma}$, that is,  
$\left|\underline{\gamma}\right|=\left|\gamma_1\gamma_2\cdots \gamma_n\right|=n$.

\begin{lem}\label{l_zeta}
For any integer $L>0$, 
any u-bridge
$\bar  B_{k}^{\rm u}$ $(k=1,2,\ldots)$  with 
$\bar B_{k}^{\rm u}
 = B^{\rm u}(n_0+k+Lk, \underline{w}^{(k+Lk)})\subset B_{k}^{\rm u}$
and any code $\underline{u}^{(k)}$ with $|\underline{u}^{(k)}|\geq 0$, 
there exist codes $\underline{\widehat w}^{(k)}$ 
and 
sub-bridges 
$\widehat{B}^{\rm u}_{k}=B^{\rm u}(\widehat n_{k},\underline{\widehat w}^{(k)})$ of $\bar B^{\rm u}_{k}$
satisfying the following conditions.
\begin{enumerate}[\rm (1)]
\item
$\underline{\widehat w}^{(k)}=\underline{w}^{(k+Lk)}\underline{v}^{(k)}\underline{\gamma}^{(k)}$, where
\begin{itemize}\setl
\item
$\underline{\gamma}^{(k)}=\gamma_{m_k}\gamma_{m_k-1}\cdots \gamma_2\gamma_1$, 
 $\gamma_i\in \{0,1\}$ $(i=1,2,\dots,m_k)$, 
for some integer $m_k$  satisfying 
$0<m_k\leq N_0+N_1k$, 
where $N_0$, $N_1$ are positive integers independent of $k$,
\item
either $\underline{v}^{(k)}=\underline{u}^{(k)}$ or $\underline{v}^{(k)}=\underline{u}^{(k)}\alpha$ for some 
$\alpha\in \{0,1\}$.
\end{itemize}
\item
$\boldsymbol{\zeta}_{\underline{\widehat w}^{(k)}}(1/2)\in 
\bar J_{k+1}^{\rm cs}$ and 
$\widehat{J}^{\rm cs}_{k+1}\subset \bar J_{k+1}^{\rm cs}$, 
where $\bar J^{\rm cs}_{k+1}$ is the cs-interval associated with $\bar B_{k+1}^{\rm u}$.
\end{enumerate}
\end{lem}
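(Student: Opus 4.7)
The plan is to build the itinerary $\underline{\widehat w}^{(k)}$ in three consecutive blocks matching the decomposition prescribed by condition~(1). The mandatory prefix $\underline{w}^{(k+Lk)}$ is forced by the hypothesis that $\widehat B_k^{\rm u}$ must sit inside $\bar B_k^{\rm u}=B^{\rm u}(n_0+k+Lk,\underline{w}^{(k+Lk)})$; the middle block $\underline{v}^{(k)}$ starts from the freely given $\underline{u}^{(k)}$ and may have one extra buffer symbol $\alpha\in\{0,1\}$ appended; the targeting suffix $\underline{\gamma}^{(k)}$ is what must actually be produced, its job being to bring the image $\boldsymbol{\zeta}_{\underline{\widehat w}^{(k)}}(1/2)$ into the prescribed cs-interval $\bar J^{\rm cs}_{k+1}$ together with a whole sub-interval $\widehat J^{\rm cs}_{k+1}$.

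The key tool is the superposition property built into the cs-blender horseshoe: as noted after \eqref{ev} one has $\beta<\lambda_{cs0}$, so $\zeta_0([0,1])=[0,\lambda_{cs0}]$ and $\zeta_1([0,1])=[\beta,1]$ overlap on the interval $[\beta,\lambda_{cs0}]$. From this I would derive the standard blender fact that for every starting point $w$ in the superposition region and every target interval $J$ contained in that region, there exists a word $\underline{\gamma}$ of length
\[
|\underline{\gamma}|\le C\bigl(\log|J|^{-1}+1\bigr)
\]
(with $C$ depending only on $\lambda_{cs0},\lambda_{cs1}$) such that $\boldsymbol{\zeta}_{\underline{\gamma}}(w)\in J$. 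The role of the optional symbol $\alpha$ is precisely to guarantee that, after applying it, the point $z_k':=\zeta_\alpha\bigl(\boldsymbol{\zeta}_{\underline{w}^{(k+Lk)}\underline{u}^{(k)}}(1/2)\bigr)$ already lies in $[\beta,\lambda_{cs0}]$; since $\zeta_0([0,1])\cup\zeta_1([0,1])=[0,1]$, at least one choice of $\alpha\in\{0,1\}$ (or even no $\alpha$ at all) achieves this.

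Applying the length estimate to $J=\bar J^{\rm cs}_{k+1}$, and using \eqref{J&B} together with $|\bar B^{\rm u}_{k+1}|$ comparable to $\lambda_u^{-(n_0+(L+1)(k+1))}$, yields $\log|\bar J^{\rm cs}_{k+1}|^{-1}\le C_1+C_2 k$, and hence a word $\underline{\gamma}^{(k)}$ of length $m_k\le N_0+N_1 k$ independent of $\underline{u}^{(k)}$, which gives item~(1). For item~(2), the cs-interval $\widehat J^{\rm cs}_{k+1}$ associated with the newly created sub-bridge has length comparable to $\lambda_{cs1}^{m_k}|\bar J^{\rm cs}_{k+1}|$ and is therefore much smaller than $|\bar J^{\rm cs}_{k+1}|$; enlarging $\underline{\gamma}^{(k)}$ by at most one extra symbol, which can be absorbed into the constants $N_0$ and $N_1$, keeps the landing point $\boldsymbol{\zeta}_{\underline{\widehat w}^{(k)}}(1/2)$ far enough from the endpoints of $\bar J^{\rm cs}_{k+1}$ to ensure $\widehat J^{\rm cs}_{k+1}\subset \bar J^{\rm cs}_{k+1}$.

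The main obstacle is this simultaneous control: the suffix $\underline{\gamma}^{(k)}$ must be long enough to hit $\bar J^{\rm cs}_{k+1}$ precisely and, together with the sub-bridge it generates, to fit entirely inside it, yet short enough to stay within the linear-in-$k$ bound $N_0+N_1k$. This is where the partially dissipative inequality \eqref{pdc} and the genuine blender condition $\beta<\lambda_{cs0}$ interact in an essential way: the former controls the contraction rates in the cs-direction, while the latter provides the overlap that makes targeting possible at all. I expect the careful part of the proof to be the quantitative comparison of $\lambda_{cs1}^{m_k}$ with the gap between $|\widehat J^{\rm cs}_{k+1}|$ and $|\bar J^{\rm cs}_{k+1}|$, handled inductively so that the constants $N_0,N_1$ stay uniform in $k$.
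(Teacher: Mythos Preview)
Your three-block decomposition and the idea of using the overlap $[\beta,\lambda_{cs0}]$ are correct and match the paper's strategy. However, the ``standard blender fact'' you invoke is essentially the content of the lemma, and the paper's proof supplies it by a concrete \emph{backward} construction rather than a forward one: one repeatedly pulls back the target $\bar J^{\rm cs}_{k+1}$ through inverse branches, setting $\bar J^{{\rm cs}(i)}_{k+1}=\zeta_{\gamma_i}^{-1}(\bar J^{{\rm cs}(i-1)}_{k+1})$ whenever $\bar J^{{\rm cs}(i-1)}_{k+1}$ lies in the image of some $\zeta_{\gamma_i}$. The process terminates at the first $m_k$ for which the pulled-back interval is contained in neither $\mathrm{Im}\,\zeta_0$ nor $\mathrm{Im}\,\zeta_1$; at that moment it necessarily contains the whole overlap $[\beta,\lambda_{cs0}]$. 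The linear bound $m_k\le N_0+N_1k$ drops out immediately from the length estimate $|\bar J^{{\rm cs}(m_k-1)}_{k+1}|\le 1-\beta$, and no induction or ``simultaneous control'' is needed. The suffix is then $\underline{\gamma}^{(k)}=\gamma_{m_k}\cdots\gamma_1$, and any point placed in $[\beta,\lambda_{cs0}]$ by the optional symbol $\alpha$ is automatically mapped into $\bar J^{\rm cs}_{k+1}$ by $\zeta_{\gamma_1}\circ\cdots\circ\zeta_{\gamma_{m_k}}$. This is cleaner than your forward targeting argument and makes the length bound transparent.

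Two further points. First, your treatment of item~(2) is confused: $\widehat J^{\rm cs}_{k+1}$ is the cs-interval associated with $\widehat B^{\rm u}_{k+1}$, which is (by the lemma at index $k+1$) a sub-bridge of $\bar B^{\rm u}_{k+1}$; hence $\widehat J^{\rm cs}_{k+1}=a_2^{-1}\widehat B^{\rm u}_{k+1}\subset a_2^{-1}\bar B^{\rm u}_{k+1}=\bar J^{\rm cs}_{k+1}$ is automatic, and its length has nothing to do with $\lambda_{cs1}^{m_k}$ or the suffix constructed at level $k$. No extra symbol is needed. Second, the partially dissipative condition \eqref{pdc} plays no role in this lemma; only the overlap inequality $\beta<\lambda_{cs0}$ from \eqref{ev} is used. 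Condition \eqref{pdc} enters only later, in the proof of Lemma~\ref{lem7.3}.
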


\begin{rmk}\label{rmk_freedom}
The freedom of the choice of $\underline{u}^{(k)}$ in Lemma \ref{l_zeta} is crucial in 
the study of historic behaviour of wandering domains for diffeomorphisms $C^1$-close to $f$.
\end{rmk}

\begin{proof}[Proof of Lemma \ref{l_zeta}]
Take $\gamma_1\in \{0,1\}$ with $\bar J_{k+1}^{\rm cs}\subsetneq \Im\,(\zeta_{\gamma_1})$, 
where $\Im(\zeta_{\gamma_1})$ is the image of $\zeta_{\gamma_1}$. 
Define 
\[\bar J_{k+1}^{\rm cs(1)}=\zeta_{\gamma_1}^{-1}(\bar J_{k+1}^{\rm cs}).\]
For any integer $i\geq 1$, 
 define the interval $\bar J_{k+1}^{{\rm cs}(i)}$ in $[0,1]$ inductively if 
$\bar J_{k+1}^{{\rm cs}(i-1)}\subset \Im\,(\zeta_{\gamma_{i}})$ holds for at least one $\gamma_i$ of $0,1$.
Suppose that this process finishes $m_k$ times.
Accordingly, $\bar J_{k+1}^{{\rm cs}(m_k-1)}$ is contained in $\Im\,(\zeta_{\gamma_{m_k}})$. 
We here 
note  that 
\[
\left| \bar B_{k+1}^{\rm u}\right|=
(\lambda_{u}^{-1})^{n_{0}+k+1+L(k+1)}
\]
and 
\[\pi_1\circ\varphi^2(
\{1/2\}\times \bar J_{k+1}^{\rm cs}
)=\bar B^{\rm u}_{k+1}.\] 
Then, by \eqref{eqn_1/2z}, 
\[
|\bar J_{k+1}^{\rm cs}|=|a_{2}^{-1}|(\lambda_{u}^{-1})^{n_{0}+k+1+L(k+1)}.
\]
Thus,  
there are integers $m_{k-1}^{(0)},m_{k-1}^{(1)}$ with 
$m_{k}-1=m_{k-1}^{(0)}+m_{k-1}^{(1)}$ such that 
\[
|\bar J_{k+1}^{{\rm cs}(m_k-1)}|=
\lambda_{cs0}^{-m_{k-1}^{(0)}}\lambda_{cs1}^{-m_{k-1}^{(1)}}
(|a_{2}^{-1}|(\lambda_{u}^{-1})^{n_{0}+k+1+L(k+1)})
\leq 1-\beta,
%\lambda_{cs}^{-m_k+1}a_2^{-1}\lambda_u^{-n_0-k-1}\leq L_{cs}-\beta.
\]
where $\lambda_{cs0}$ and  $\lambda_{cs1}$ are derivatives of $\zeta_{0}$ and $\zeta_{1}$, respectively, 
see \eqref{IFS}.
Since $\lambda_{cs0}<\lambda_{cs1}<1$, 
we have
\[m_k\leq 
\frac{\log |a_{2}|(1-\beta)\lambda_{u}^{n_{0}+1+L}}{\log \lambda_{cs0}^{-1}}+1
+\frac{\log\lambda_{u}^{1+L}}{\log \lambda_{cs0}^{-1}}k.
\]
Thus the smallest integers $N_0$ and $N_1$ with  
\[N_0\geq \frac{\log |a_{2}|(1-\beta)\lambda_{u}^{n_{0}+1+L}}{\log \lambda_{cs0}^{-1}}+1,\  
N_1\geq \dfrac{\log \lambda_u^{1+L}}{\log \lambda_{cs0}^{-1}}
\] 
fulfill the required condition on $m_k$.

From the definition of $m_k$, neither $\bar J_{k+1}^{{\rm cs}(m_k)}\subsetneq \Im\,(\zeta_0)=[0,\lambda_{cs0}]$ 
nor $\bar J_{k+1}^{{\rm cs}(m_k)}\subsetneq \Im\,(\zeta_1)=[\beta,1]$ occurs.
It follows that 
\[
\max\left\{\bar J_{k+1}^{{\rm cs}(m_k)}\right \}\geq \lambda_{cs0},\ 
\min\left\{\bar J_{k+1}^{{\rm cs}(m_k)}\right \}\leq \beta.
\]
So $\bar J_{k+1}^{{\rm cs}(m_k)}$ contains the interval $[\beta, \lambda_{cs0}]$.
In the case when $\boldsymbol{\zeta}_{\underline{w}^{(k+Lk)}\,\underline{u}^{(k)}}(1/2)\in [\beta, \lambda_{cs0}]$,  
we set $\underline{v}^{(k)}=\underline{u}^{(k)}$.
When $\boldsymbol{\zeta}_{\underline{w}^{(k+Lk)}\underline{u}^{(k)}}(1/2)\in [0,\beta)$ 
(resp.\ $\boldsymbol{\zeta}_{\underline{w}^{(k+Lk)}\underline{u}^{(k)}}(1/2)\in (\lambda_{cs0},1]$\,), 
we set $\underline{v}^{(k)}=\underline{u}^{(k)}1$ (resp.\ $\underline{v}^{(k)}=\underline{u}^{(k)}0$).
Since $1/2<\lambda_{cs}<1$, we have in either case 
$\boldsymbol{\zeta}_{\underline{w}^{(k+Lk)}{\underline v}^{(k)}}(1/2)\in [\beta,\lambda_{cs0}]$.
Hence the code 
\[\underline{\widehat w}^{(k)}=\underline{w}^{(k+Lk)}\underline{v}^{(k)}\gamma_{m_k}\gamma_{m_k-1}\cdots \gamma_2\gamma_1
\] satisfies 
our desired conditions.
\end{proof}

\begin{prop}\label{lp_Prop_5_1}
For any integer $L>0$,  let $\widehat B_k^{\rm u}=B^{\rm u}(\widehat n_k,\underline{\widehat w}^{(k)})$  $(k=1,2,\ldots)$  
be the sub-bridges of 
$\bar B_{k}^{\rm u}
= B^{\rm u}(n_0+k+Lk, \underline{w}^{(k+Lk)})
$ 
 and 
$\widehat J_{k+1}^{\rm cs}$ the cs-interval associated with $\widehat B_{k+1}^{\rm u}$ 
given in Lemma \ref{l_zeta}.
Then 
there exists a $t_{k+1}\in \mathbb{R}$ such that 
\begin{itemize}
\item
$\varphi^{\widehat n_k}(\widehat x_k^{\rm u},1/2)
=(1/2, \widehat z_{k+1}^{\rm cs})-(0,a_{2}^{-1}t_{k+1})
$,  
where $\widehat x_k^{\rm u}$ and $\widehat z_{k+1}^{\rm cs}$ are the centres of 
$\widehat B_k^{\rm u}$ and $\widehat J_{k+1}^{\rm cs}$, respectively,
%\item 
%$
%(1/2, \widehat z_{k+1}^{\rm cs})-
%\varphi^{\widehat n_k}(\widehat x_k^{\rm u},1/2)=(0,a_{2}^{-1}t_{k+1})$,
%where $\widehat x_k^{\rm u}$ and $\widehat z_{k+1}^{\rm cs}$ are the centers of 
%$\widehat B_k^{\rm u}$ and $\widehat J_{k+1}^{\rm cs}$ respectively,
\item $|t_{k+1}|< 
\lambda_{u}^{-(n_{0}+k+1+L(k+1))}$.
\end{itemize}
\end{prop}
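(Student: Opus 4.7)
The plan is to compute $\varphi^{\widehat n_k}(\widehat x_k^{\rm u},1/2)$ directly from the skew-product structure \eqref{blender} of $f|_{\mathbb V_0\cup\mathbb V_1}$, then to define $t_{k+1}$ so that the displayed equation becomes a tautology, after which the required bound reduces to a diameter estimate on $\bar J_{k+1}^{\rm cs}$ supplied by Lemma \ref{l_zeta}(2) together with the scaling \eqref{J&B}.

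First I would verify and use the factorisation of $\varphi^{\widehat n_k}$. Since $\widehat x_k^{\rm u}\in\widehat B_k^{\rm u}=B^{\rm u}(\widehat n_k,\underline{\widehat w}^{(k)})$, the first $\widehat n_k$ iterates of any lift $(\widehat x_k^{\rm u},y,1/2)$ lie in $\mathbb V_0\cup\mathbb V_1$, where \eqref{blender} shows the $x$- and $z$-coordinates evolve independently of $y$; thus $\varphi^{\widehat n_k}$ is well defined at $(\widehat x_k^{\rm u},1/2)$. The $x$-factor is the 2D horseshoe map, whose restriction to $\widehat B_k^{\rm u}$ is an affine bijection onto $[0,1]$ with slope $\pm\lambda_u^{\widehat n_k}$, and any such affine bijection of intervals sends the midpoint to the midpoint, so $\widehat x_k^{\rm u}\mapsto 1/2$. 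The $z$-factor is the composition of the contractions $\zeta_0,\zeta_1$ of \eqref{IFS} according to the itinerary $\underline{\widehat w}^{(k)}$, hence $1/2\mapsto\boldsymbol{\zeta}_{\underline{\widehat w}^{(k)}}(1/2)$. Combining, I would record the key identity
\[
\varphi^{\widehat n_k}(\widehat x_k^{\rm u},1/2)=\bigl(1/2,\ \boldsymbol{\zeta}_{\underline{\widehat w}^{(k)}}(1/2)\bigr).
\]

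Then I would set $t_{k+1}:=a_2\bigl(\widehat z_{k+1}^{\rm cs}-\boldsymbol{\zeta}_{\underline{\widehat w}^{(k)}}(1/2)\bigr)$, which makes the first bullet true by construction. For the size of $t_{k+1}$, Lemma \ref{l_zeta}(2) supplies $\boldsymbol{\zeta}_{\underline{\widehat w}^{(k)}}(1/2)\in\bar J_{k+1}^{\rm cs}$; applying Lemma \ref{l_zeta} at level $k+1$ yields $\widehat B_{k+1}^{\rm u}\subset\bar B_{k+1}^{\rm u}$, hence $\widehat J_{k+1}^{\rm cs}\subset\bar J_{k+1}^{\rm cs}$ via \eqref{J&B}, and in particular $\widehat z_{k+1}^{\rm cs}\in\bar J_{k+1}^{\rm cs}$. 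Both points thus lie in the same interval of length $|a_2^{-1}|\lambda_u^{-(n_0+k+1+L(k+1))}$, and multiplying the resulting displacement by $a_2$ delivers $|t_{k+1}|\leq\lambda_u^{-(n_0+k+1+L(k+1))}$. The only mildly subtle point—the main obstacle, such as it is—is upgrading this to a strict inequality: Lemma \ref{l_zeta}(1) forces $m_{k+1}\geq 1$, so $\widehat B_{k+1}^{\rm u}$ is a \emph{proper} sub-bridge of $\bar B_{k+1}^{\rm u}$ and $\widehat J_{k+1}^{\rm cs}$ a proper sub-interval of $\bar J_{k+1}^{\rm cs}$; the midpoint $\widehat z_{k+1}^{\rm cs}$ is therefore strictly interior to $\bar J_{k+1}^{\rm cs}$ (at distance at least $|\widehat J_{k+1}^{\rm cs}|/2>0$ from each endpoint), forcing $|t_{k+1}|<\lambda_u^{-(n_0+k+1+L(k+1))}$.
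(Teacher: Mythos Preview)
Your proposal is correct and follows essentially the same route as the paper: compute $\varphi^{\widehat n_k}(\widehat x_k^{\rm u},1/2)=(1/2,\boldsymbol{\zeta}_{\underline{\widehat w}^{(k)}}(1/2))$ from the skew-product form, define $t_{k+1}=a_2(\widehat z_{k+1}^{\rm cs}-\boldsymbol{\zeta}_{\underline{\widehat w}^{(k)}}(1/2))$, and bound it using $\boldsymbol{\zeta}_{\underline{\widehat w}^{(k)}}(1/2),\,\widehat z_{k+1}^{\rm cs}\in\bar J_{k+1}^{\rm cs}$ together with $|\bar J_{k+1}^{\rm cs}|=a_2^{-1}|\bar B_{k+1}^{\rm u}|$. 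In fact you go slightly further than the paper, which stops at the weak inequality $|t_{k+1}|\leq\lambda_u^{-(n_0+k+1+L(k+1))}$; your observation that $\widehat z_{k+1}^{\rm cs}$ lies strictly in the interior of $\bar J_{k+1}^{\rm cs}$ (since $\widehat J_{k+1}^{\rm cs}$ is a proper sub-interval) correctly upgrades this to the strict inequality stated.
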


\begin{rmk}\label{B&hatB}
{\rm
Note that $\widehat B_k^{\rm u} \subset \bar B_{k}^{\rm u}\subset B_{k}^{\rm u}$,  and  $B_{k}^{\rm u}$ will be used to specify the domain  of a perturbation. 
On the other hand, $\widehat B_k^{\rm u}$ determined from $\bar B_{k}^{\rm u}$ controls the size of the perturbation, and it will be important in the proof of  Proposition \ref{pbn}
that its size, which is exactly $|t_{k+1}|$ above, can be much smaller than the size of $B_{k}^{\rm u}$ by taking a sufficiently large $L$.
}
\end{rmk}

\begin{proof}[Proof of Proposition \ref{lp_Prop_5_1}]
Since $|\underline{\widehat w}^{(k)}|=\widehat n_k$, we have $\pi_1\circ \varphi^{\widehat n_k}(\widehat x_k^{\rm u},1/2)=1/2$ 
and hence, by Lemma \ref{l_zeta}, 
there is the cs-interval
$\bar  J_{k}^{\rm cs}\subset J_{k}^{\rm cs}$ and 
\[\varphi^{\widehat n_k}(\widehat x_k^{\rm u},1/2)
=\left(1/2, \boldsymbol{\zeta}_{\underline{\widehat w}^{(k)}}(1/2) \right)
\in \{1/2\}\times \bar J_{k+1}^{\rm cs}=\bar I_{k+1}^{\rm cs}.\]
See Figure \ref{f_BkBk2}. 
%%%%%%%%%%%%%%%%%%%%%%%%%%%%%%%%%%%%%%%%%%%%%%%%%%%%%%%
\begin{figure}[hbtp]
\centering
\scalebox{0.5}{\includegraphics[clip]{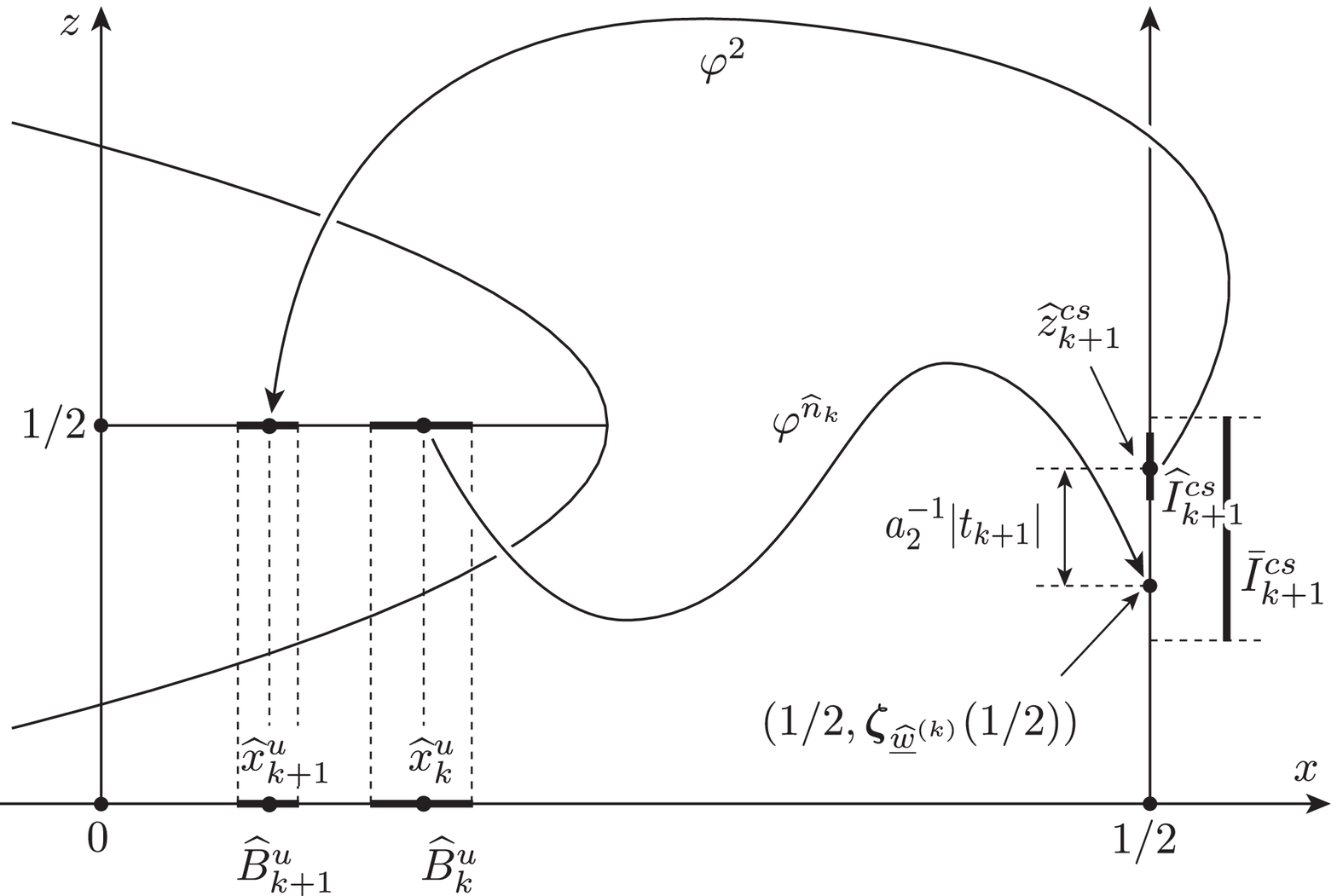}}
\caption{}
\label{f_BkBk2}
\end{figure}
%%%%%%%%%%%%%%%%%%%%%%%%%%%%%%%%%%%%%%%%%%%%%%%%%%%%%%%
Furthermore, 
 \[\widehat z_{k+1}^{\rm cs}\in \mathrm{Int} \widehat J_{k+1}^{\rm cs}\subset \bar J_{k+1}^{\rm cs}.\]
We here set 
\begin{equation}\label{t_{k+1}}
t_{k+1}=
a_{2}(
\widehat z_{k+1}^{\rm cs}-
\boldsymbol{\zeta}_{\underline{\widehat w}^{(k)}}(1/2)
).
\end{equation}
Since both $\boldsymbol{\zeta}_{\underline{\widehat w}^{(k)}}(1/2)$ and $\widehat z_{k+1}^{\rm cs}$ 
belong to $\bar J_{k+1}^{\rm cs}$, 
it follows from \eqref{J&B} that
\[|t_{k+1}|\leq a_{2}|\bar J_{k+1}^{\rm cs}|=
|\bar B_{k+1}^{\rm u}|=
\lambda_{u}^{-(n_{0}+k+1+L(k+1))}.
\qedhere
\]
\end{proof}

\section{Perturbations}\label{S.pertb}
%In the gaps contained in u-bridges in the previous section, 
We construct some map arbitrarily $C^{r}$-close to $f$ by countably many small perturbations
near homoclinic tangencies.

For any integer $k\geq 1$ let
$B_k^{\rm u}=B^{\rm u}(n_0+k,\underline{w}^{(k)})$ be the u-bridge  
and 
$J_{k}^{\rm cs}$  the cs-interval associated with $B_{k}^{\rm u}$ defined in the previous section. 
%We denote by $B_{k}^{\rm ss}$ the ss-bridge which has the same itinerary as that of $B_{k}^{\rm u}$.

\begin{prop}\label{pbn}
For any $\varepsilon>0$, there is 
a diffeomorphism 
$g$ which is contained in 
the $\varepsilon$-neighbourhood of $f$ in the $C^{r}$-topology $(1\leq r <\infty)$ and 
satisfies the following conditions:
\begin{itemize}
\item 
$g\vert_{U_{3\delta/2}^{c}}=f$ for any 
$0<\delta< \frac{1}{3}(1-2\lambda^{-1}_{u})$, where $U_{3\delta/2}$ is the $3\delta/2$-neighbourhood  of the 
$2$-dimensional disc $\{x=1/2\}\cap \mathbb{B}$ and $U_{3\delta/2}^{c}$ is the complement of 
$U_{3\delta/2}$ in $\mathbb{B}$.
\item For every  $k\geq 1$ and  
$(x,y,z)\in 
[\frac{1}{2}-\delta, \frac{1}{2}+\delta]\times
[0,1]
\times J_{k+1}^{\rm cs}$, 
\[
g^{2}(x,y,z)=(t_{k+1}, 0, 0)+f^{2}(x,y,z),
\]
where $t_{k+1}$ is the number given in Proposition \ref{lp_Prop_5_1}.
\end{itemize}
\end{prop}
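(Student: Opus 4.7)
The plan is to realise $g$ as $g(x,y,z)=f(x,y,z+s(x,z))$ on $U_{3\delta/2}$ for a $C^{r}$-small smooth function $s$, and to set $g=f$ outside $U_{3\delta/2}$. The motivation comes directly from \eqref{tang}: the first coordinate of $f^{2}(x,y,z)$ is affine in $z$ with slope $a_{2}$, while the other two coordinates do not involve $z$, so shifting $z$ by $t_{k+1}/a_{2}$ before applying $f^{2}$ produces exactly the translation $(t_{k+1},0,0)$ in $f^{2}$. Moreover the second application of $g$ automatically reduces to $f$: the hypothesis $\delta<\frac{1}{3}(1-2\lambda_{u}^{-1})$ forces $U_{3\delta/2}\subset \mathbb{G}$, and hence $f(U_{3\delta/2})\subset \mathbb{R}^{3}\setminus \mathbb{B}\subset U_{3\delta/2}^{c}$, where $g=f$.

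Concretely, for each $k\geq 1$ choose a $C^{\infty}$ bump $\chi_{k}\colon[0,1]\to[0,1]$ equal to $1$ on $J_{k+1}^{\mathrm{cs}}$ and supported in a slight enlargement of length at most a fixed constant multiple of $|J_{k+1}^{\mathrm{cs}}|$. The enlargements can be taken pairwise disjoint: by the horseshoe geometry combined with \eqref{J&B}, the u-gap of $\widetilde B_{k}^{\mathrm{u}}$ separating $B_{k+1}^{\mathrm{u}}$ from its sibling $\widetilde B_{k+1}^{\mathrm{u}}$ has length $(\lambda_{u}-2)|B_{k+1}^{\mathrm{u}}|$, and every other $B_{j+1}^{\mathrm{u}}$ with $j\neq k$ lies at distance at least this amount from $B_{k+1}^{\mathrm{u}}$. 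Let $\rho(x)$ be a fixed $C^{\infty}$ bump equal to $1$ on $[\frac{1}{2}-\delta,\frac{1}{2}+\delta]$ and supported in $[\frac{1}{2}-\frac{3\delta}{2},\frac{1}{2}+\frac{3\delta}{2}]$, and put
\[
s(x,z)=\rho(x)\sum_{k\geq 1}\frac{t_{k+1}}{a_{2}}\chi_{k}(z).
\]
Define $g=f$ on $U_{3\delta/2}^{c}$ and $g(x,y,z)=f(x,y,z+s(x,z))$ on $U_{3\delta/2}$; the two definitions match on $\partial U_{3\delta/2}$ since $s$ vanishes there. The first bullet holds by construction. For the second, if $(x,y,z)\in[\frac{1}{2}-\delta,\frac{1}{2}+\delta]\times[0,1]\times J_{k+1}^{\mathrm{cs}}$ then $s(x,z)=t_{k+1}/a_{2}$, so $g(x,y,z)=f(x,y,z+t_{k+1}/a_{2})\in f(\mathbb{G})\subset U_{3\delta/2}^{c}$, and applying $g=f$ to this point gives
\[
g^{2}(x,y,z)=f^{2}(x,y,z+t_{k+1}/a_{2})=f^{2}(x,y,z)+(t_{k+1},0,0)
\]
by \eqref{tang}.

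The main obstacle, and the only nontrivial step, is the $C^{r}$ estimate on $s$, which is precisely where the parameter $L$ of Lemma \ref{l_zeta} is exploited. Standard bump-function estimates combined with Proposition \ref{lp_Prop_5_1} and \eqref{J&B} yield
\[
\Bigl\|\tfrac{t_{k+1}}{a_{2}}\chi_{k}\Bigr\|_{C^{r}}\leq C_{r}\,|t_{k+1}|\,|J_{k+1}^{\mathrm{cs}}|^{-r}\leq C'_{r}\,\lambda_{u}^{(r-1)(n_{0}+k+1)-L(k+1)}.
\]
Choosing $L>r-1$ turns the right-hand side into a convergent geometric series in $k$, and enlarging $L$ further one makes $\|s\|_{C^{r}}$, and in turn $\|g-f\|_{C^{r}}$, smaller than the prescribed $\varepsilon$. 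Since $s$ is then $C^{1}$-small, the map $(x,y,z)\mapsto(x,y,z+s(x,z))$ is a diffeomorphism, and therefore so is $g$.
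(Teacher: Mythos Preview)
Your proof is correct and follows essentially the same route as the paper: the paper also sets $g=f\circ h$ with $h(x,y,z)=(x,y,z+a_{2}^{-1}b_{u}(x)\sum_{k}t_{k+1}b_{ss}(y)b_{cs,k+1}(z))$, establishes the $C^{r}$ bound via the same quantity $\sum_{k}|t_{k+1}|/|J_{k+1}^{\rm cs}|^{r}$, and makes it small by choosing $L$ large. Your presentation is slightly more streamlined (you drop the redundant $y$-bump and argue the disjointness of the $z$-supports explicitly), but the argument is the same.
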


\begin{proof}
The idea of the proof is already described in Remark \ref{B&hatB}. 
Here we prove it in practice.

Let $\boldsymbol{b} :\mathbb{R}\longrightarrow \mathbb{R}$ be  
a non-negative, non-decreasing $C^{r}$ function 
such that $\boldsymbol{b} (x)=0$ if $x\leq -1$ while $\boldsymbol{b}(x)=1$ if $x\geq 0$. 
Using $\boldsymbol{b} $, we consider the bump function $\boldsymbol{b} _{\rho, I}$ with 
$\boldsymbol{b} _{\rho, I}=1$ on $I$ as follows: 
\[
\boldsymbol{b} _{\rho, I}(x)=\boldsymbol{b} \biggl(\frac{x-a}{\rho|I|}\biggr)+\boldsymbol{b} \biggl(-\frac{x-b}{\rho|I|}\biggr)-1,
\]
where $\rho$ is a positive constant and $I$ is the interval $[a,b]$ with $a<b$. 
The function satisfies 
\[
\left\|
\boldsymbol{b} _{\rho, I}
\right\|_{C^{r}}\leq 
%\sum_{i=0}^{r}
\frac{1}{(\rho|I|)^{r}} 
\|
\boldsymbol{b} 
\|_{C^{r}},
\]
where $\|\cdot\|_{C^{r}}$ is the supremum norm of the derivatives of corresponding maps.
Next
we set 
\[
b_{u}=\boldsymbol{b} _{\frac{1}{4}, [\frac{1}{2}-\delta, \frac{1}{2}+\delta]},\ 
b_{ss}=\boldsymbol{b} _{\frac{1}{4}, [0,1]},\ 
b_{cs,k}=\boldsymbol{b} _{\frac{1}{3\tau_{cs}}, J^{\rm cs}_{k}}, 
\]
where
%$\tau_{ss}=\lambda_{ss}/(1-2\lambda_{ss})$, 
$\tau_{cs}=\lambda_{u}^{-1}/(1-2\lambda_{u}^{-1})$, which is independent of $k$.

For every $k\geq 1$, 
let $t_{k+1}$ be the constant given in Proposition \ref{lp_Prop_5_1}, 
the absolute value of which has an 
upper bound depending on a given $L>0$. 
We write
\[\underline{t}(L)=(t_{2},\ldots, t_{k+1},\ldots),\]
and 
define the perturbation map 
$h_{\underline{t}(L)}:\mathbb{R}^{3}\longrightarrow \mathbb{R}^{3}$
 as 
\begin{equation}\label{def-h}
h_{\underline{t}(L)}(x,y,z)=
\Bigr(x,\ y,\ z+a_{2}^{-1}b _{u}(x)\sum_{k=1}^{\infty} t_{k+1}b _{ss}(y)b _{cs,k+1}(z) \Bigr).
\end{equation}
Then we have
\begin{multline}
\left\|h_{\underline{t}(L)} -id\right\|_{C^{r}}
=
\left\|
a_{2}^{-1}b _{u}(x)\sum_{k=1}^{\infty} t_{k+1}b _{ss}(y)b _{cs,k+1}(z)
\right\|_{C^{r}}
\notag
\\
<|a_{2}|^{-1}
\left(\frac{18\tau_{ss}\tau_{cs}}{\delta}\right)^{r}
\|
\boldsymbol{b} 
\|_{C^{r}}
\sum_{k=1}^{\infty}\frac{|t_{k+1}|}{|J_{k+1}^{\rm cs}|^{r}}.
\end{multline}
Moreover, 
it follows from  \eqref{J&B} and Proposition \ref{lp_Prop_5_1} that 
\begin{multline}\label{key-r}
\sum_{k=1}^{\infty}\frac{|t_{k+1}|}{|J_{k+1}^{\rm cs}|^{r}}
\leq 
\sum_{k=1}^{\infty}
\frac{\lambda_{u}^{-(n_{0}+k+1+L(k+1))}}{
(|a_2^{-1}| \lambda_{u}^{-n_{0}-k-1})^{r}}
%\notag
\\
=\frac{|a_2^{r}|\lambda_{u}^{(n_{0}+1)(r-1)}}{ \lambda_{u}^{L}}
\sum_{k=1}^{\infty}
\left(
\frac{\lambda_{u}^{r}}{\lambda_{u}^{1+L}}
\right)^{k}
=\frac{|a_2^{r}|\lambda_{u}^{(n_{0}+1)(r-1)+r}}
{\lambda_{u}^{L}(\lambda_{u}^{1+L}-\lambda_{u}^{r})}.
\end{multline}
 Consequently, 
$h_{\underline{t}(L)}$ can be taken arbitrarily $C^{r}$-close to the identity map 
if $L$ is sufficiently large as long as $r$ is fixed.
%On the other hand, the above evaluations would not be valid if $r$ is infinite.

By using the perturbation map, we define 
 \begin{equation}\label{def-g}
 g=f\circ h_{\underline{t}(L)},
 \end{equation}
 which is arbitrarily $C^{1}$-close to $f$ 
 if  $L$ is large.
 
We first note that  $[\frac{1}{2}-\delta, \frac{1}{2}+\delta]\times
[0,1]\times J_{k+1}^{\rm cs}\subset U_{3\delta/2}$ for every $k\geq 1$ 
and $h_{\underline{t}(L)}|_{U_{3\delta/2}^{c}}$ is equal to the identity.
That is, $g\vert_{U_{3\delta/2}^{c}}=f$.
On the other hand, 
for any 
$(x,y,z)\in [\frac{1}{2}-\delta, \frac{1}{2}+\delta]\times
[0,1]\times J_{k+1}^{\rm cs}$, we have 
\[
g(x,y,z)=
f\circ h_{\underline{t}(L)}(x,y,z)=f(x,y,z+a_{2}^{-1} t_{k+1})\in U_{3\delta/2}^{c}.
\]
Since
$h_{\underline{t}(L)}\vert_{U_{3\delta/2}^{c}}=id$,  it follows from  \eqref{tang} that 
\[
g^{2}(x,y,z)=f^{2}(x,y,z+a_{2}^{-1} t_{k+1})=(t_{k+1}, 0, 0)+f^{2}(x,y,z).
\]
This ends the proof.
\end{proof}

\begin{rmk}
As $r=\infty$, the evaluation  \eqref{key-r} is useless. 
Hence the regularity condition in Proposition \ref{pbn} does not reach infinity.
\end{rmk}

\section{Contracting wandering domains}\label{s7}
\subsection{Two conditions in freedom term}\label{ss7.1}
From the results for $\widehat{B}^{\rm u}_{k}=B^{\rm u}(\widehat n_{k},\underline{\widehat w}^{(k)})$ obtained in Lemma \ref{l_zeta}, one can make some further conditions. 
Since  
$\underline{\widehat w}^{(k)}=\underline{w}^{(k+Lk)}\underline{v}^{(k)}\underline{\gamma}^{(k)}$, 
we have
\[
\left|\underline{w}^{(k+Lk)}\right|=n_{0}+k+Lk=O(k),\ 
\left|\underline{\gamma}^{(k)}\right|=m_{k}=O(k).
\]
Also, as in Remark \ref{rmk_freedom}, 
the sub-code $\underline{u}^{(k)}$ of 
$\underline{v}^{(k)}$ can be chosen freely.
Thus,  
we may assume that 
the length of $\underline{v}^{(k)}$ is quadratic for $k$ such that 
\begin{subequations} 
\begin{equation}\label{k2}
 \left|\underline{v}^{(k)}\right|=k^{2},
\end{equation}
which is called the \emph{quadratic condition}. 
Note that the same condition was already used in \cite{CV01}.
It follows from \eqref{k2} that  
\[
\frac{\widehat n_{k+1}}{\widehat n_{k}}
=\frac{(k+1)^{2}+O(k+1)}{k^{2}+O(k)}\to 1\ \text{as $k\to+\infty$}.
\]
That is, 
we have 
the subexponential growth in  
generations of $\widehat{B}^{\rm u}_{k}$ $(k=1,2,\ldots)$
as follows: 
\begin{lem}\label{subexp}
For any $\eta>0$, there is an integer $k_{0}>0$ such that for any integer $k\geq k_{0}$, 
\[
\widehat n_{k}<\widehat n_{k+1}<(1+\eta) \widehat n_{k}. \eqno\qed
\]
\end{lem}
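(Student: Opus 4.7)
The plan is to compute $\widehat n_k$ explicitly from Lemma~\ref{l_zeta}(1) and observe that it grows like $k^2$, so that both inequalities are reduced to the asymptotic comparison of $(k+1)^2$ with $k^2$.

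First I would unpack $\widehat n_k = |\underline{\widehat w}^{(k)}|$ via the decomposition $\underline{\widehat w}^{(k)} = \underline w^{(k+Lk)} \underline v^{(k)} \underline \gamma^{(k)}$ given in Lemma~\ref{l_zeta}(1). The first factor has length $n_0 + (L+1)k$, being the generation of $\bar B_k^{\rm u}$; the middle factor has length $k^2$ by the quadratic condition~\eqref{k2} (exercising the freedom in the choice of $\underline u^{(k)}$); and the last factor has length $m_k$ with $0 < m_k \leq N_0 + N_1 k$. Summing,
\[
\widehat n_k = k^2 + (L+1)k + n_0 + m_k = k^2 + O(k).
\]

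The two inequalities of the lemma follow from this asymptotic. For the upper bound,
\[
\frac{\widehat n_{k+1}}{\widehat n_k} = \frac{(k+1)^2 + O(k+1)}{k^2 + O(k)} \longrightarrow 1 \quad \text{as } k \to \infty,
\]
so given $\eta > 0$ one picks $k_0$ large enough that $\widehat n_{k+1}/\widehat n_k < 1 + \eta$ for all $k \geq k_0$. For monotonicity, the increment is
\[
\widehat n_{k+1} - \widehat n_k = (2k+1) + (L+1) + (m_{k+1} - m_k),
\]
and since $\widehat n_k$ is dominated by the $k^2$ term, the linear growth $2k$ outweighs the $O(k)$ fluctuation of the correction $m_k$; after enlarging $k_0$ one obtains $\widehat n_{k+1} > \widehat n_k$.

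The main technical obstacle is securing strict monotonicity, because the crude bound $0 < m_k \leq N_0 + N_1 k$ does not by itself rule out $m_{k+1} - m_k$ being as negative as $-(N_0 + N_1 k - 1)$, which for large $N_1$ could overwhelm the $(2k+1)+(L+1)$ contribution. I would handle this by exploiting the freedom noted in Remark~\ref{rmk_freedom} in the choice of $\underline u^{(k)}$ (and if needed by adjoining a regularity condition on $\{m_k\}$ analogous to the quadratic condition) in order to stabilise the linear coefficient of $\widehat n_k$, ensuring that $\widehat n_{k+1}- \widehat n_k \geq 1$ for all $k\geq k_0$.
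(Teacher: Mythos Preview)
Your approach coincides with the paper's: after imposing the quadratic condition~\eqref{k2}, the paper writes $\widehat n_{k+1}/\widehat n_k = \bigl((k+1)^2 + O(k+1)\bigr)/\bigl(k^2 + O(k)\bigr) \to 1$ and then states the lemma with a \qed, giving no further argument. You are in fact more careful than the paper in flagging the strict inequality $\widehat n_k < \widehat n_{k+1}$ as requiring attention; the paper does not discuss the possible fluctuation of $m_k$ at all and simply treats both inequalities as immediate consequences of the $k^2 + O(k)$ asymptotic.
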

\smallskip

In addition to \eqref{k2}, we have to make another condition on  
$\widehat{B}^{\rm u}_{k}=B^{\rm u}(\widehat n_{k},\underline{\widehat w}^{(k)})$.
From the freedom of the choice of $\underline{u}^{(k)}$ again, 
one can assume that the total number $\widehat n_{k(0)}$ of zeros 
in $\underline{\widehat w}^{(k)}$ is greater than or equal to 
the number $\widehat n_{k(1)}$ of ones, that is, 
\begin{equation}\label{dom}
\widehat n_{k(1)}\leq \widehat n_{k(0)},
\end{equation}
which is called the \emph{majority condition}.
\end{subequations}
\begin{rmk}
Both \eqref{k2} and \eqref{dom} are indispensable to show Lemma \ref{lem7.3} 
which is a key to  
Theorem \ref{thmWD}. 
On the other hand, 
\eqref{dom} may be an obstacle to realise some type of dynamics of wandering domain. 
See also Remark \ref{obs}.
\end{rmk}

In order to see the region in the code occupied by each symbol, 
we  sometimes denote $\widehat n_{k(0)}$
and $\widehat n_{k(1)}$ by  
$\left|\underline{\widehat w}^{(k)}\right|_{(0)}$ and $\left|\underline{\widehat w}^{(k)}\right|_{(1)}$, respectively. 
So we have  
\begin{equation}\label{dom2}
\widehat n_{k}=\widehat n_{k(0)}+\widehat n_{k(1)}=\left|\underline{\widehat w}^{(k)}\right|_{(0)}+\left|\underline{\widehat w}^{(k)}\right|_{(1)}=\left|\underline{\widehat w}^{(k)}\right|.
\end{equation}

\subsection{Identifying of wandering domains}
In the same way as in \eqref{varphi},  the following similar notations are useful here.
For any $(x,y,z)\in \mathbb{B}$ 
and integer $n>0$, we write
\[
\psi^n(x,z)=\widehat\pi(g^n(x,y,z)),\ 
\widetilde\psi^n(y)=\pi_{2}(g^n(x,y,z)) 
\]
if the value of the right-hand side of the former (resp.\ latter) equation does not depend on $y$ 
(resp.\ on $x$ or $z$),
where $\widehat\pi$ is the projection as in \eqref{varphi} and 
$\pi_{2}:\mathbb{B}\longrightarrow \mathbb{R}$ is the projection defined by $\pi_{2}(x,y,z)=y$.

To show the existence of our desired wondering domain, we have to prepare some notations. 
The first one is the following. 
For every integer $k\geq k_{0}$, we set
\[
b_{k}=a_{1}^{-1}\lambda_{u}^{-\sum_{i=0}^{\infty}\widehat n_{k+i}/2^{i}}.
\]
It implies that
\begin{equation}\label{b_k^2}
a_1\lambda_{u}^{2\widehat n_{k}}b_{k}^{2}=b_{k+1}, 
\end{equation}
which will be useful for some  evaluations later.
The next one is the following. 
Let  
$Y_{k_{0}}=[\lambda_{ss}, 1-\lambda_{ss}]$ and, 
for each integer $k> k_{0}$, 
\begin{equation}\label{Y_k}
Y_{k}=
\widetilde \psi^{\widehat n_{k-1}+2}\circ
\widetilde \psi^{\widehat n_{k-2}+2}\circ\ldots
 \circ
 \widetilde \psi^{\widehat n_{k_{0}}+2}(Y_{k_{0}}).
\end{equation}
Using these items,
for each integer $k\geq k_{0}$, 
we define
\[
\mathbb{W}_{k}=
\Bigl[\widehat x_k^{\rm u}-\frac{b_{k}}{2}, \widehat x_k^{\rm u}+\frac{b_{k}}{2}\Bigr]
\times
Y_{k}
\times
\Bigl[\frac{1}{2}-z_{k}^{\ast}, \frac{1}{2}+z_{k}^{\ast}\Bigr],
\]
where 
 $\widehat x_k^{\rm u}$ be the centre point of $\widehat B_k^{\rm u}=B^{\rm u}(\widehat n_k,\underline{\widehat w}^{(k)})$  $(k=1,2,\ldots)$ given in Proposition \ref{lp_Prop_5_1}, and 
\begin{equation} \label{z_k}
z_{k}^{\ast}=20a_{1}^{-1/2}a_{4}b_{k}^{1/2}.
\end{equation}
From the above definition of $\mathbb{W}_{k}$, 
we can immediately see that
\begin{prop}\label{proj-w}
For every $k\geq k_{0}$, 
\[
\pi_{1}\circ \widehat{\pi}(\mathbb{W}_{k})\subset 
\widehat G_k^{\rm u}=G^{\rm u}(\widehat n_k,\underline{\widehat w}^{(k)}).
\]
\end{prop}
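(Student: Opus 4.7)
The plan is to unwind the definitions of $\mathbb{W}_k$ and $\widehat G_k^{\rm u}$ and reduce the claim to a single numerical inequality on $b_k$. Since $\mathbb{W}_k$ is defined as a product set, applying $\pi_1\circ\widehat\pi$ simply extracts the $x$-factor:
\[
\pi_1\circ\widehat\pi(\mathbb{W}_k) = \left[\widehat x_k^{\rm u} - \tfrac{b_k}{2},\ \widehat x_k^{\rm u} + \tfrac{b_k}{2}\right].
\]

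Next I would make the geometry of $\widehat G_k^{\rm u}$ explicit from the affine horseshoe \eqref{blender}. With $\lambda_u > 2$, each u-bridge $B^{\rm u}(n,\underline w)$ is an interval of length $\lambda_u^{-n}$ in the $x$-coordinate, and its two daughter sub-bridges $B^{\rm u}(n+1,\underline w\,0)$ and $B^{\rm u}(n+1,\underline w\,1)$ (each of length $\lambda_u^{-(n+1)}$) sit at the two extreme ends of $B^{\rm u}(n,\underline w)$; this is immediate by induction from the defining maps $x\mapsto \lambda_u x$ on $\mathbb V_0$ and $x\mapsto \lambda_u(1-x)$ on $\mathbb V_1$. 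Consequently $G^{\rm u}(n,\underline w)$ is the central open subinterval of $B^{\rm u}(n,\underline w)$ of length $\lambda_u^{-n}(1 - 2\lambda_u^{-1})$, symmetric about the midpoint of $B^{\rm u}(n,\underline w)$. Since $\widehat x_k^{\rm u}$ is by definition the centre of $\widehat B_k^{\rm u}$, this gives
\[
\widehat G_k^{\rm u} = \left(\widehat x_k^{\rm u} - \tfrac{1}{2}\lambda_u^{-\widehat n_k}(1 - 2\lambda_u^{-1}),\ \widehat x_k^{\rm u} + \tfrac{1}{2}\lambda_u^{-\widehat n_k}(1 - 2\lambda_u^{-1})\right).
\]

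Because both intervals are centred at $\widehat x_k^{\rm u}$, the required inclusion is equivalent to the strict inequality $b_k < \lambda_u^{-\widehat n_k}(1 - 2\lambda_u^{-1})$. I would verify this in two steps: first, because $\sum_{i=0}^\infty \widehat n_{k+i}/2^i = \widehat n_k + \sum_{i=1}^\infty \widehat n_{k+i}/2^i > \widehat n_k$ (all $\widehat n_{k+i}$ are positive) and $\lambda_u > 1$, the explicit formula for $b_k$ yields $b_k < a_1^{-1}\lambda_u^{-\widehat n_k}$; second, the first clause of \eqref{a_i} gives $a_1 > (1 - 2\lambda_u^{-1})^{-1}$, so $a_1^{-1} < 1 - 2\lambda_u^{-1}$. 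Composing the two strict bounds yields the desired inequality.

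There is no substantive obstacle here: the proposition is essentially a bookkeeping observation recording precisely why the lower bound on $a_1$ in \eqref{a_i} was imposed in the first place. The only geometric input that needs mention is the horseshoe symmetry which places $\widehat G_k^{\rm u}$ at the centre of $\widehat B_k^{\rm u}$, and this is immediate from the affine formulas of $f$ on $\mathbb V_0\cup\mathbb V_1$.
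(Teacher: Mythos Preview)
Your proposal is correct and follows essentially the same approach as the paper: both reduce the inclusion to the length comparison $b_k < |\widehat G_k^{\rm u}| = (1-2\lambda_u^{-1})\lambda_u^{-\widehat n_k}$, using that the gap and the $x$-factor of $\mathbb W_k$ share the common centre $\widehat x_k^{\rm u}$, and then invoke the bound $a_1 > (1-2\lambda_u^{-1})^{-1}$ from \eqref{a_i} together with the trivial observation $\sum_{i\geq 0}\widehat n_{k+i}/2^i > \widehat n_k$. Your write-up is in fact slightly cleaner than the paper's, which routes the same estimate through an unnecessary intermediate bound.
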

\begin{proof}
By \eqref{ev}, \eqref{a_i} and $n_{k}<n_{k+1}$, we have
\[
(\lambda_{u}^{-1})^{\sum_{i=1}^{\infty}\widehat n_{k+i}/2^{i}}<\lambda_{u}^{-2n_{k}}
<a_{1}(1-2\lambda_{u}^{-1})\lambda_{u}^{-n_{k}},
\]
and hence 
\[
b_{k}=a_{1}^{-1}\lambda_{u}^{-\sum_{i=0}^{\infty}\widehat n_{k+i}/2^{i}}
<(1-2\lambda_{u}^{-1})(\lambda_{u}^{-1})^{\widehat n_{k}}=|\widehat G_k^{\rm u}|.
\]
Moreover, since the centre point $\widehat x_k^{\rm u}$ of $\widehat B_k^{\rm u}$ 
is identical to that of $\widehat G_k^{\rm u}$, 
the claim of this proposition has been shown.
\end{proof}
Moreover
for the diffeomorphism  
$g$ given in Proposition \ref{pbn}, 
the following result is obtained:

\begin{thm}\label{thmWD}
There is an integer $k_{1}\geq k_{0}$ such that, 
for every integer $k\geq k_{1}$, 
$\mathbb{D}_{k}=\Int (\mathbb{W}_{k})$ is a 
contracting wandering domain for $g$ 
satisfying
\[
g^{\widehat n_{k}+2}(\mathbb{D}_{k})\subset \mathbb{D}_{k+1}.
\]
\end{thm}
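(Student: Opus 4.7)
The plan is to verify the inclusion $g^{\widehat n_k+2}(\mathbb{W}_k)\subset\mathbb{W}_{k+1}$ for all sufficiently large $k$, from which the wandering and contracting properties for $\mathbb{D}_k=\mathrm{Int}\,\mathbb{W}_k$ will follow by iteration. I would factor $g^{\widehat n_k+2}=g^{2}\circ g^{\widehat n_k}$. By Proposition \ref{proj-w}, $\pi_1\circ\widehat\pi(\mathbb{W}_k)\subset\widehat G_k^{\mathrm{u}}\subset\widehat B_k^{\mathrm{u}}$, so the first $\widehat n_k$ forward iterates of $\mathbb{W}_k$ follow the itinerary $\underline{\widehat w}^{(k)}$ through $\mathbb{V}_0\cup\mathbb{V}_1$ and remain outside the perturbation neighbourhood $U_{3\delta/2}$; hence $g^{\widehat n_k}|_{\mathbb{W}_k}=f^{\widehat n_k}|_{\mathbb{W}_k}$ is the affine skew product of \eqref{blender}. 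Once $g^{\widehat n_k}(\mathbb{W}_k)$ is seen to sit inside $[\tfrac12-\delta,\tfrac12+\delta]\times[0,1]\times J_{k+1}^{\mathrm{cs}}$, Proposition \ref{pbn} renders the next two iterates as $(t_{k+1},0,0)+f^2$ with $f^2$ given by \eqref{tang}.

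A coordinate-wise computation then yields the inclusion. Under $g^{\widehat n_k}$, the $x$-direction is expanded affinely by $\lambda_u^{\widehat n_k}$ about $\widehat x_k^{\mathrm{u}}$, whose image is $1/2$ by Proposition \ref{lp_Prop_5_1}, giving an interval of width $\lambda_u^{\widehat n_k}b_k$ centred at $1/2$; the $z$-direction is $\boldsymbol{\zeta}_{\underline{\widehat w}^{(k)}}$ whose slope $\lambda_{cs0}^{\widehat n_{k(0)}}\lambda_{cs1}^{\widehat n_{k(1)}}$, by the majority condition \eqref{dom} (reducing to the balanced case) and the strict partial dissipation \eqref{pdc}, is at most $\lambda_u^{-(1+\delta_0)\widehat n_k}$ for a fixed $\delta_0>0$, so the $z$-image is centred at $\widehat z_{k+1}^{\mathrm{cs}}-a_2^{-1}t_{k+1}\in \widehat J_{k+1}^{\mathrm{cs}}\subset J_{k+1}^{\mathrm{cs}}$ with half-width $\leq\lambda_u^{-(1+\delta_0)\widehat n_k}z_k^{\ast}$; the $y$-direction stays in $[0,1]$ by \eqref{Y_k}. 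For the $g^2$-step, the identity $a_2\widehat z_{k+1}^{\mathrm{cs}}=\widehat x_{k+1}^{\mathrm{u}}$ (i.e.\ $\widehat J_{k+1}^{\mathrm{cs}}=a_2^{-1}\widehat B_{k+1}^{\mathrm{u}}$, obtained exactly as \eqref{J&B}) lets $t_{k+1}$ precisely absorb the $-t_{k+1}$ hidden in $a_2Z$ and recentres the $x$-image at $\widehat x_{k+1}^{\mathrm{u}}$. The quadratic part $-a_1(X-\tfrac12)^2$ contributes a range of length $b_{k+1}/4$ via \eqref{b_k^2}, and the linear part $a_2(Z-\widehat z_{k+1}^{\mathrm{cs}}+a_2^{-1}t_{k+1})$ has absolute value at most $a_2\lambda_u^{-(1+\delta_0)\widehat n_k}z_k^{\ast}=o(b_{k+1})$ thanks to the quadratic growth \eqref{k2} of $\widehat n_k$. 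The $z$-component $a_4(X-\tfrac12)+\tfrac12$ lies in $[\tfrac12-z_{k+1}^{\ast}/40,\tfrac12+z_{k+1}^{\ast}/40]$ by \eqref{b_k^2} and \eqref{z_k}, and the $y$-component lies in $Y_{k+1}$ by construction. Hence $g^{\widehat n_k+2}(\mathbb{W}_k)\subset\mathbb{W}_{k+1}$.

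Iterating gives $g^{N_{k,j}}(\mathbb{D}_k)\subset\mathbb{D}_{k+j}$ with $N_{k,j}=\sum_{\ell=0}^{j-1}(\widehat n_{k+\ell}+2)$. Since $b_{k+j},z_{k+j}^{\ast}\to 0$ and $|Y_{k+j}|\leq|a_3|^{j}|Y_k|\to 0$, the diameter of $g^{N_{k,j}}(\mathbb{D}_k)$ tends to zero; uniform horseshoe contraction along the intermediate iterates propagates this to the full forward orbit, giving the contracting property. For the wandering property, the intermediate iterates $g^i(\mathbb{D}_k)$ with $0<i<\widehat n_k$ lie in $\mathbb{V}_{w_{i+1}}\subset \mathbb{V}_0\cup\mathbb{V}_1$, while the later landings $\mathbb{D}_{k+j}$ project in $x$ into $\widehat G_{k+j}^{\mathrm{u}}$ inside u-bridges of strictly greater generation $\widehat n_{k+j}>\widehat n_k$; the bridge/gap disjointness together with the strict monotonicity of $\widehat n_k$ (Lemma \ref{subexp}) forces mutual disjointness of every pair of iterates. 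The main obstacle is exactly the estimate $a_2\lambda_u^{-(1+\delta_0)\widehat n_k}z_k^{\ast}=o(b_{k+1})$: since $z_k^{\ast}\sim b_k^{1/2}$ and $b_{k+1}\sim b_k^2$, neither \eqref{dom} nor \eqref{pdc} alone is enough, and it is essential that \eqref{k2} makes $\widehat n_k$ grow like $k^2$, so that the exponential gain $\lambda_u^{-\delta_0 \widehat n_k}$ dominates the polynomial-in-$k$ discrepancy between $\widehat n_k$ and $\tfrac12\sum_{i\ge0}\widehat n_{k+i}/2^i$ in the exponent produced by \eqref{b_k^2}. This confluence of \eqref{ev}, \eqref{pdc}, \eqref{dom}, and \eqref{k2} is the asymmetric mechanism highlighted in Remark \ref{rmk2}.
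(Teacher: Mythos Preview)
Your proposal is correct and follows essentially the same route as the paper. The paper's proof simply invokes Propositions~\ref{x-width}, \ref{z-width}, \ref{y-width} (which in turn rest on Lemmas~\ref{lem7.3} and~\ref{lem7.4}); you have reproduced the content of those five results in a single compressed pass, correctly identifying the estimate $a_2\lambda_{cs0}^{\widehat n_{k(0)}}\lambda_{cs1}^{\widehat n_{k(1)}}z_k^{\ast}=o(b_{k+1})$ (Lemma~\ref{lem7.3}) as the crux, and your coordinate-wise computation is exactly Lemma~\ref{lem7.4}. Your final paragraph phrases the role of \eqref{k2} as ``polynomial-in-$k$ discrepancy'' rather than via the ratio $\widehat n_{k+1}/\widehat n_k\to 1$ of Lemma~\ref{subexp}, but since $\widehat n_k=k^2+O(k)$ these are equivalent. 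You are also slightly more explicit than the paper about the wandering and full contracting properties, which the paper's proof of Theorem~\ref{thmWD} does not spell out.
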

The proof of this theorem 
can be obtained immediately from Propositions \ref{x-width}, \ref{z-width} and \ref{y-width}.
To show them, we need two technical lemmas as follows. 
As mentioned in Remark \ref{rmk2}, 
this is the place where the partially dissipative condition \eqref{pdc} 
comes into play.
\begin{lem}\label{lem7.3}
\[
\lim_{k\to+\infty}
\left|
\frac{a_{2} \lambda_{cs0}^{\widehat n_{k(0)}}\lambda_{cs1}^{\widehat n_{k(1)}}z_{k}^{\ast}}{b_{k+1}/2}
\right|=0.
\]
%
%For any $\varepsilon>0$ 
%there is $k_{1}\geq k_{0}$ such that for every 
%integer $k>k_{1}$,
%\[
%\left|
%\frac{a_{2} \lambda_{cs0}^{\widehat n_{k(0)}}\lambda_{cs1}^{\widehat n_{k(1)}}z_{k}^{\ast}}{b_{k+1}/2}
%\right|<\varepsilon.
%\]
\end{lem}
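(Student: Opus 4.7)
The plan is to take the logarithm of the expression in the lemma and show that it tends to $-\infty$, with the decisive negative growth coming from a combination of the partial dissipativity \eqref{pdc} and the majority condition \eqref{dom}, while auxiliary factors are controlled by the subexponential growth provided by Lemma \ref{subexp}.

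First, I would substitute $z_{k}^{*}=20a_{1}^{-1/2}a_{4}b_{k}^{1/2}$ from \eqref{z_k} and use the recursion $b_{k+1}=a_{1}\lambda_{u}^{2\widehat n_{k}}b_{k}^{2}$ from \eqref{b_k^2} to reduce the ratio to a fixed constant multiple of $\lambda_{cs0}^{\widehat n_{k(0)}}\lambda_{cs1}^{\widehat n_{k(1)}}\lambda_{u}^{-2\widehat n_{k}}b_{k}^{-3/2}$. Expanding $b_{k}=a_{1}^{-1}\lambda_{u}^{-S_{k}}$ with $S_{k}=\sum_{i\geq 0}\widehat n_{k+i}/2^{i}$ replaces $b_{k}^{-3/2}$ by $a_{1}^{3/2}\lambda_{u}^{(3/2)S_{k}}$, so it suffices to show $\lambda_{cs0}^{\widehat n_{k(0)}}\lambda_{cs1}^{\widehat n_{k(1)}}\lambda_{u}^{(3/2)S_{k}-2\widehat n_{k}}\to 0$. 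The majority condition \eqref{dom} combined with $\lambda_{cs0}<\lambda_{cs1}<1$ yields the elementary inequality $\lambda_{cs0}^{\widehat n_{k(0)}}\lambda_{cs1}^{\widehat n_{k(1)}}\leq (\lambda_{cs0}\lambda_{cs1})^{\widehat n_{k}/2}$, obtained by writing the left-hand side as $(\lambda_{cs0}\lambda_{cs1})^{\widehat n_{k}/2}\cdot (\lambda_{cs0}/\lambda_{cs1})^{(\widehat n_{k(0)}-\widehat n_{k(1)})/2}$ with the second factor at most $1$. Introducing $\sigma=-\log(\lambda_{cs0}\lambda_{cs1}\lambda_{u}^{2})>0$ from \eqref{pdc}, the log of the ratio is then bounded by
\[
\mathrm{const} - \frac{\sigma}{2}\widehat n_{k} + \frac{3}{2}(\log\lambda_{u})\,(S_{k}-2\widehat n_{k}).
\]

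To finish, I would invoke Lemma \ref{subexp}: for any $\eta>0$ and sufficiently large $k$, $\widehat n_{k+i}\leq (1+\eta)^{i}\widehat n_{k}$, hence $S_{k}\leq 2\widehat n_{k}/(1-\eta)$ and $S_{k}-2\widehat n_{k}\leq 2\eta\widehat n_{k}/(1-\eta)$. Choosing $\eta$ small enough that $3\eta\log\lambda_{u}/(1-\eta)<\sigma/2$ makes the coefficient of $\widehat n_{k}$ in the bound strictly negative, so the log tends to $-\infty$ and the ratio to $0$. The main delicate point I anticipate is the control of $S_{k}$, which involves \emph{all} the future generations $\widehat n_{k+i}$; partial dissipativity alone would not suffice, and it is essential that $\{\widehat n_{k}\}$ grow only subexponentially so that the tail of $S_{k}$ is negligible relative to $\widehat n_{k}$. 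This is precisely where the quadratic regime \eqref{k2}, the majority condition \eqref{dom}, and the partial dissipativity \eqref{pdc} must all conspire, as foreshadowed in Remark \ref{rmk2}.
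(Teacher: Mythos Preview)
Your proposal is correct and follows essentially the same approach as the paper's proof: substitute \eqref{z_k} and \eqref{b_k^2} to reduce to $\lambda_{cs0}^{\widehat n_{k(0)}}\lambda_{cs1}^{\widehat n_{k(1)}}\lambda_{u}^{(3/2)S_{k}-2\widehat n_{k}}$, control $S_{k}$ via Lemma~\ref{subexp}, and conclude from \eqref{pdc} and \eqref{dom}. The only cosmetic difference is in how the majority condition is deployed: the paper keeps the factored form $(\lambda_{cs0}\lambda_{u}^{1+\eta_{1}})^{\widehat n_{k(0)}}(\lambda_{cs1}\lambda_{u}^{1+\eta_{1}})^{\widehat n_{k(1)}}$ and uses $\lambda_{cs1}\lambda_{u}>1$ together with $\widehat n_{k(1)}\leq \widehat n_{k(0)}$ to absorb the second factor into the first, whereas you apply \eqref{dom} directly to obtain the geometric-mean bound $\lambda_{cs0}^{\widehat n_{k(0)}}\lambda_{cs1}^{\widehat n_{k(1)}}\leq (\lambda_{cs0}\lambda_{cs1})^{\widehat n_{k}/2}$ and then work additively in logarithms; both routes are equivalent and yield the same decisive coefficient $-\tfrac{1}{2}\log(\lambda_{cs0}\lambda_{cs1}\lambda_{u}^{2})$ in front of $\widehat n_{k}$.
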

\begin{proof}
By \eqref{b_k^2} and \eqref{z_k}, we have 
\begin{align*}
\frac{a_{2} \lambda_{cs0}^{\widehat n_{k(0)}}\lambda_{cs1}^{\widehat n_{k(1)}}z_{k}^{\ast}}{2^{-1}b_{k+1}}
&=40 a_{1}^{1/2} a_{2} a_{4} \lambda_{cs0}^{\widehat n_{k(0)}}\lambda_{cs1}^{\widehat n_{k(1)}} \lambda_{u}^{-2 \widehat n_{k}} b_{k}^{-3/2}\\
&=40 a_{1}^{1/2} a_{2} a_{4} \lambda_{cs0}^{\widehat n_{k(0)}}\lambda_{cs1}^{\widehat n_{k(1)}} \lambda_{u}^{-2 \widehat n_{k}}
(a_{1} \lambda_{u}^{\sum_{i=0}^{\infty}\widehat n_{k+i}/2^{i}})^{3/2}.
\end{align*}
Let $\eta$ be any positive integer.
By Lemma \ref{subexp} 
based on the quadratic condition \eqref{k2}, there exists $k_{0}>0$ such that, for any integers $k\geq k_0$ 
and $i\geq 0$, 
$\widehat n_{k+i}<(1+\eta)^i\widehat n_k$.
Thus we have the following evaluation.
\begin{align*}
\frac{3}{2}\sum_{i=0}^{\infty}\frac{\widehat n_{k+i}}{2^{i}}
\leq 
\frac{3\widehat n_{k}}{2}\sum_{i=0}^{\infty}
\biggl(\frac{1+\eta}{2}\biggr)^{i}
=\frac{3\widehat n_{k}}{1-\eta}=(3+\eta_{1})\widehat n_{k},
\end{align*}
where $\eta_{1}=3\eta/(1-\eta)$. 
Recall that $\lambda_{cs0}\lambda_{cs1}\lambda_u^{2}<1$ by \eqref{pdc}.
One can take $\eta>0$ sufficiently small so that 
$\eta_1$ satisfies $\lambda_{cs0}\lambda_{cs1}\lambda_u^{2(1+\eta_1)}<1$.
Since $\lambda_{cs1}\lambda_u>1$ by \eqref{ev} and $\widehat n_{k(1)}\leq \widehat n_{k(0)}$ by 
\eqref{dom}, 
$(\lambda_{cs1}\lambda_u)^{\widehat n_{k(1)}}\leq (\lambda_{cs1}\lambda_u)^{\widehat n_{k(0)}}$.
It follows that
\begin{align*}
\left|
\frac{a_{2} \lambda_{cs0}^{\widehat n_{k(0)}}\lambda_{cs1}^{\widehat n_{k(1)}}z_{k}^{\ast}}{2^{-1}b_{k+1}}
\right|
%&\leq40 a_{1}^{1/2} a_{2} a_{4} \lambda_{cs0}^{\widehat n_{k(0)}}\lambda_{cs1}^{\widehat n_{k(1)}} \lambda_{u}^{-2n_{k}}(a_{1}^{3/2} \lambda_{u}^{(3+\eta_{1})\widehat n_{k}})\\
&\leq 
40 a_{1}^{2} |a_{2} a_{4}| \lambda_{cs0}^{\widehat n_{k(0)}}\lambda_{cs1}^{\widehat n_{k(1)}} 
\lambda_{u}^{(1+\eta_{1})\widehat n_{k}}\\
&= 
40 a_{1}^{2} |a_{2} a_{4}|
(\lambda_{cs0}\lambda_{u}^{(1+\eta_{1})})^{\widehat n_{k(0)}}(\lambda_{cs1}\lambda_{u}^{(1+\eta_{1})})^{\widehat n_{k(1)}}, \\
&\leq 
40 a_{1}^{2} |a_{2} a_{4} |
(\lambda_{cs0}\lambda_{cs1}\lambda_{u}^{2(1+\eta_{1})})^{\widehat n_{k(0)}}\to 0\quad\text{as $k\to \infty$}.
\end{align*}
Thus the proof is now completed.
\end{proof}

We denote by $V_{\delta}$  
the $\delta$-neighbourhoods of $\{x=1/2\}\cap [0,1]^{2}$ in the $xz$-plane.
From  
 \eqref{varphi}
and
\eqref{def-g}, we have 
\begin{equation}\label{psi-a}
\left\{\begin{array}{ll}
\psi(x,z)=\varphi(x,z) &  \text{if}\ (x,z)\in [0,1]^{2}\setminus V_{\delta},\\[3pt]
\psi^{2}(x,z)=\varphi^{2}(x,z+a_{2}^{-1}t_{k+1}) & \text{if}\
 (x,z)\in V_{\delta}.
\end{array}\right.
\end{equation}
Let $\widehat x_k^{\rm u}$ be the centre point of $\widehat B_k^{\rm u}=B^{\rm u}(\widehat n_k,\underline{\widehat w}^{(k)})$  $(k=1,2,\ldots)$ given in Proposition \ref{lp_Prop_5_1}.

\begin{lem}\label{lem7.4}
For any $(\widehat x_k^{\rm u}+x, 1/2+z)\in \widehat B_k^{\rm u}\times[0,1]$,
\begin{multline}
\psi^{\widehat n_k+2}(\widehat x_k^{\rm u}+x, 1/2+z)
=(\widehat x_{k+1}^{\rm u}, 1/2)+
\Bigl(
-a_{1}\lambda_{u}^{2\widehat n_k} x^{2}
+a_{2}\lambda_{cs0}^{\widehat n_{k(0)}} \lambda_{cs1}^{\widehat n_{k(1)}}z, \notag\\
a_{4}(-1)^{\widehat n_{k(1)}}\lambda_{u}^{\widehat n_{k}}x
\Bigr).
\end{multline}
\end{lem}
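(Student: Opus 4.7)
The plan is to decompose the $(\widehat n_k + 2)$-step iterate under $g$ into two consecutive stages: the first $\widehat n_k$ iterates, which traverse the cs-blender region $\mathbb{V}_0 \cup \mathbb{V}_1$ along the itinerary $\underline{\widehat w}^{(k)}$, followed by the two iterates through the tangency region near $\{x = 1/2\}$.

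For the first stage, since the perturbation $h_{\underline{t}(L)}$ of Proposition \ref{pbn} is supported in $U_{3\delta/2}$ and the $g$-orbit stays inside $\mathbb{V}_0 \cup \mathbb{V}_1$ during these iterates --- which is disjoint from $U_{3\delta/2}$ by the constraint $\delta < \frac{1}{3}(1 - 2\lambda_u^{-1})$ --- we have $g^i = f^i$ on this orbit segment for $i \leq \widehat n_k$. I then invoke the affine form \eqref{blender}: on $\mathbb{V}_0$ the displacement $(x, z)$ from the centre is sent to $(\lambda_u x, \lambda_{cs0} z)$, and on $\mathbb{V}_1$ to $(-\lambda_u x, \lambda_{cs1} z)$. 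Iterating along the code $\underline{\widehat w}^{(k)}$, the net effect on the displacement is multiplication by $(-1)^{\widehat n_{k(1)}} \lambda_u^{\widehat n_k}$ in $x$ and by $\lambda_{cs0}^{\widehat n_{k(0)}} \lambda_{cs1}^{\widehat n_{k(1)}}$ in $z$. For the centre $(\widehat x_k^{\rm u}, 1/2)$ itself, Proposition \ref{lp_Prop_5_1} gives $\varphi^{\widehat n_k}(\widehat x_k^{\rm u}, 1/2) = (1/2, \widehat z_{k+1}^{\rm cs}) - (0, a_2^{-1} t_{k+1})$. Combining centre and displacement, the output of Stage 1 is
\[
\psi^{\widehat n_k}(\widehat x_k^{\rm u} + x,\ \tfrac{1}{2} + z) = \bigl(\tfrac{1}{2} + (-1)^{\widehat n_{k(1)}} \lambda_u^{\widehat n_k} x,\ \widehat z_{k+1}^{\rm cs} - a_2^{-1} t_{k+1} + \lambda_{cs0}^{\widehat n_{k(0)}} \lambda_{cs1}^{\widehat n_{k(1)}} z\bigr).
\]

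For the second stage, the image above lies in $V_\delta$ (with $z$-coordinate in $\widehat J_{k+1}^{\rm cs} \subset J_{k+1}^{\rm cs}$), so formula \eqref{psi-a} is applicable and gives $\psi^2(x', z') = \varphi^2(x', z' + a_2^{-1} t_{k+1})$. The crucial observation is that the shift $+a_2^{-1} t_{k+1}$ exactly cancels the offset $-a_2^{-1} t_{k+1}$ produced in Stage 1; this is precisely the role of $t_{k+1}$ as constructed in Proposition \ref{lp_Prop_5_1}. After cancellation, the $z$-centre is restored to $\widehat z_{k+1}^{\rm cs}$, and substitution into $\varphi^2(x, z) = \bigl(-a_1(x - 1/2)^2 + a_2 z,\ a_4(x - 1/2) + 1/2\bigr)$ from \eqref{tang}, using $\bigl((-1)^{\widehat n_{k(1)}} \lambda_u^{\widehat n_k} x\bigr)^2 = \lambda_u^{2\widehat n_k} x^2$, yields the claimed formula modulo the identification $a_2 \widehat z_{k+1}^{\rm cs} = \widehat x_{k+1}^{\rm u}$. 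This last identity is immediate from \eqref{eqn_1/2z} and the definition of the cs-interval, since $z \mapsto a_2 z$ carries $\widehat J_{k+1}^{\rm cs}$ onto $\widehat B_{k+1}^{\rm u}$ and thus sends centre to centre.

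The main technical point of the argument is the cancellation between the base-point offset in Proposition \ref{lp_Prop_5_1} and the perturbation shift built into $g$; but since these share the common parameter $t_{k+1}$ by design, the computation collapses to a clean algebraic identity, with no genuine obstacle beyond bookkeeping.
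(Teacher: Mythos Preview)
Your proposal is correct and follows essentially the same approach as the paper's proof: both decompose $\psi^{\widehat n_k+2}$ into the affine stage $\psi^{\widehat n_k}$ (where $g=f$ since the orbit avoids the support of $h_{\underline{t}(L)}$), track the displacement $(x,z)\mapsto\bigl((-1)^{\widehat n_{k(1)}}\lambda_u^{\widehat n_k}x,\ \lambda_{cs0}^{\widehat n_{k(0)}}\lambda_{cs1}^{\widehat n_{k(1)}}z\bigr)$ along the code, identify the image of the centre via Proposition~\ref{lp_Prop_5_1} (the paper unwinds this via \eqref{t_{k+1}} rather than quoting the proposition, which amounts to the same thing), then apply the second line of \eqref{psi-a} to obtain the $t_{k+1}$-cancellation, and finish with $a_2\widehat z_{k+1}^{\rm cs}=\widehat x_{k+1}^{\rm u}$ from \eqref{J&B}. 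The paper is slightly more explicit in writing out the one-step affine increments for $\xi_0,\xi_1,\zeta_0,\zeta_1$, but the logical content is identical.
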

\begin{proof}
For simplicity, let us here write 
$\underline{\widehat w}^{(k)}=w_{1}w_{2}\ldots w_{i}\ldots w_{\widehat n_k}$.
Let $\xi_0$, $\xi_1$ be the functions on $\mathbb{R}$ defined by $\xi_0(x)=\lambda_ux$ and 
$\xi_1(x)=\lambda_u(1-x)$.
Then, for any $\widehat x,x\in \mathbb{R}$, 
%\begin{align*}
%\xi_0(\widehat x+x)&=\lambda_u(\widehat x+x)
%=\xi_0(\widehat x)+\lambda_ux,\\
%\xi_1(\widehat x+x)&=\lambda_u(1-\widehat x-x)
%=\xi_1(\widehat x)-\lambda_ux.
%\end{align*}
\[
\xi_0(\widehat x+x)=\lambda_u(\widehat x+x)
=\xi_0(\widehat x)+\lambda_ux,\ 
\xi_1(\widehat x+x)=\lambda_u(1-\widehat x-x)
=\xi_1(\widehat x)-\lambda_ux.
\]
Similarly, by \eqref{IFS}, for any $\alpha,z$ such that 
$\alpha+z$ and $z$ are in the domains of the corresponding functions,
$$\zeta_0(\alpha+z)=\zeta_0(\alpha)+\lambda_{cs0}z,\quad 
\zeta_1(\alpha+z)=\zeta_1(\alpha)+\lambda_{cs1}z.$$
Hence, by the first equation of \eqref{psi-a} together with 
\eqref{blender} and \eqref{varphi}, for each $i\in\{1,\ldots,\widehat n_{k}\}$, 
\begin{multline}\label{eq7a}
\psi^{i}(\widehat x_k^{\rm u}+x, 1/2+z)
=
\Bigl(
\xi_{w_i}\circ\ldots\circ\xi_{w_2}\circ\xi_{w_1}(\widehat x_k^{\rm u})+
\lambda_{u}^{\widehat n_{i(0)}}
(-\lambda_{u})^{\widehat n_{i(1)}}
x, \\
\zeta_{w_i}\circ\ldots\circ\zeta_{w_2}\circ\zeta_{w_1}(1/2)
+
\lambda_{cs0}^{\widehat n_{i(0)}}
\lambda_{cs1}^{\widehat n_{i(1)}}
z
\Bigr).
\end{multline}
%%%%%%%%%%%%%%%%%%%%%%%%%%%%%%%%%%%%%%%%%%%%%%%%%%%%%%%%%%%%%%
%%%%%%%%%%%%%%%%%%%%%%%%%%%%%%%%%%%%%%%%%%%%%%%%%%%%%%%%%%%%%%
\if0
In fact, when $w_{i+1}=0$ for a given $i\in \{1, 2, \ldots, \widehat n_{k}-1\}$, we have
\[
n_{i+1(1)}=|w_{1}w_{2}\ldots w_{i}w_{i+1}|_{(1)}=|w_{1}w_{2}\ldots w_{i}|_{(1)}=n_{i(1)}
\] 
and 
\[
n_{i+1(0)}=|w_{1}w_{2}\ldots w_{i}w_{i+1}|_{(0)}=|w_{1}w_{2}\ldots w_{i}|_{(0)}+1=n_{i(0)}+1.
\] 
Hence, if \eqref{eq7a} holds for $i\in \{1, 2, \ldots, \widehat n_{k}-1\}$, 
then it also holds for $i+1$.
In exactly the same way, one can show that \eqref{eq7a} holds when $w_{i+1}=1$.
\fi
%%%%%%%%%%%%%%%%%%%%%%%%%%%%%%%%%%%%%%%%%%%%%%%%%%%%%%%%%%%%%%
%%%%%%%%%%%%%%%%%%%%%%%%%%%%%%%%%%%%%%%%%%%%%%%%%%%%%%%%%%%%%%
Since $\widehat x_k^{\rm u}$ is the centre point of $\widehat B_k^{\rm u}$, 
\[
\xi_{w_{\widehat n_{k}}}\circ\ldots\circ\xi_{w_2}\circ\xi_{w_1}(\widehat x_k^{\rm u})=1/2.
\]
Moreover,  by \eqref{t_{k+1}},  
\[\zeta_{w_{\widehat n_k}}\circ\ldots\circ\zeta_{w_2}\circ\zeta_{w_1}(1/2)=
\boldsymbol{\zeta}_{\underline{\widehat w}^{(k)}}(1/2)=\hat z_{k+1}^{cs}-a_2^{-1}t_{k+1}.
\]
Since $\widehat n_{k}=\left|\underline{\widehat w}^{(k)}\right|_{(0)}+\left|\underline{\widehat w}^{(k)}\right|_{(1)}=
	\widehat n_{k(0)}+\widehat n_{k(1)}$ by \eqref{dom2}, 
the equation \eqref{eq7a} shows that 
\[
\psi^{\widehat n_{k}}(\widehat x_k^{\rm u}+x, 1/2+z)
=\Bigl(
1/2+\lambda_{u}^{\widehat n_{k(0)}}(-\lambda_{u})^{\widehat n_{k(1)}}x, \\
\hat z_{k+1}^{cs}-a_2^{-1}t_{k+1}+
\lambda_{cs0}^{\widehat n_{k(0)}}\lambda_{cs1}^{\widehat n_{k(1)}}z
\Bigr)\in V_{\delta}.
\]
By the second equation of \eqref{psi-a}, 
\begin{align*}
\psi^{2}\circ \psi^{\widehat n_{k}}&(\widehat x_k^{\rm u}+x, 1/2+z)
=
\varphi^{2} \left(\psi^{\widehat n_{k}}(\widehat x_k^{\rm u}+x, 1/2+z)+(0,a_{2}^{-1}t_{k+1})
\right) \\
& =\varphi^{2}
\left(
1/2+\lambda_{u}^{\widehat n_{k(0)}}(-\lambda_{u})^{\widehat n_{k(1)}}x,\ 
\hat z_{k+1}^{cs}-a_2^{-1}t_{k+1}+
\lambda_{cs0}^{\widehat n_{k(0)}}\lambda_{cs1}^{\widehat n_{k(1)}}z+a_{2}^{-1}t_{k+1}
\right)\\ 
& =\varphi^{2}
\left(
1/2+(-1)^{\widehat n_{k(1)}}\lambda_{u}^{\widehat n_{k}}x,\ 
\widehat z_{k+1}^{\rm cs}+
\lambda_{cs0}^{\widehat n_{k(0)}}\lambda_{cs1}^{\widehat n_{k(1)}}z
\right) , \\
\intertext{by \eqref{tang}, \eqref{varphi} 
 and \eqref{dom2},}
&=
\left(
-a_{1}\lambda_{u}^{2\widehat n_{k}}x^{2}
+a_{2}\lambda_{cs0}^{\widehat n_{k(0)}}\lambda_{cs1}^{\widehat n_{k(1)}}z+a_{2}\widehat z_{k+1}^{\rm cs},\  
 a_{4}(-1)^{\widehat n_{k(1)}}\lambda_{u}^{\widehat n_{k}}x+1/2
\right). 
\end{align*}
Since $a_{2}\widehat z_{k+1}^{\rm cs}=\widehat x_{k+1}^{\rm u}$ from \eqref{J&B}, we have obtained 
the equation required in this lemma.
\end{proof}

For each $k>0$, we have the rectangle 
$W_{k}=\widehat\pi(\mathbb{W}_{k})$ with    
the sides
$\partial_{z}W_{k}=
\widehat\pi
(
\mathbb{W}_{k}\cap \{z=1/2\pm z_{k}^{\ast}\}
)$,  
$\partial_{x}W_{k}=
\widehat\pi
(
\mathbb{W}_{k}\cap \{x=\widehat x_k^{\rm u}\pm b_{k}/2 \}
)
$ 
and the central line 
$\boldsymbol{c}(W_{k})=\widehat\pi(\mathbb{W}_{k}\cap \{x=\widehat x_k^{\rm u}\})$.
See Figure \ref{xz}.
%%%%%%%%%%%%%%%%%%%%%%%%%%%%%%%%%%%%%%%%%%%%%%%%%%%%%%%
\begin{figure}[hbtp]
\centering
\scalebox{0.9}{\includegraphics[clip]{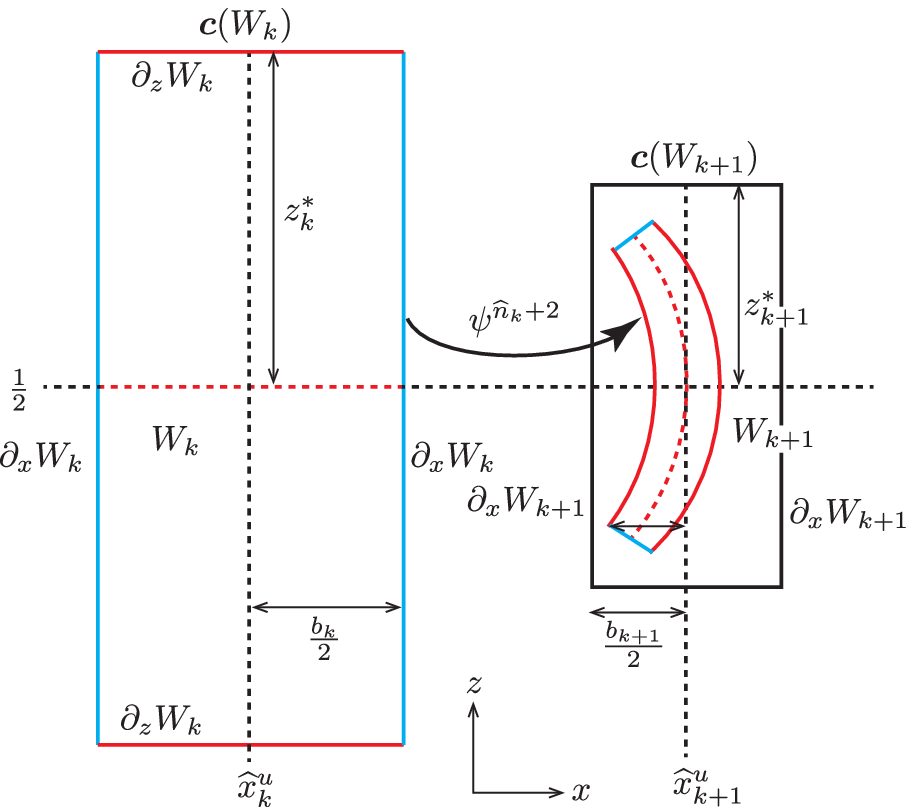}}
\caption{}
\label{xz}
\end{figure}
%%%%%%%%%%%%%%%%%%%%%%%%%%%%%%%%%%%%%%%%%%%%%%%%%%%%%%%

\begin{prop}\label{x-width}
There is an integer $k_{0}^{'}\geq k_{0}$ such that, for any 
integer $k>k_{0}^{'}$, 
\[
\pi_{1}(\psi^{\widehat n_k+2}(W_{k}))\subset \pi_{1}(W_{k+1}),
\]
where $\pi_{1}$ is the projection with 
$\pi_{1}(x,z)=x$.
\end{prop}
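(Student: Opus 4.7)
The plan is to plug the parametrisation of $W_k$ into the explicit formula from Lemma \ref{lem7.4}, and bound the resulting two terms separately, each by $b_{k+1}/4$. The first bound uses the defining recursion \eqref{b_k^2} of the sequence $\{b_k\}$, while the second uses Lemma \ref{lem7.3}.

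Concretely, write a point of $W_k$ in the form $(\widehat x_k^{\rm u}+x,\,1/2+z)$ with $|x|\leq b_k/2$ and $|z|\leq z_k^*$. By Lemma \ref{lem7.4}, the first coordinate of $\psi^{\widehat n_k+2}(\widehat x_k^{\rm u}+x,\,1/2+z)$ is
\[
\widehat x_{k+1}^{\rm u}\;-\;a_1\lambda_u^{2\widehat n_k} x^2\;+\;a_2\lambda_{cs0}^{\widehat n_{k(0)}}\lambda_{cs1}^{\widehat n_{k(1)}}z,
\]
so the claim $\pi_1(\psi^{\widehat n_k+2}(W_k))\subset\pi_1(W_{k+1})$ amounts to showing that the two $x$-dependent and $z$-dependent summands have combined absolute value at most $b_{k+1}/2$.

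For the parabolic term, $|a_1\lambda_u^{2\widehat n_k}x^2|\leq a_1\lambda_u^{2\widehat n_k}(b_k/2)^2=\tfrac14 a_1\lambda_u^{2\widehat n_k}b_k^2=b_{k+1}/4$ by \eqref{b_k^2}. For the centre-stable contribution, $|a_2\lambda_{cs0}^{\widehat n_{k(0)}}\lambda_{cs1}^{\widehat n_{k(1)}}z|\leq |a_2|\lambda_{cs0}^{\widehat n_{k(0)}}\lambda_{cs1}^{\widehat n_{k(1)}}z_k^*$, and Lemma \ref{lem7.3} says the ratio of this to $b_{k+1}/2$ tends to zero. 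Thus there exists $k_0'\geq k_0$ such that for every $k\geq k_0'$ this term is bounded by $b_{k+1}/4$. Adding the two bounds yields $b_{k+1}/2$, proving the desired inclusion.

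The computation is essentially routine once Lemmas \ref{lem7.3} and \ref{lem7.4} are in hand; the main point to be careful about is simply that the ``large $k$'' coming from Lemma \ref{lem7.3} must be chosen uniformly in $(x,z)\in W_k$, which is automatic since the bound depends only on $k$. No subtlety arises from the sign $(-1)^{\widehat n_{k(1)}}$ or from the shift by $\widehat x_{k+1}^{\rm u}$, because we only need containment of the $x$-projection, centred at $\widehat x_{k+1}^{\rm u}$ in both source and target.
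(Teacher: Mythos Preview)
Your proof is correct and follows essentially the same approach as the paper's own proof: both apply Lemma \ref{lem7.4} to express the $x$-displacement as the sum of the parabolic term $a_1\lambda_u^{2\widehat n_k}x^2$ and the centre-stable term, bound the first by $b_{k+1}/4$ via \eqref{b_k^2}, and control the second using Lemma \ref{lem7.3}. The paper phrases the estimate in terms of the Hausdorff distance to the central line $\boldsymbol{c}(W_{k+1})$ and shows the ratio to $b_{k+1}/2$ equals $\tfrac12+o(1)$, whereas you split into two $b_{k+1}/4$ pieces; the content is identical.
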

\begin{proof}
%The first claim is guaranteed by \eqref{ev} and \eqref{a_i}. In fact, 
%the centre of $\pi_{1}(W_{k})$ is equal to that of $\widehat G_k^{\rm u}$, 
%and  
%\[
%\left|\widehat G_k^{\rm u}\right|=(1-2\lambda_{u}^{-1})(\lambda_{u}^{-1})^{\widehat n_k}>b_{k}
%=\left|\pi_{1}(W_{k})\right|.
%\]
%The remaining part is devoted to the proof of the second claim.
From the form \eqref{tang}, $\psi^{\widehat n_k+2}(\partial_{z}W_{k})$ consists of 
two quadratic curves. See Figure \ref{xz}.
Points of $\psi^{\widehat n_k+2}(W_{k})$ furthest from $\boldsymbol{c}(W_{k+1})$ 
are endpoints of one of the quadratic curves.
By Lemma \ref{lem7.4} and \eqref{b_k^2}, 
we have
\begin{align*}
d_{h}\left(\boldsymbol{c}(W_{k+1}), \psi^{\widehat n_k+2}(W_{k})\right)
&=
a_{1}(\lambda_{u}^{\widehat n_k}b_{k}/2)^{2}+\left|a_{2}\lambda_{cs0}^{\widehat n_{k(0)}}\lambda_{cs1}^{\widehat n_{k(1)}}z_{k}^{\ast}\right|\\
&
=4^{-1}b_{k+1}+\left|a_{2}\lambda_{cs0}^{\widehat n_{k(0)}}\lambda_{cs1}^{\widehat n_{k(1)}}z_{k}^{\ast}\right|,
\end{align*}
where $d_{h}$ is the Hausdorff distance of the two subsets. 
It follows from \eqref{b_k^2} and Lemma \ref{lem7.3} that the width comparison along the $x$-direction is the following:
\[
\frac{d_{h}\left(\boldsymbol{c}(W_{k+1}), \psi^{\widehat n_k+2}(W_{k})\right)}{d_{h}\left(\boldsymbol{c}(W_{k+1}), \partial_{x}W_{k+1}\right)}
= 
\frac{1}{2}+
\left|
\frac{a_{2}
\lambda_{cs0}^{\widehat n_{k(0)}}\lambda_{cs1}^{\widehat n_{k(1)}}z_{k}^{\ast}}{b_{k+1}/2}
\right|.
\]
Note that, from Lemma \ref{lem7.3}, the right-hand side of the inequality is less than $1$ if one takes $k$ sufficiently large.
This proves the desired assertion and completes the proof of the proposition.
\end{proof}

\begin{prop}\label{z-width}
There is an integer $k_{0}^{''}\geq k_{0}$ such that, for any 
integer $k>k_{0}^{''}$, 
\[
\pi_{3}(\psi^{\widehat n_k+2}(W_{k}))\subset \pi_{3}(W_{k+1}),
\]
where $\pi_{3}$ is the projection with 
$\pi_{3}(x,z)=z$.
\end{prop}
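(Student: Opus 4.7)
The proof of Proposition \ref{z-width} will mirror the structure of Proposition \ref{x-width} but along the cs-direction, and should be considerably easier: the $z$-component of $\psi^{\widehat n_k+2}$ produced by Lemma \ref{lem7.4} is purely linear in $x$ and, in particular, does not involve the centre-stable eigenvalues $\lambda_{cs0}, \lambda_{cs1}$. Consequently the delicate Lemma \ref{lem7.3} (the place where the partially dissipative condition \eqref{pdc} is used) is not needed here.

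The first step is to apply Lemma \ref{lem7.4} and read off
\[
\pi_3\bigl(\psi^{\widehat n_k+2}(\widehat x_k^{\rm u}+x,\,1/2+z)\bigr)
= \tfrac{1}{2} + a_4(-1)^{\widehat n_{k(1)}}\lambda_u^{\widehat n_k}x.
\]
Since the expression is independent of $z$ and linear in $x$, for $(\widehat x_k^{\rm u}+x,\,1/2+z)\in W_k$ (so $|x|\leq b_k/2$) the set $\pi_3(\psi^{\widehat n_k+2}(W_k))$ is an interval centred at $1/2$ with half-width $|a_4|\lambda_u^{\widehat n_k}b_k/2$.

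The second step is to compare this half-width with $z_{k+1}^{\ast}=20\,a_1^{-1/2}a_4\,b_{k+1}^{1/2}$. The identity \eqref{b_k^2} gives $b_{k+1}^{1/2}=a_1^{1/2}\lambda_u^{\widehat n_k}b_k$, hence
\[
z_{k+1}^{\ast}=20\,a_4\,\lambda_u^{\widehat n_k}b_k,
\]
where I use $a_4>0$ from \eqref{a_i}. Therefore the ratio of the half-widths is
\[
\frac{|a_4|\lambda_u^{\widehat n_k}b_k/2}{z_{k+1}^{\ast}}=\frac{1}{40}<1,
\]
so $\pi_3(\psi^{\widehat n_k+2}(W_k))\subset[1/2-z_{k+1}^{\ast},\,1/2+z_{k+1}^{\ast}]=\pi_3(W_{k+1})$.

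The only role of requiring $k\geq k_0''$ is to guarantee the hypothesis of Lemma \ref{lem7.4}, namely $W_k\subset \widehat B_k^{\rm u}\times[0,1]$. The inclusion $\pi_1\circ\widehat\pi(\mathbb{W}_k)\subset \widehat B_k^{\rm u}$ follows from Proposition \ref{proj-w}; the condition $[1/2-z_k^{\ast},1/2+z_k^{\ast}]\subset[0,1]$ amounts to $z_k^{\ast}\leq 1/2$, which holds for all sufficiently large $k$ since $b_k\to 0$ (as $\lambda_u>1$ in \eqref{ev}). There is no substantive obstacle; the factor $20$ appearing in the definition \eqref{z_k} of $z_k^{\ast}$ was put there precisely to make this comparison succeed, leaving an ample margin.
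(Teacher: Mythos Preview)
Your proof is correct and follows essentially the same approach as the paper: both apply Lemma \ref{lem7.4} to compute the $z$-extent of $\psi^{\widehat n_k+2}(W_k)$ as $|a_4|\lambda_u^{\widehat n_k}b_k/2$, and then use the identity \eqref{b_k^2} to compare it with $z_{k+1}^\ast$. The paper's version phrases the comparison as $|a_4\lambda_u^{\widehat n_k}b_k/2|<z_{k+1}^\ast/2$, which via \eqref{z_k} becomes $a_1\lambda_u^{2\widehat n_k}b_k^2<400\,b_{k+1}$ and follows immediately from \eqref{b_k^2}; your ratio $1/40$ is the same computation with the looser (and sufficient) target $z_{k+1}^\ast$.
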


\begin{proof}
By the same reason stated in the beginning of the proof of Proposition \ref{x-width}, 
it is sufficient to evaluate how the endpoints of components of 
$\psi^{\widehat n_k+2}(\partial_{z}W_{k})$  
are far from $\{z=1/2\}$. More concretely, 
it follows from  Lemma \ref{lem7.4} that 
it suffices to prove that the following inequality:
\[
\left|
a_{4}\lambda_{u}^{n_{k}}b_{k}/2
\right|<z_{k+1}^{\ast}/2.
\]
By \eqref{z_k}, it is equivalent to 
\[
a_{1}\lambda_{u}^{2n_{k}}b_{k}^{2}<400 b_{k+1}.
\]
This is established from \eqref{b_k^2}  and  the proof is accomplished.
\end{proof}

Now let us turn our attention to $Y_{k}$ defined in 
\eqref{Y_k}.
\begin{prop}\label{y-width}
For every integer $k>k_{0}$, 
$Y_{k}$ is contained in $\bigl(\frac{1}{2}-\frac{a_{3}}{2}, \frac{1}{2}+\frac{a_{3}}{2}\bigr)$ and 
\[
\lim_{k\to+\infty} 
\left|Y_{k}
\right|=0.
\]
\end{prop}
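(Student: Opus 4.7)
The plan is to compute the action of $\widetilde\psi^{\widehat n_k+2}$ on the $y$-coordinate explicitly and then iterate. First I would observe that the perturbation map $h_{\underline{t}(L)}$ of \eqref{def-h} leaves the $y$-coordinate untouched, so the $y$-dynamics of $g$ coincides with that of $f$. Using \eqref{blender} and \eqref{tang}, I would decompose $\widetilde\psi^{\widehat n_k+2}$ as the composition of the $\widehat n_k$ hyperbolic iterates through $\mathbb{V}_0\cup\mathbb{V}_1$ dictated by the itinerary $\underline{\widehat w}^{(k)}$, followed by the two-step tangency iterate $f^2$ on $U_\delta$.

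Each hyperbolic iterate acts on $y$ as either $y\mapsto\lambda_{ss}y$ or $y\mapsto 1-\lambda_{ss}y$ according to whether the corresponding symbol is $0$ or $1$, so their composition is an affine map with slope of absolute value $\lambda_{ss}^{\widehat n_k}$ that preserves $[0,1]$. The final tangency iterate acts as $y\mapsto a_3(y-1/2)+1/2$, which sends $[0,1]$ into $[\tfrac12-|a_3|/2,\tfrac12+|a_3|/2]$ and contracts lengths by the factor $|a_3|$. Combining the two effects, $\widetilde\psi^{\widehat n_k+2}$ shrinks the length of any $y$-interval by $|a_3|\lambda_{ss}^{\widehat n_k}$ and places its image inside $[\tfrac12-|a_3|/2,\tfrac12+|a_3|/2]$.

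Iterating from $|Y_{k_0}|=1-2\lambda_{ss}$ then gives
\[
|Y_k|=(1-2\lambda_{ss})\,|a_3|^{k-k_0}\,\lambda_{ss}^{\widehat n_{k_0}+\widehat n_{k_0+1}+\cdots+\widehat n_{k-1}},
\]
which tends to $0$ geometrically since $|a_3|<1-2\lambda_{ss}<1$. For the containment, each $Y_k$ with $k>k_0$ is the image under the tangency map of a subinterval of $[0,1]$ of length $\lambda_{ss}^{\widehat n_{k-1}}|Y_{k-1}|<1$, hence lies strictly inside $\bigl(\tfrac12-|a_3|/2,\tfrac12+|a_3|/2\bigr)$, which is the interval the proposition names. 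There is no genuine obstacle: the whole argument reduces to a linear computation made transparent by the explicit affine forms \eqref{blender}, \eqref{tang} and by the fact that the perturbation \eqref{def-h} is trivial in the $y$-direction.
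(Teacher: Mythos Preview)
Your proof is correct and follows essentially the same route as the paper's: decompose $\widetilde\psi^{\widehat n_k+2}$ into the $\widehat n_k$ affine $y$-contractions coming from \eqref{blender} followed by the tangency map $y\mapsto a_3(y-1/2)+1/2$ from \eqref{tang}, read off the contraction factor $|a_3|\lambda_{ss}^{\widehat n_k}$, and iterate to obtain the explicit length formula and the containment. Your version is in fact slightly more careful, since you explicitly justify that the perturbation $h_{\underline t(L)}$ is the identity on the $y$-coordinate and hence the $y$-dynamics of $g$ agree with those of $f$; the paper leaves this implicit.
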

\begin{proof}

For the generation 
$\widehat n_{k_{0}}$  of  
$\widehat B_{k_{0}}^{\rm u}=B^{\rm u}(\widehat n_{k_{0}},\underline{\widehat w}^{(k_{0})})$, we have  
\[
\left|
\widetilde\psi^{\widehat n_{k_{0}}}(Y_{k_{0}})
\right|
=\lambda_{ss}^{\widehat n_{k_{0}}}\left|Y_{k_{0}}\right|
=\lambda_{ss}^{\widehat n_{k_{0}}}(1-2\lambda_{ss}).
\]
From \eqref{tang} together with \eqref{a_i}, 
$
Y_{k_{0}+1}=
\widetilde\psi^{\widehat n_{k_{0}}+2}(Y_{k_{0}})
\subset \bigl(\frac{1}{2}-\frac{a_{3}}{2}, \frac{1}{2}+\frac{a_{3}}{2}\bigr)
$
and
\[
\left|
Y_{k_{0}+1}
\right|
=|a_{3}|\lambda_{ss}^{\widehat n_{k_{0}}}|Y_{k_{0}}|
=|a_{3}|\lambda_{ss}^{\widehat n_{k_{0}}}(1-2\lambda_{ss}).
\]
By inductive steps, one can show that, for every integer $k>k_{0}$, 
$Y_{k}\subset \bigl(\frac{1}{2}-\frac{a_{3}}{2}, \frac{1}{2}+\frac{a_{3}}{2}\bigr)$ and 
\[
\left|
Y_{k}
\right|
=|a_{3}|^{k-k_{0}}
\lambda_{ss}^{\sum_{i=0}^{k-k_{0}} \widehat n_{k+i}}
(1-2\lambda_{ss}).
\]
Hence, it converges to $0$ as $k\to+\infty$. 
\end{proof}
\begin{proof}[Proof of Theorem \ref{thmWD}]
From Propositions \ref{x-width} and \ref{z-width}, 
there is  an integer $k_{1}\geq k_{0}$ such that, 
for any integer $k\geq k_{1}$,
\[
\psi^{\widehat n_k+2}(W_{k})\subset \Int (W_{k+1}),
\]
and moreover 
\[
\lim_{k\to+\infty}\mathrm{diam}(W_{k+1})=0.
\]
On the other hand, Proposition \ref{y-width} implies that, 
for any $k>k_{1}$, diameter of $Y_{k}$  converges to zero as $k\to+\infty$. 
Since $W_{k}\times Y_{k}$  is equal to $\mathbb{W}_{k}$, the proof is complete.
\end{proof}

\section{Probabilistic representations}\label{s8}
Let $g$ be the diffeomorphism defined in \eqref{def-g},  
$\mathbb{D}_{k}=\Int (\mathbb{W}_{k})$ the contracting wandering domain of $g$ 
and $k_{1}$ the integer obtained  in Theorem \ref{thmWD}. 
Also with Proposition \ref{proj-w}, 
for each $k> k_{1}$, 
\begin{subequations}
\begin{equation}\label{eq8-1}
g^{\widehat n_{k}+2}(\mathbb{D}_{k})\subset \mathbb{D}_{k+1},\quad  
\pi_{1}\circ \widehat{\pi}(\mathbb{D}_{k})\subset \widehat 
B_k^{\rm u}=B^{\rm u}(\widehat n_{k},\underline{\widehat w}^{(k)}),
\end{equation}
where $\widehat{\pi}$   and $\pi_{1}$ 
are the projections given as in \eqref{varphi} and 
Proposition \ref{x-width}, respectively, and  $\widehat n_{k}=\left|\underline{\widehat w}^{(k)}\right|$. 
The itinerary $\underline{\widehat w}^{(k)}$ 
consists of three parts as
\begin{equation}\label{eq8-2}
\underline{\widehat w}^{(k)}=\underline{w}^{(k+Lk)}\underline{v}^{(k)}\underline{\gamma}^{(k)},
\end{equation} 
where 
the sub-code 
$\underline{v}^{(k)}$ is a 
$k^{2}$-tuple  $\underline{v}^{(k)}=(v_{1}v_{2}\ldots v_{k^{2}})$,  at least $k^{2}-1$ elements of which
can be chosen freely, 
and the other parts satisfy 
\begin{equation}\label{eq8-3}
\left|\underline{w}^{(k+Lk)}\right|=n_{0}+k+Lk,\ 
\left|\underline{\gamma}^{(k)}\right|=m_{k}, 
\end{equation}
\end{subequations}
where $n_{0}+k+Lk$ and $m_{k}$ are integers given in 
Lemma \ref{l_zeta}. 
Let us now take advantage of this freedom of  $\underline{v}^{(k)}$ 
to realise historicity and 
physicality.

\subsection{Historicity}
The following theorem guarantees half of the claim in Theorem \ref{thm1}, the part about historicity.
\begin{thm}\label{hist}
There exists a sequence $\boldsymbol{v}=(\underline{v}^{(k)})_{k> k_{1}}$ of codes
such that 
$\mathbb{D}_{k}$ 
is a historic contracting wandering domain  for $g=g_{\boldsymbol{v}}$.
\end{thm}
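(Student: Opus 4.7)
The plan is to exploit the freedom in the sub-codes $\underline{v}^{(k)}$ in order to make the prefix frequencies of the symbol $0$ in the concatenated itinerary
$\omega=\underline{\widehat w}^{(k_{1}+1)}*\,*\,\underline{\widehat w}^{(k_{1}+2)}*\,*\,\ldots$
(where each $*$ denotes one of the two tangency iterates in $\mathbb{G}$ between consecutive blocks) oscillate as the time window grows. For any $x\in \mathbb{D}_{k_{1}+1}$, the iterate $g^{i}(x)$ lies in $\mathbb{V}_{w_{i}}$ whenever the $i$-th symbol $w_{i}$ of $\omega$ is $0$ or $1$, while $g^{i}(x)$ lies in $\mathbb{G}$ at the transitional times. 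Since $\mathbb{V}_{0}$ and $\mathbb{V}_{1}$ are disjoint closed subsets of $\mathbb{B}$, Urysohn's lemma provides a continuous $\phi:\mathbb{B}\to [0,1]$ with $\phi\equiv 1$ on $\mathbb{V}_{0}$ and $\phi\equiv 0$ on $\mathbb{V}_{1}$. Non-convergence of $\frac{1}{n+1}\sum_{i=0}^{n}\phi(g^{i}(x))$ then implies that $\mu_{n}=\frac{1}{n+1}\sum_{i=0}^{n}\delta_{g^{i}(x)}$ fails to converge weakly, which is precisely historic behaviour.

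To construct $\boldsymbol{v}$, I would fix a constant $C>k_{1}$ and set $\ell_{j}=C\cdot 10^{j}$, then alternate two types of blocks. For $\ell_{2j-1}<k\leq \ell_{2j}$ (the \emph{0-phase}), set $\underline{v}^{(k)}=0^{k^{2}}$; for $\ell_{2j}<k\leq \ell_{2j+1}$ (the \emph{balanced phase}), set $\underline{v}^{(k)}$ to be any code of length $k^{2}$ having $k^{2}/2+Mk$ zeros and $k^{2}/2-Mk$ ones, where $M$ is a fixed integer exceeding the growth constants $N_{1}+L+1$ from Lemma \ref{l_zeta}. This respects the quadratic condition $|\underline{v}^{(k)}|=k^{2}$; for the majority condition \eqref{dom}, in the 0-phase $\underline{v}^{(k)}$ contributes $k^{2}$ zeros to $\underline{\widehat w}^{(k)}$ against only $O(k)$ symbols from $\underline{w}^{(k+Lk)}\underline{\gamma}^{(k)}$, while in the balanced phase the excess of $2Mk$ zeros in $\underline{v}^{(k)}$ dominates the $O(k)$ extra ones from the flanking sub-codes. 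Applying Proposition \ref{pbn} then produces $g=g_{\boldsymbol{v}}$, and Theorem \ref{thmWD} guarantees that each $\mathbb{D}_{k}$ is a contracting wandering domain for this $g$.

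To verify historicity, set $N_{k}=\sum_{j=k_{1}+1}^{k}(\widehat n_{j}+2)$ and $Z_{k}=\sum_{j=k_{1}+1}^{k}\widehat n_{j(0)}$. Using $\widehat n_{j}=j^{2}+O(j)$, summation yields $N_{k}=k^{3}/3+O(k^{2})$. The geometric ratio $\ell_{j+1}/\ell_{j}=10$ makes each new phase swamp the cumulative past, so a direct estimate gives $Z_{\ell_{2j}}/N_{\ell_{2j}}\geq 1-10^{-3}-o(1)$ at the end of each 0-phase and $Z_{\ell_{2j+1}}/N_{\ell_{2j+1}}\leq 1/2+10^{-3}+o(1)$ at the end of each balanced phase. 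Since the total number of $\mathbb{G}$-visits up to time $N_{k}$ is $2(k-k_{1})=O(N_{k}^{1/3})$, one has $\int \phi\, d\mu_{N_{k}-1}=Z_{k}/N_{k}+o(1)$, whence
\[
\limsup_{n\to\infty}\int \phi\, d\mu_{n}\ \geq\ 1-10^{-3}\ >\ 1/2+10^{-3}\ \geq\ \liminf_{n\to\infty}\int \phi\, d\mu_{n},
\]
establishing the non-convergence of $(\mu_{n})$ and hence historic behaviour along every orbit starting in $\mathbb{D}_{k_{1}+1}$.

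The main technical obstacle is to reconcile the two rigid constraints of \S\ref{ss7.1}---the quadratic condition pinning $|\underline{v}^{(k)}|=k^{2}$ and the majority condition forbidding $\widehat n_{k(1)}>\widehat n_{k(0)}$---with the need for the fraction of $0$s in long prefixes of $\omega$ to oscillate by a fixed uniform amount. The geometric spacing of $\{\ell_{j}\}$ resolves the oscillation issue, and the $2Mk$ bias in the balanced phase maintains majority at the cost of only an $O(1/k)$ correction to the asymptotic frequency $1/2$. It is worth emphasising that the argument never identifies specific weak* accumulation measures of $(\mu_{n})$: oscillation of $\int \phi\, d\mu_{n}$ for the single continuous test function $\phi$ already rules out weak* convergence, and that is all historic behaviour requires.
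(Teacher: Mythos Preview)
Your argument is correct and takes a genuinely different route from the paper's. The paper fixes an era sequence $(k_s)$ with $\sum_{k=k_s}^{k_{s+1}-1}k^2>s\sum_{k=k_1}^{k_s-1}k^2$ and chooses $\underline{v}^{(k)}$ to have a $3{:}1$ (for even $s$) or $7{:}1$ (for odd $s$) ratio of zeros to ones; it then shows, via a careful splitting into sojourns near $P_g$ and $Q_g$, that the empirical measures converge along the even and odd subsequences to the explicit combinations $\tfrac34\delta_{P_g}+\tfrac14\delta_{Q_g}$ and $\tfrac78\delta_{P_g}+\tfrac18\delta_{Q_g}$, respectively. Your approach bypasses the identification of limit measures entirely: you alternate between all-zero and near-balanced blocks on geometrically growing scales and detect oscillation through a single Urysohn observable separating $\mathbb{V}_0$ from $\mathbb{V}_1$, reducing the problem to elementary symbol-counting. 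This is more economical and avoids the neighbourhood analysis around the fixed points; the paper's method, in exchange, yields the stronger information of specific weak$^*$ accumulation points. One small correction: your test function $\phi$ must be defined on the compact set $\widehat{\mathbb{B}}=\bigcup_{i=0}^{2}g^i(\mathbb{B})$ rather than on $\mathbb{B}$, since the transitional iterate $g^{\widehat n_k+1}(x)$ lies in $g(\mathbb{G})\subset\mathbb{R}^3\setminus\mathbb{B}$; this does not affect the estimate because those iterates are $O(N_k^{1/3})$ in number, as you already note.
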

To show this claim we needs 
the following two conditions:
\begin{description}
\item[Era condition] 
We consider an increasing sequence $(k_{s})_{s\in \mathbb{N}}$ of integers,
which satisfies the following condition: 
for every $s\in \mathbb{N}$,
\begin{equation}\label{era-ratio}
\sum_{k=k_{s}}^{k_{s+1}-1} k^{2}>s \sum_{k=k_{1}}^{k_{s}-1} k^{2}.
\end{equation}
Note that this setting provides us with a situation 
that the new era from 
$k_{s}$ until $k_{s+1}-1$ to be so dominant as to ignore the old one from $k_{1}$ until $k_{s}-1$,
see Claim \ref{clam1}.
\item[Code condition (for historic behaviour)]
On the era condition,
for any integer $k=k(s)$ with $k_{s}<k\leq k_{s+1}$, 
we consider each entry of $\underline{v}^{(k)}=(v_{1}v_{2}\ldots v_{k^{2}})$ 
 satisfying \eqref{dom}  and the following rules: 
 \begin{subequations}
\begin{itemize}
\item 
if $s$ is even, 
\begin{equation}\label{h1}
v_{i}=\left\{
\begin{array}{ll}
0 & \text{for}\ i=1,\ldots, \left\lfloor 3k(s)^{2}/4\right\rfloor
\\[3pt]
1 & \text{for}\ i=\lceil 3k(s)^{2}/4\rceil,\ldots, k(s)^{2},
\end{array}\right.
\end{equation} 
that is, 
$\underline{v}^{(k)}=
\overbrace{000\ldots\ldots0}^{\left\lfloor 3k^{2}/4\right\rfloor}
\overbrace{1\ldots 1}^{\left\lceil k^{2}/4\right\rceil}$, 
\item
if $s$ is odd, 
\begin{equation}\label{h2}
v_{i}=\left\{
\begin{array}{ll}
0 & \text{for}\ i=1,\ldots, \left\lfloor 7k(s)^{2}/8\right\rfloor
\\[3pt]
1 & \text{for}\  i=\lceil 7k(s)^{2}/8\rceil,\ldots, k(s)^{2},
\end{array}\right.
\end{equation} 
that is, $\underline{v}^{(k)}=
\overbrace{000\ldots\ldots0}^{\left\lfloor 7k^{2}/8\right\rfloor}
\overbrace{1\ldots 1}^{\left\lceil k^{2}/8\right\rceil}$,  
\end{itemize}
\end{subequations}
where $\left\lfloor\cdot \right\rfloor$ and $\lceil\cdot\rceil$ indicate the 
floor and ceiling functions, respectively. 
\end{description}
Note that both \eqref{h1} and \eqref{h2} satisfy 
the quadratic condition \eqref{k2} and the majority condition \eqref{dom}.

It may be obvious that historic behaviour appears under \eqref{h1} and \eqref{h2}. 
Indeed, in \cite{CV01, KS17} they gave constructions similar to ours, 
but did not provide detailed proofs. However, 
we here describe a proof in detail for the convenience of readers.

\begin{proof}[Proof of Theorem \ref{hist}]
The proof is carried out under the era and code conditions, 
which are required to show Claims \ref{clam1} and \ref{clam2}.

For given non-negative integers $n$, $m$ with $n<m$ and 
$x\in \mathbb{D}_{k_{1}}$, the  \emph{empirical probability measure} 
is defined by
\[\nu_{x}^{(n,m)}=\frac{1}{m-n}\sum_{i=n}^{m-1}\delta_{g^{i}(x)}.\]
For any  integer $k_{s}>0$, we write 
\[
\widehat N_{k_{s}}=\sum_{k=k_{1}}^{k_{s}-1} (\widehat n_{k}+2).
\]
Let  $\widehat{\mathbb{B}}$ be a compact subset of $\mathbb{R}^{3}$ containing $\bigcup_{i=0}^{2}g^{i}(\mathbb{B})$. 
For any $\varPhi \in C^{0}(\widehat{\mathbb{B}},\mathbb{R})$,  we have
\begin{equation}\label{eq7.1}
\int \varPhi d\nu_{x}^{(\widehat N_{k_{s}}, \widehat N_{k_{s+1}})}
=\frac{1}{\widehat N_{k_{s+1}}-\widehat N_{k_{s}}}\sum_{i=\widehat N_{k_{s}}}^{\widehat N_{k_{s+1}}-1}\varPhi\circ g^{i}(x).
\end{equation}
\begin{clam}\label{clam1}
For any  $x\in \mathbb{D}_{k_{1}}$, 
\[
\lim_{s\to+\infty}\left|
\int \varPhi d\nu_{x}^{(\widehat N_{k_{s}},\widehat N_{k_{s+1}})}
-
\int \varPhi d\nu_{x}^{(0,\widehat N_{k_{s+1}})}
\right|=0.
\]
\end{clam}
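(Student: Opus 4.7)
The plan is to recognize $\nu_x^{(0,\widehat N_{k_{s+1}})}$ as a convex combination of empirical measures over an ``old era'' $[0,\widehat N_{k_s})$ and a ``new era'' $[\widehat N_{k_s},\widehat N_{k_{s+1}})$, and then use the era condition \eqref{era-ratio} to show that the weight of the old era vanishes. Concretely, splitting the sum in the definition of the empirical measure gives
\[
\nu_x^{(0,\widehat N_{k_{s+1}})} \;=\; \frac{\widehat N_{k_s}}{\widehat N_{k_{s+1}}}\,\nu_x^{(0,\widehat N_{k_s})} \;+\; \frac{\widehat N_{k_{s+1}}-\widehat N_{k_s}}{\widehat N_{k_{s+1}}}\,\nu_x^{(\widehat N_{k_s},\widehat N_{k_{s+1}})},
\]
so integrating $\varPhi$ and subtracting $\int\varPhi\,d\nu_x^{(\widehat N_{k_s},\widehat N_{k_{s+1}})}$ yields
\[
\left|\int \varPhi\,d\nu_x^{(0,\widehat N_{k_{s+1}})} - \int \varPhi\,d\nu_x^{(\widehat N_{k_s},\widehat N_{k_{s+1}})}\right| \;\leq\; \frac{2\widehat N_{k_s}}{\widehat N_{k_{s+1}}}\,\|\varPhi\|_\infty,
\]
where $\|\varPhi\|_\infty$ is finite by the compactness of $\widehat{\mathbb{B}}$. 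The claim thus reduces to showing $\widehat N_{k_s}/\widehat N_{k_{s+1}}\to 0$ as $s\to\infty$.

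For this, I combine the quadratic asymptotics of $\widehat n_k$ with the era condition. From \eqref{eq8-2} and \eqref{eq8-3}, $\widehat n_k = k^2 + O(k)$, so for any fixed $\epsilon>0$ one has $k^2 \leq \widehat n_k + 2 \leq (1+\epsilon)k^2$ once $k$ is large enough; the finitely many smaller indices contribute only a bounded additive constant $C$ to $\widehat N_{k_s}$. Consequently,
\[
\widehat N_{k_s} \;\leq\; (1+\epsilon)\sum_{k=k_1}^{k_s-1} k^2 + C, \qquad \widehat N_{k_{s+1}}-\widehat N_{k_s} \;\geq\; \sum_{k=k_s}^{k_{s+1}-1} k^2.
\]
Plugging in the era condition $\sum_{k=k_s}^{k_{s+1}-1} k^2 > s\sum_{k=k_1}^{k_s-1} k^2$ gives
\[
\widehat N_{k_{s+1}} - \widehat N_{k_s} \;>\; s\sum_{k=k_1}^{k_s-1} k^2 \;\geq\; \frac{s}{1+\epsilon}\,(\widehat N_{k_s}-C),
\]
which rearranges to $\widehat N_{k_s}/\widehat N_{k_{s+1}} = O(1/s) \to 0$ as $s\to\infty$.

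The expected obstacle is minor and purely bookkeeping: one has to isolate the small-$k$ transient from the large-$k$ quadratic regime so that the multiplicative constants on the two sides of the era inequality match up. Once a suitable threshold on $k$ is chosen, both the upper bound on $\widehat N_{k_s}$ and the lower bound on $\widehat N_{k_{s+1}}-\widehat N_{k_s}$ follow immediately from the quadratic condition \eqref{k2}, and the era condition \eqref{era-ratio} delivers the required decay of the ratio.
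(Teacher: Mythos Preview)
Your proof is correct and follows essentially the same route as the paper: both reduce the claim to the bound $\bigl|\int\varPhi\,d\nu_x^{(\widehat N_{k_s},\widehat N_{k_{s+1}})}-\int\varPhi\,d\nu_x^{(0,\widehat N_{k_{s+1}})}\bigr|\le 2(\widehat N_{k_s}/\widehat N_{k_{s+1}})\|\varPhi\|_{C^0}$ and then use the era condition \eqref{era-ratio} to force $\widehat N_{k_s}/\widehat N_{k_{s+1}}\to 0$. Your convex-combination presentation is marginally cleaner than the paper's $A_s+B_s$ split, and you are in fact more careful than the paper in translating the era condition (which is stated for the sums $\sum k^2$) into the needed decay of $\widehat N_{k_s}/\widehat N_{k_{s+1}}$ via the quadratic asymptotics $\widehat n_k=k^2+O(k)$, a step the paper leaves implicit.
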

\noindent
Here we show the claim.
Consider 
\[
\left|A_{s}+B_{s}\right|=
\left|
\int \varPhi d\nu_{x}^{(\widehat N_{k_{s}},\widehat N_{k_{s+1}})}
-
\int \varPhi d\nu_{x}^{(0,\widehat N_{k_{s+1}})}
\right|,
\]
where 
\begin{align*}
& A_{s}=\frac{1}{\widehat N_{k_{s+1}}-\widehat N_{k_{s}}}\sum_{i=\widehat N_{k_{s}}}^{\widehat N_{k_{s+1}}-1}\varPhi\circ g^{i}(x)
-
\frac{1}{\widehat N_{k_{s+1}}}\sum_{i=\widehat N_{k_{s}}}^{\widehat N_{k_{s+1}}-1}\varPhi\circ g^{i}(x),
\\
& B_{s}=
\frac{1}{\widehat N_{k_{s+1}}}\sum_{i=\widehat N_{k_{s}}}^{\widehat N_{k_{s+1}}-1}\varPhi\circ g^{i}(x)
-
\frac{1}{\widehat N_{k_{s+1}}}\sum_{i=0}^{\widehat N_{k_{s+1}}-1}\varPhi\circ g^{i}(x).
\end{align*}
Thus, 
the proof will be complete if 
 $|A_{s}|$ and $|B_{s}|$ converge to $0$ as $s\to 0$. 
In fact, 
\begin{multline*}
|A_{s}|\leq
\left|
\frac{(\widehat N_{k_{s+1}}-\widehat N_{k_{s}})
(\widehat N_{k_{s+1}}-(\widehat N_{k_{s+1}}-\widehat N_{k_{s}}))
\|\varPhi\|_{C^{0}}}{
(\widehat N_{k_{s+1}}-\widehat N_{k_{s}})\widehat N_{k_{s+1}}
}
\right|
\\
=
\frac{\widehat N_{k_{s}}}{\widehat N_{k_{s+1}}}\|\varPhi\|_{C^{0}}
<
\frac{1}{1+s} \|\varPhi\|_{C^{0}},
\end{multline*}
where the last inequality follows from \eqref{era-ratio}. On the other hand,
\[
|B_{s}|
\leq
\frac{\left|(\widehat N_{k_{s+1}}-\widehat N_{k_{s}})-\widehat N_{k_{s+1}}\right|\|\varPhi\|_{C^{0}}
}{\widehat N_{k_{s+1}}}
=\frac{\widehat N_{k_{s}}}{\widehat N_{k_{s+1}}}\|\varPhi\|_{C^{0}}
<\frac{1}{1+s} \|\varPhi\|_{C^{0}}.
\]
Hence, $|A_{s}|, |B_{s}|\to 0$ as $s\to +\infty$.
This ends the proof of Claim \ref{clam1}.
\medskip

Based on the result of Claim \ref{clam1},  we focus  only on \eqref{eq7.1}.
So we divide it into three parts as follows:
\[
\frac{1}{\widehat N_{k_{s+1}}-\widehat N_{k_{s}}}
\sum_{i=\widehat N_{k_{s}}}^{\widehat N_{k_{s+1}}-1} \varPhi(g^{i}(x))=
\frac{1}{\widehat N_{k_{s+1}}-\widehat N_{k_{s}}}
(S_{1}+S_{2}+S_{3}),\] 
where 
\begin{align*}
S_{1}&=\sum_{i=\widehat N_{k_{s}}}^{\widehat N_{k_{s}}+(n_{0}+k_{s}+Lk_{s})-1}\varPhi(g^{i}(x)),\quad
S_{2}=\sum_{i=\widehat N_{k_{s}}+(n_{0}+k_{s}+Lk_{s})}^{\widehat N_{k_{s}}+(n_{0}+k_{s}+Lk_{s})+k_{s}^{2}-1}\varPhi(g^{i}(x)),\\
S_{3}&=\sum_{i=\widehat N_{k_{s}}+(n_{0}+k_{s}+Lk_{s})+k_{s}^{2}}^{\widehat N_{k_{s+1}}-1}\varPhi(g^{i}(x)).
\end{align*}
Note that 
the number of terms in the sum of $S_{1}$ and $S_{3}$ 
is $O(k_{s})$, while that of $S_{2}$ is $k_{s}^{2}$. 
Since  
\[\widehat N_{k_{s+1}}-\widehat N_{k_{s}}=\widehat n_{k_{s}}+2=(n_{0}+k_{s}+Lk_{s})+k_{s}^{2}+m_{k_{s}}+2=k_{s}^{2}+O(k_{s}),\]
we have  
\begin{multline}\label{eq7.2}
\lim_{s\to +\infty}
\left|
\frac{1}{\widehat N_{k_{s+1}}-\widehat N_{k_{s}}}
\sum_{i=\widehat N_{k_{s}}}^{\widehat N_{k_{s+1}}} \varPhi(g^{i}(x))
-\frac{S_{2}}{k_{s}^{2}}\right|
\\
=\lim_{s\to +\infty}
\left|\left(
\frac{S_{1}}{\widehat n_{k_{s}}+2}
+\frac{k_{s}^{2}}{\widehat n_{k_{s}}+2}\left(\frac{S_{2}}{k_{s}^{2}}\right)
+\frac{S_{3}}{\widehat n_{k_{s}}+2}
\right)-\frac{S_{2}}{k_{s}^{2}}
\right|
=0.
\end{multline}
For simplicity, write
$
\widehat x:=g^{\widehat N_{k}+(n_{0}+k_{s}+Lk_{s})}(x)
$
and hence  
\[S_{2}=\sum_{j=0}^{k_{s}^{2}-1}\varPhi(g^{j}(\widehat x)).\]

It is sufficient to prove the following claim for Theorem \ref{hist}:  
\begin{clam}\label{clam2}
\[
\lim_{s\to+\infty} \frac{S_{2}}{k_{s}^{2}}
=\left\{
\begin{array}{ll}
(3\varPhi(P_{g})+\varPhi(Q_{g}))/4  & \text{if $s$ is even},\\[3pt]
(7\varPhi(P_{g})+\varPhi(Q_{g}))/8 & \text{if $s$ is odd},
\end{array}\right.
\]
where $P_{g}$ and $Q_{g}$ are the continuations of the fixed points $P$ and $Q$, respectively.
\end{clam}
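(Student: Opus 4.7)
The plan is to split $S_2$ according to the two blocks of $\underline{v}^{(k_s)}$ dictated by the code condition, and then to show that within each block the orbit of $\widehat{x}$ under $g$ spends all but a bounded (in $s$) number of iterations arbitrarily close to the appropriate fixed point of $f\vert_{\Lambda}$.

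\textbf{Step 1 (Reduction to $f$-dynamics near $P,Q$).} The perturbation $h_{\underline{t}(L)}$ defined in \eqref{def-h} is supported in $U_{3\delta/2}$, which by \eqref{ev} and the choice of $\delta$ is disjoint from neighbourhoods of $P$ and $Q$. Hence $g=f$ on small neighbourhoods of $P$ and $Q$, the continuations satisfy $P_g=P$ and $Q_g=Q$, and it suffices to show the averages converge to the stated limits with $\varPhi(P),\varPhi(Q)$ in place of $\varPhi(P_g),\varPhi(Q_g)$. Moreover, because $\widehat{x}\in g^{\,n_0+k_s+Lk_s}(\mathbb{D}_{k_s})$, the itinerary of $\widehat{x},g(\widehat{x}),\dots,g^{k_s^{2}-1}(\widehat{x})$ follows the code $\underline{v}^{(k_s)}$ through $\mathbb{V}_0$ and $\mathbb{V}_1$.

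\textbf{Step 2 (Tracking in the zero-block).} Treating the even case, set $n_s:=\lfloor 3k_s^{2}/4\rfloor$. By \eqref{h1}, the first $n_s$ iterates of $\widehat{x}$ lie in $\mathbb{V}_0$, where $g=f$ acts as the linear map $(x,y,z)\mapsto(\lambda_u x,\lambda_{ss}y,\lambda_{cs0}z)$. Writing $\widehat{x}=(x_0,y_0,z_0)$, the requirement that $g^j(\widehat{x})\in\mathbb{V}_0$ for every $0\leq j\leq n_s-1$ forces $x_0\leq\lambda_u^{-n_s}$, while $y_0,z_0\in[0,1]$. Consequently, for any $\epsilon>0$,
\[
|g^j(\widehat{x})-P|<\epsilon\quad\text{whenever}\quad M_1(\epsilon)\leq j\leq n_s-M_2(\epsilon),
\]
where $M_1(\epsilon):=\lceil\log(1/\epsilon)/\log\min(\lambda_{ss}^{-1},\lambda_{cs0}^{-1})\rceil$ and $M_2(\epsilon):=\lceil\log(1/\epsilon)/\log\lambda_u\rceil$ are constants depending only on $\epsilon$. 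Thus the number of exceptional iterations is at most $M_1(\epsilon)+M_2(\epsilon)=:C(\epsilon)$, independent of $s$. Uniform continuity of $\varPhi$ on the compact set $\widehat{\mathbb{B}}$ provides a modulus $\eta(\epsilon)\to 0$ and gives
\[
\Biggl|\sum_{j=0}^{n_s-1}\varPhi(g^j(\widehat{x}))-n_s\,\varPhi(P)\Biggr|\leq n_s\,\eta(\epsilon)+C(\epsilon)\,\|\varPhi\|_{C^{0}}.
\]

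\textbf{Step 3 (Tracking in the one-block and conclusion).} On $\mathbb{V}_1$, $g=f$ is the affine map with fixed point $Q$; in coordinates centred at $Q$ its derivative is $\mathrm{diag}(-\lambda_u,-\lambda_{ss},\lambda_{cs1})$, so the same linear tracking argument applies verbatim and yields, for the one-block of length $k_s^{2}-n_s=\lceil k_s^{2}/4\rceil$,
\[
\Biggl|\sum_{j=n_s}^{k_s^{2}-1}\varPhi(g^j(\widehat{x}))-(k_s^{2}-n_s)\,\varPhi(Q)\Biggr|\leq(k_s^{2}-n_s)\,\eta(\epsilon)+C'(\epsilon)\,\|\varPhi\|_{C^{0}}
\]
for some $C'(\epsilon)$ independent of $s$. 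Dividing the sum of these two estimates by $k_s^{2}$ and using $n_s/k_s^{2}\to 3/4$,
\[
\limsup_{s\to\infty}\Biggl|\frac{S_{2}}{k_s^{2}}-\Bigl(\tfrac{3}{4}\varPhi(P)+\tfrac{1}{4}\varPhi(Q)\Bigr)\Biggr|\leq \eta(\epsilon),
\]
and letting $\epsilon\to 0$ settles the even case. The odd case is identical, with block lengths $\lfloor 7k_s^{2}/8\rfloor$ and $\lceil k_s^{2}/8\rceil$ producing the weights $7/8$ and $1/8$. The only delicate point is the tracking estimate in Step 2: the key observation is that trapping $n_s$ iterates of the expanding $x$-direction inside $\mathbb{V}_0$ forces $x_0$ to be exponentially small in $n_s$, so that after only logarithmically many steps all three coordinates of $g^j(\widehat{x})$ are simultaneously within $\epsilon$ of $P$; the uniformity of the resulting bound $C(\epsilon)$ in $s$ is what makes the averages converge.
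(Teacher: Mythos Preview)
Your argument is correct and follows essentially the same strategy as the paper: both proofs split $S_2$ into the zero-block and the one-block of $\underline{v}^{(k_s)}$, and show that inside each block the orbit spends all but $O(1)$ iterations within distance $\epsilon$ of the corresponding fixed point. The paper organizes this via the auxiliary counts $N_s^{(P_g)},\widetilde N_s^{(P_g)},N_s^{(Q_g)}$ and the ratio limits $N_s^{(P_g)}/\widetilde N_s^{(P_g)}\to 1$, etc., whereas you exploit the affine form of $g=f$ on $\mathbb{V}_0$ and $\mathbb{V}_1$ directly to bound the number of exceptional iterations by an explicit constant $C(\epsilon)$ independent of~$s$; this is a cleaner packaging of exactly the same mechanism, and in fact supplies the justification for the ratio limits that the paper only asserts.
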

\noindent
To prove this claim, 
define 
\begin{align*}
N_{s}^{(P_{g})}=N_{s}^{(P_{g})}(\rho)
&=\max\left\{N>0:\ 
\text{$g^{i}(\widehat x)\in U_{\rho}(P_{g})$ for $0\leq \forall i\leq N$}
\right\},\\
\widetilde{N}_{s}^{(P_{g})}
&=\max \left\{N>0:\ 
\text{$g^{i}(\widehat x)\in \mathbb{V}_{0}$ for $0\leq \forall i\leq N$}
\right\},\\
N_{s}^{(Q_{g})}=N_{s}^{(Q_{g})}(\rho)
&=\max\left\{N>0:\ 
\text{$g^{i}(\widehat x)\in U_{\rho}(Q_{g})$ for $N_{s}^{(P_{g})}\leq \forall i< N$}\right\},
\end{align*}
where $U_{\rho}(P_{g})$ and 
$U_{\rho}(Q_{g})$ are the $\rho$-neighbourhoods of $P_{g}$ and 
$Q_{g}$, respectively, 
for a given constant $\rho>0$, 
and $\mathbb{V}_{0}$ is the component of $g^{-1}(\mathbb{B})\cap \mathbb{B}$ containing $P_{g}$. 
Using them, we have
\begin{multline*}
\sum_{i=0}^{k_{s}^{2}-1}\varPhi(g^{i}(\widehat x))
= 
\sum_{i=0}^{N_{s}^{(P_{g})}-1} \varPhi(g^{i}(\widehat x))
+\sum_{i=N_{s}^{(P_{g})}}^{\widetilde N_{s}^{(P_{g})}-1} \varPhi(g^{i}(\widehat x))\\
+\sum_{i=\widetilde N_{s}^{(P_{g})}}^{N_{s}^{(Q_{g})}-1} \varPhi(g^{i}(\widehat x))
+\sum_{i=N_{s}^{(Q_{g})}}^{k_{s}^{2}-1}\varPhi(g^{i}(\widehat x)).
\end{multline*}
%\begin{align*}
%\begin{autobreak}
%\sum_{i=0}^{k_{s}^{2}-1}\varPhi(g^{i}(\widehat x))
%= 
%\sum_{i=0}^{N_{s}^{(P_{g})}-1} \varPhi(g^{i}(\widehat x))
%+\sum_{i=N_{s}^{(P_{g})}}^{\widetilde N_{s}^{(P_{g})}-1} \varPhi(g^{i}(\widehat x))
%+\sum_{i=\widetilde N_{s}^{(P_{g})}}^{N_{s}^{(Q_{g})}-1} \varPhi(g^{i}(\widehat x))
%+\sum_{i=N_{s}^{(Q_{g})}}^{k_{s}^{2}-1}\varPhi(g^{i}(\widehat x)).
%\end{autobreak}
%\end{align*}
For any small $\varepsilon>0$, there is a $\rho>0$ such that 
%\[
%\sum_{j=0}^{N_{s}^{(P_{g})}-1}\varPhi(g^{j}(\widehat x))
%<N_{s}^{(P_{g})}(\varPhi(P_{g})+\varepsilon),\ 
%\sum_{i=\widetilde N_{s}^{(P_{g})}}^{N_{s}^{(Q_{g})}-1}\varPhi(g^{j}(\widehat x))
%< (N_{s}^{(Q_{g})}-\widetilde N_{s}^{(P_{g})})(\varPhi(Q_{g})+\varepsilon).
%\]
\[
\left|
\frac{\sum_{j=0}^{N_{s}^{(P_{g})}-1}\varPhi(g^{j}(\widehat x))}{N_{s}^{(P_{g})}}-\varPhi(P_{g})
\right|
<\varepsilon,\quad 
\left|
\frac{
\sum_{i=\widetilde N_{s}^{(P_{g})}}^{N_{s}^{(Q_{g})}-1}\varPhi(g^{j}(\widehat x))
}{
N_{s}^{(Q_{g})}-\widetilde N_{s}^{(P_{g})}
}-\varPhi(Q_{g})
\right|
< \varepsilon.
\]
It implies that 
\begin{multline*}
\frac{1}{k_{s}^{2}}\sum_{i=0}^{k_{s}^{2}-1}\varPhi(g^{i}(\widehat x))
<
\frac{ N_{s}^{(P_{g})}}{\widetilde N_{s}^{(P_{g})}}
\frac{\widetilde N_{s}^{(P_{g})}}{k_{s}^{2}}(\varPhi(P_{g})+\varepsilon)
+\frac{1}{k_{s}^{2}}
\sum_{i=N_{s}^{(P_{g})}}^{\widetilde N_{s}^{(P_{g})}-1} \varPhi(g^{i}(x))
\\
+
\frac{1}{k_{s}^{2}}
\sum_{i=N_{s}^{(Q_{g})}}^{k_{s}^{2}-1} \varPhi(g^{i}(x))
+\frac{N_{s}^{(Q_{g})}-\widetilde N_{s}^{(P_{g})}}{k_{s}^{2}-\widetilde N_{s}^{(P_{g})}}
\frac{k_{s}^{2}-\widetilde N_{s}^{(P_{g})}}{k_{s}^{2}}(\varPhi(Q_{g})+\varepsilon),
\end{multline*}
and 
\begin{multline*}
\frac{1}{k_{s}^{2}}\sum_{i=0}^{k_{s}^{2}-1}\varPhi(g^{i}(\widehat x))
>
\frac{ N_{s}^{(P_{g})}}{\widetilde N_{s}^{(P_{g})}}
\frac{\widetilde N_{s}^{(P_{g})}}{k_{s}^{2}}(\varPhi(P_{g})-\varepsilon)
+
\frac{1}{k_{s}^{2}}
\sum_{i=N_{s}^{(P_{g})}}^{\widetilde N_{s}^{(P_{g})}-1}\varPhi(g^{i}(x))\\
+
\frac{1}{k_{s}^{2}}
\sum_{i=N_{s}^{(Q_{g})}}^{k_{s}^{2}-1}\varPhi(g^{i}(x))
+\frac{N_{s}^{(Q_{g})}-\widetilde N_{s}^{(P_{g})}}{k_{s}^{2}-\widetilde N_{s}^{(P_{g})}}
\frac{k_{s}^{2}-\widetilde N_{s}^{(P_{g})}}{k_{s}^{2}}(\varPhi(Q_{g})-\varepsilon).
\end{multline*}

%Thus, 
%\begin{multline*}
%\frac{1}{k_{s+1}-k_{s}+1}
%\sum_{i=k_{s}}^{k_{s+1}}\varPhi(g^{i}(x))\leq
%\frac{1}{\widetilde N_{s}^{(P_{g})}+\widetilde N_{s}^{(Q_{g})}}
%\sum_{i=k_{s}}^{k_{s+1}}\varPhi(g^{i}(x))
%\\
%<
%\frac{ N_{s}^{(P_{g})}}{\widetilde N_{s}^{(Q_{g})}}
%\frac{\widetilde N_{s}^{(Q_{g})}}{\widetilde N_{s}^{(P_{g})}
%+\widetilde N_{s}^{(Q_{g})}}(\varPhi(P)+\varepsilon)
%+\frac{N_{s}^{(Q_{g})}}{\widetilde N_{s}^{(P_{g})}}
%\frac{\widetilde N_{s}^{(P_{g})}}{\widetilde N_{s}^{(P_{g})}+\widetilde N_{s}^{(Q_{g})}}(\varPhi(Q)+\varepsilon)
%\\
%+
%\frac{1}{\tilde N_{s}^{(P_{g})}+\tilde N_{s}^{(Q_{g})}}
%\left(
%\sum_{i=k_{s}+N_{s}^{(P_{g})}+1}^{k_{s}+\widetilde{N}_{s}}
%+
%\sum_{i=k_{s+1}-\widetilde N_{s}^{(Q_{g})}}^{k_{s+1}-(N_{s}^{(Q_{g})}+1)}
%\right)\varPhi(g^{i}(x)).
%\end{multline*}
Since $\rho$ is already fixed, 
it follows from code conditions \eqref{h1} and \eqref{h2} that 
 \[ 
 \lim_{s\to +\infty} \frac{N_{s}^{(P_{g})}}{\widetilde{N}_{s}^{(P_{g})}}=
 \lim_{s\to +\infty} \frac{N_{s}^{(Q_{g})}-\widetilde N_{s}^{(P_{g})}}{k_{s}^{2}-\widetilde N_{s}^{(P_{g})}}
 =1,
 \]
and 
\[
 \lim_{s\to +\infty}\frac{\widetilde N_{s}^{(P_{g})}}{k_{s}^{2}}=
  \left\{\begin{array}{ll}
  3/4 & \text{if $s$ is even,} \\[2pt]
 7/8 & \text{if $s$ is odd,}\end{array}\right.\quad
 \lim_{s\to +\infty}\frac{k_{s}^{2}-\widetilde N_{s}^{(P_{g})}}{k_{s}^{2}}= \left\{\begin{array}{ll}
  1/4 & \text{if $s$ is even,} \\[2pt]
 1/8 & \text{if $s$ is odd.}\end{array}\right.
\]
Moreover, 
\[
\lim_{s\to +\infty}
\frac{1}{k_{s}^{2}}
\left(
\sum_{i=N_{s}^{(P_{g})}}^{\widetilde N_{s}^{(P_{g})}-1}
\varPhi(g^{i}(x))
+
\sum_{i=N_{s}^{(Q_{g})}}^{k_{s}^{2}-1}
\varPhi(g^{i}(x))
\right)
= 0.
\]
This finishes the proof of Claim \ref{clam2}.
\medskip

Finally, 
by Claims \ref{clam1} and \ref{clam2} together with \eqref{eq7.2},
\[
\lim_{s\to+\infty}
\int \varPhi d\nu_{x}^{(0,\widehat N_{k_{s+1}})}
=\left\{
\begin{array}{ll}
(3\varPhi(P_{g})+\varPhi(Q_{g}))/4  & \text{if $s$ is even},\\[3pt]
(7\varPhi(P_{g})+\varPhi(Q_{g}))/8 & \text{if $s$ is odd}.
\end{array}\right.
\]
We complete the proof of Theorem \ref{hist}.
\end{proof}

\subsection{Physicality}
The final discussion in this 
paper concerns the existence of non-trivial Dirac physical measure 
associated with a contracting  wandering domain in Theorem \ref{thm1}. 
To show it we need not to take any era condition as \eqref{era-ratio} into account. 
Moreover instead of \eqref{h1} and \eqref{h2}  
we adopt simpler code condition, which is the same as that  in  \cite[Section 9]{CV01}, as follows:
\begin{description}
\item[Code condition (for Dirac physical measure supported on $P_{g}$)] For a given integer $k>0$, 
we suppose that 
the freedom part $\underline{v}^{(k)}$ of
the itinerary $\underline{\widehat w}^{(k)}$ in \eqref{eq8-2}
consists of  
$k^{2}$ zeros, that is, 
\begin{equation}\label{code3}
\underline{v}^{(k)}=\underline{0}^{k^{2}}=\overbrace{
0000\ldots 00}^{k^{2}}.
%\quad |\underline{v}^{(k)}|=k^{2}.
\end{equation}
\end{description}
Note that  \eqref{code3} is not contradict to 
the quadratic condition \eqref{k2} and the majority condition \eqref{dom}.

\begin{rmk}\label{obs}
On the other hand, 
since the itinerary $\underline{1}^{k^{2}}$ does not meet  \eqref{dom}, 
 the other saddle fixed point
$Q_{g}$ may not be a support of the Dirac physical measure. See also  Remark \ref{tg}.
\end{rmk}

\begin{thm}\label{phys}
There exist an integer $k_{2}>0$ and a 
sequence $\boldsymbol{v}=(\underline{v}^{(k)})_{k> \max\{k_{1}, k_{2}\}}$ of codes 
such that $g=g_{\boldsymbol{v}}$  has the non-trivial Dirac physical measure supported on $P_{g}$ associated with 
the contracting wandering domain $\mathbb{D}_{k}$.
\end{thm}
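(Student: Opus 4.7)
The plan is to follow the framework of the proof of Theorem \ref{hist}, but with the simpler code condition \eqref{code3} that kills the alternation between eras. First I would verify that the sequence $\boldsymbol{v} = (\underline{0}^{k^2})_{k>k_2}$ satisfies \eqref{k2} and \eqref{dom}, so Theorem \ref{thmWD} applies and $\mathbb{D}_k = \mathrm{Int}(\mathbb{W}_k)$ is indeed a contracting wandering domain for $g = g_{\boldsymbol{v}}$ with $g^{\widehat n_k + 2}(\mathbb{D}_k) \subset \mathbb{D}_{k+1}$. Fix $x \in \mathbb{D}_{k_1}$, a test function $\varPhi \in C^0(\widehat{\mathbb{B}}, \mathbb{R})$, and set $\widehat N_k = \sum_{j=k_1}^{k-1}(\widehat n_j + 2)$. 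It suffices to show that the empirical measures $\nu_x^{(0,\widehat N_k)}$ converge in the weak-$\ast$ topology to $\delta_{P_g}$, together with a standard argument to pass from the subsequence $\widehat N_k$ to arbitrary $n \to \infty$ (the oscillation inside a single block is bounded by $(\widehat n_k + 2)\|\varPhi\|_{C^0}/n$, which is $o(1)$ since $\widehat n_{k+1}/\widehat N_{k+1} \to 0$ by the quadratic growth).

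The block decomposition is the core of the argument. Writing
$$\int \varPhi\, d\nu_x^{(\widehat N_k, \widehat N_{k+1})} = \frac{1}{\widehat n_k + 2}(S_1 + S_2 + S_3 + S_4),$$
where $S_1, S_3, S_4$ correspond respectively to the initial sub-itinerary $\underline{w}^{(k+Lk)}$ of length $n_0 + k + Lk$, the tail sub-itinerary $\underline{\gamma}^{(k)}$ of length $m_k = O(k)$, and the two tangency-transition iterates, while $S_2$ is the sum over the middle $k^2$ iterations corresponding to $\underline{v}^{(k)} = \underline{0}^{k^2}$. Since $|S_1|, |S_3|, |S_4|$ are all $O(k)\|\varPhi\|_{C^0}$ while $\widehat n_k + 2 = k^2 + O(k)$, exactly as in \eqref{eq7.2} we have
$$\lim_{k\to+\infty}\left|\int \varPhi\, d\nu_x^{(\widehat N_k, \widehat N_{k+1})} - \frac{S_2}{k^2}\right| = 0.$$
A telescoping estimate analogous to Claim \ref{clam1} (in fact simpler, since no era asymmetry is needed) then reduces convergence of $\int \varPhi\, d\nu_x^{(0, \widehat N_k)}$ to the limiting behaviour of $S_2/k^2$.

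The key calculation is that the middle block of $k^2$ iterations with itinerary $0^{k^2}$ spends almost all of its time in a small neighbourhood of $P_g$. Setting $\widehat x = g^{\widehat N_k + (n_0 + k + Lk)}(x)$, the point $\widehat x$ lies in $\mathbb{V}_0$ with $x$-coordinate of order $\lambda_u^{-(k^2 + O(k))}$, because $\widehat x$ belongs to a u-sub-bridge whose generation up to this step is $n_0 + k + Lk + k^2$. Under the linear action \eqref{blender} on $\mathbb{V}_0$, the $x$-coordinate of $g^i(\widehat x)$ is $\lambda_u^{i - k^2 + O(k)}$, the $y$-coordinate contracts by $\lambda_{ss}^i$, and the $z$-coordinate contracts by $\lambda_{cs0}^i$. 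Hence, for any $\rho > 0$, one has $g^i(\widehat x) \in U_\rho(P_g)$ for all $0 \le i \le k^2 - O(\log \rho^{-1}) - O(k)$, so the proportion of iterates in $U_\rho(P_g)$ is $1 - o(1)$. Given $\varepsilon > 0$, pick $\rho$ with $|\varPhi(y) - \varPhi(P_g)| < \varepsilon$ on $U_\rho(P_g)$; then
$$\left|\frac{S_2}{k^2} - \varPhi(P_g)\right| \le \varepsilon + \frac{O(\log\rho^{-1} + k)}{k^2}\cdot 2\|\varPhi\|_{C^0},$$
whence $S_2/k^2 \to \varPhi(P_g)$ and, combining everything, $\nu_x^{(0,n)} \to \delta_{P_g}$. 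Since $\mathbb{D}_k$ has positive Lebesgue measure and is contained in the basin of $\delta_{P_g}$, and $P_g$ is a saddle, $\delta_{P_g}$ is a non-trivial Dirac physical measure.

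The main obstacle I expect is the bookkeeping in the linearisation estimate above: one must ensure that throughout the $k^2$ iterates the orbit remains in the linear chart $\mathbb{V}_0$ where \eqref{blender} is valid, and that the $y$ and $z$ coordinates of $\widehat x$ are themselves small enough so that contractions keep $g^i(\widehat x)$ inside $U_\rho(P_g)$ once the $x$-coordinate is small. This follows from the explicit control of $\mathbb{W}_k$ in Section \ref{s7} — in particular from Proposition \ref{y-width}, which forces $Y_k \to \{1/2\}$, and from the fact that $g^{\widehat N_k + (n_0 + k + Lk)}$ already lands in a region close to the stable manifold of $P_g$ — but it should be written down carefully using the formula of Lemma \ref{lem7.4} applied to the sub-segment rather than the whole return.
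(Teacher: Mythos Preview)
Your proposal is correct and takes essentially the same route as the paper: the code condition \eqref{code3}, the block decomposition with the dominant $k^{2}$-middle piece, and the fact that the proportion of middle iterates lying in any fixed neighbourhood of $P_g$ tends to $1$. The only cosmetic difference is that the paper localises near $P_g$ symbolically, via the cylinders $\mathbb{B}^{\rm u}(u;\underline{0}^{u})\cap\mathbb{B}^{\rm ss}(s;\underline{0}^{s})$ (so that at least $k^{2}-(u+s)$ of the middle iterates land there), whereas you track the linear coordinates from \eqref{blender} directly; the two descriptions are interchangeable.

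One small correction, since your obstacle paragraph heads in the wrong direction: the range ``$0\le i\le k^{2}-O(\log\rho^{-1})$'' is off at the bottom, and the remedy is not to force $y_0,z_0$ small at $\widehat x$ via Proposition~\ref{y-width} (that proposition does not give $Y_k\to\{1/2\}$, and in any case $\widehat x$ is $(n_0+k+Lk)$ iterates past $\mathbb{D}_k$, with $y$- and $z$-coordinates merely in $[0,1]$). The actual fix is simpler: since $y_0,z_0\in[0,1]$, one has $\lambda_{ss}^{i}y_0<\rho$ and $\lambda_{cs0}^{i}z_0<\rho$ once $i\ge C\log\rho^{-1}$, so the correct range is $C\log\rho^{-1}\le i\le k^{2}-C\log\rho^{-1}$, still of proportion $1-o(1)$. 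This is precisely what the paper's choice of the integers $u,s$ encodes.
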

\begin{proof}
For any given integers $u, s>0$, 
we take an integer $k_{2}$ which satisfies
\[k_{2}^{2}>u+s.\]
Moreover,  we write
\[\mathbb{B}^{\rm ss}(s; \underline{w})=\left\{x\in \mathbb{B}\ :\  g^{-i}(x)\in \mathbb{V}_{w_{i}},\
i=1,\ldots,s\right\}.\]

Hereafter,  we suppose that the code condition \eqref{code3} holds for every 
\[k\geq \max\{k_{1}, k_{2}\},\] 
where $k_{1}$ is the integer given in Theorem \ref{thmWD}.
Consider 
the wandering domain $\mathbb{D}_{k}$ 
with \eqref{eq8-1}.
First, observe that, for any integers $u, s>0$,
\[P_{g}\in \mathbb{B}^{\rm u}(u; \underline{0}^{(u)})\cap\mathbb{B}^{\rm ss}(s; \underline{0}^{(s)}),\]
where $P_{g}$ is the saddle fixed point of the cs-blender horseshoe $\Lambda_{g}$ and 
$\mathbb{B}^{\rm u}(u; \underline{0}^{(u)})$ 
is the u-bridge given in Subsection \ref{ss4.1}.

Next, verify the following facts: 
\begin{itemize}
\item It follows 
from Proposition \ref{proj-w} that 
\[\mathbb{D}_{k}\subset 
\mathbb{G}^{\rm u}(\widehat n_{k},\underline{\widehat w}^{(k)}),
\]
where 
$\mathbb{G}^{\rm u}(\widehat n_{k},\underline{\widehat w}^{(k)})$ 
is the u-gap given in Subsection \ref{ss4.1}.
\item Thus, under the code condition \eqref{code3}, 
at least $k^{2}-(u+s)$ of the first $\widehat n_{k}-1$ iterates of $\mathbb{D}_{k}$ 
 are contained in the interior of 
$\mathbb{B}^{\rm u}(u; \underline{0}^{u})\cap\mathbb{B}^{\rm ss}(s; \underline{0}^{s})$.
\end{itemize}
The corresponding fact for a 2-dimensional  horseshoe  is stated by Colli-Vargas 
 \cite[Section 9]{CV01}.
The cs-blender horseshoe, on the other hand, 
is a 3-dimensional object, but this fact is actually the same as for the 2-dimensional horseshoe.
Based on this, the following evaluations will be conducted.

For any integer $\widehat k>0$, we here define  
\[
\widehat N_{\widehat k}=\sum_{i=0}^{\widehat k} (\widehat n_{k+i}+2).
\]
Then for any 
$x\in \mathbb{D}_{k}$, the empirical probability measure 
from $\widehat N_{\widehat k-1}$ to $\widehat N_{\widehat k}$
is 
\[
\nu_{x}^{(\widehat N_{\widehat k-1}, \widehat N_{\widehat k})}
=\frac{1}{\widehat N_{\widehat k}-\widehat N_{\widehat k-1}}
\sum_{i=\widehat N_{\widehat k-1}}^{\widehat N_{\widehat k}-1}\delta_{g^{i}(x)}
=\frac{1}{\widehat n_{k+\widehat k}+2}
\sum_{i=\widehat N_{\widehat k-1}}^{\widehat N_{\widehat k}-1}\delta_{g^{i}(x)}.
\]
Hence, 
for any $\varPhi \in C^{0}(\widehat{\mathbb{B}},\mathbb{R})$, 
\[
\int \varPhi d\nu_{x}^{(\widehat N_{\widehat k-1}, \widehat N_{\widehat k})}
=\frac{1}{\widehat n_{k+\hat k}+2}
\sum_{i=\widehat N_{\widehat k-1}}^{\widehat N_{\widehat k}-1}\varPhi\circ g^{i}(x)
=\frac{1}{\widehat n_{k+\hat k}+2}
\sum_{j=0}^{\widehat n_{k+\widehat k}+1}\varPhi\circ g^{j}(x_{\widehat N_{\widehat k-1}}),
\]
where $x_{\widehat N_{\widehat k-1}}=g^{\widehat N_{\widehat k-1}}(x)$.
Taking \eqref{eq8-3} into account, 
let us divide the sum from $0$ to $\widehat n_{k+\widehat k}+1$ into the following three parts:
\begin{multline*}
\sum_{j=0}^{\widehat n_{k+\widehat k}+1}\varPhi\circ g^{j}(x_{\widehat N_{\widehat k-1}})
=
\sum_{j=0}^{\widehat K+u-1} \varPhi\circ g^{j}(x_{\widehat N_{\widehat k-1}})\\
+
\sum_{j=\widehat K+u}^{\widehat K+(k+\widehat k)^{2}-s} \varPhi\circ g^{j}(x_{\widehat N_{\widehat k-1}})
+
\sum_{j=\widehat K+(k+\widehat k)^{2}-s+1}^{\widehat n_{k+\widehat k}+1}
\varPhi\circ g^{j}(x_{\widehat N_{\widehat k-1}}),
\end{multline*}
where $\widehat K=n_{0}+(k+\widehat k)+L(k+\widehat k)$. Note that 
\[\widehat n_{k+\widehat k}=\widehat K+(k+\widehat k)^{2}+m_{k+\widehat k}
=(k+\widehat k)^{2}+O(k+\widehat k).\]
Therefore, 
for any $\varepsilon>0$, 
there exist integers $u_{0}$, $s_{0}>0$ such that, 
for any $u\geq u_{0}$ and  $s\geq s_{0}$, 
\begin{multline*}
\sum_{j=0}^{\widehat n_{k+\widehat k}+1}\varPhi\circ g^{j}(x_{\widehat N_{\widehat k-1}})
\leq
(\widehat K+u)\|\varPhi\|_{C^{0}}\\
+
\left((k+\widehat k)^{2}-s-u+1\right)\left(\varPhi(P_{g})+\varepsilon \right)
+
(m_{k+\widehat k}+s)\|\varPhi\|_{C^{0}},
\end{multline*}
and 
\[
\sum_{j=0}^{\widehat n_{k+\widehat k}+1}\varPhi\circ g^{j}(x_{\widehat N_{\widehat k-1}})
\geq
\left((k+\widehat k)^{2}-s-u+1\right)
(\varPhi(P_{g})-\varepsilon).
\]

In consequence,  it follows from the code condition \eqref{code3} that  
for any sufficiently large $\widehat k$, 
\[
\varPhi(P_{g})-2\varepsilon
\leq
\frac{1}{\widehat n_{k+\widehat k}+2}
\sum_{j=0}^{\widehat n_{k+\widehat k}+1}\varPhi\circ g^{j}(x_{\widehat N_{\widehat k-1}})
\leq
\varPhi(P_{g})+2\varepsilon,
\]
and hence
\[
\left| \int \varPhi d\nu_{x}^{(\widehat N_{\widehat k-1}, \widehat N_{\widehat k})} -
\int \varPhi d\delta_{P_{g}}
\right|<2\varepsilon.
\]
That is, 
$\nu_{x}^{(\widehat N_{\widehat k-1}, \widehat N_{\widehat k})}$ 
converges to $\delta_{P_{g}}$ as $\widehat k\to +\infty$ 
in the weak*-topology.
\end{proof}

\begin{rmk}\label{tg}
Instead of \eqref{code3}, for any positive integer $k$ and 
$n\geq 2$, 
consider a $n$-periodic itinerary such that 
\[
\underline{v}^{(k)}
=\overbrace{
\underbrace{000\ldots01}_{n}
\underbrace{000\ldots01}_{n}
\ldots\ldots
\underbrace{000\ldots01}_{n}
\underbrace{000\ldots00}_{k^{2}-\lfloor k^{2}/n\rfloor n}
}^{k^{2}},
\]
where$\lfloor\cdot\rfloor$ stands for the floor function.
Since 
\[\lim_{k\to+\infty}\frac{k^{2}-\lfloor k^{2}/n\rfloor n}{k^{2}}=0,\]
$\underline{v}^{(k)}$ still satisfies both \eqref{k2} and \eqref{dom}.
Then, for such a $\underline{v}^{(k)}$, by the same procedure as in proof of Theorem \ref{phys}, 
one can obtain the non-trivial Dirac physical measure associated with the wandering domain 
supported 
by the $n$-periodic orbit.
\end{rmk}

\appendix
\section{}\label{apdx}

%We here recall some of basic definitions, which already appeared above. 
%A diffeomorphism $f$ with
%a transitive hyperbolic set $\Lambda$ 
%has a \emph{homoclinic tangency} of $\Lambda$
%if there is a pair of points $x, y\in \Lambda$ 
%such that the unstable manifold $\umfd(x)$ of $x$ and the stable manifold $\smfd(y)$ of $y$ 
%have some non-transverse intersection.
%Moreover such a homoclinic tangency is 
%\emph{$C^{r}$-robust} if 
%there is a $C^{r}$-neighbourhood $\mathcal{N}$ of $f$ such that for any $g\in \mathcal{N}$  
%the continuation $\Lambda_{g}$ of $\Lambda$  has a homoclinic tangency. 

To show Lemma \ref{rt}, we only need to verify the existence of a
folding manifold which is contained in $\smfd(\Lambda)$, 
because it implies that $f$ has 
a $C^{1}$-robust homoclinic tangency of $\Lambda$ from \cite[Theorem 4.8]{BD12}.
Here the \emph{folding manifold} of $\Lambda$ is a 2-dimensional manifold with 
the following conditions:
\begin{itemize}
\item $\mathcal{S}=\bigcup_{t\in[t_{-},t_{+}]}\mathcal{S}_t$,  where $t_{\pm}\in \mathbb{R}$ with $t_{-}<t_{+}$
and  
$\mathcal{S}_{t}$ is 
a (1-dimensional) ss-disc of $\mathbb{B}$;
\item both $\mathcal{S}_{t_{-}}$ and $\mathcal{S}_{t_{+}}$ intersect $\lumfd(P)$;
\item for any $t\in (t_{-},t_{+})$, $\mathcal{S}_{t}$ lies  between $\lumfd(P)$ and $\lumfd(Q)$.
\end{itemize}
\begin{proof}[Proof of Lemma \ref{rt}]
Let $\ell_{0}$ and $\ell_{1}$ be the 
parallel edges of $\mathbb{B}$ given as 
\[\ell_{0}=[1/2-\delta, 1/2+\delta]\times(0,0),\  
\ell_{1}=[1/2-\delta, 1/2+\delta]\times(1,1).\] 
 By \eqref{tang},  
$\tilde \ell_{0}=f^{2}(\ell_{0})$ 
and 
$\tilde \ell_{1}=f^{2}(\ell_{1})$ 
are  contained in the quadratic curves, respectively, as  
\[
\left\{
 x=-a_{1}a_{4}^{-2}\Bigl(z-\frac{1}{2}\Bigr)^{2}, y=\frac{1}{2}-\frac{a_{3}}{2}
\right\},\ 
\left\{
x=-a_{1}a_{4}^{-2}\Bigl(z-\frac{1}{2}\Bigr)^{2}+a_{2}, y=\frac{1}{2}+\frac{a_{3}}{2}
\right\}.
\]
See Figure 
\ref{tang_fig}. 
 \begin{figure}[hbt]
\centering
\scalebox{0.6}{
\includegraphics[clip]{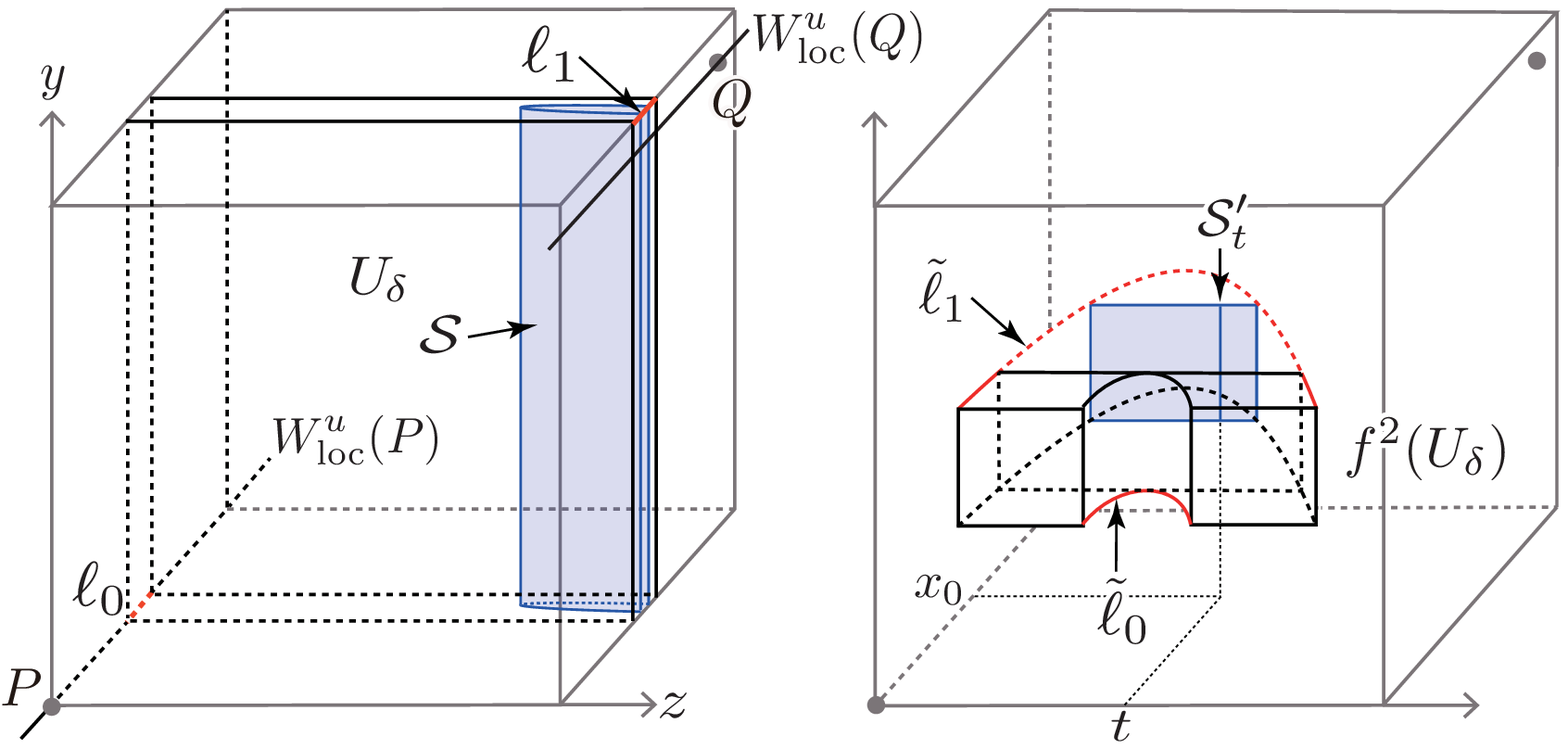}
}
\caption{} 
\label{tang_fig}
\end{figure}

For a given $0<x_{0}< a_{2}$, 
we write 
\[t_{\pm}(x_{0})=\frac{1}{2}\pm \sqrt{a_{1}^{-1}a_{4}(a_{2}-x_{0})}.\] 
By the second condition in \eqref{a_i}, the value inside the root is positive.
For any $t\in [t_{-}(x_{0}), t_{+}(x_{0})]$, 
consider the vertical ss-disc defined  as 
\[
\mathcal{S}_{t}^{\prime}=\mathcal{S}_{t}^{\prime}(x_{0})=\{ x_{0}\}\times \Bigl[\frac{1}{2}-\frac{a_{3}}{2}, \frac{1}{2}+\frac{a_{3}}{2}\Bigr]\times \{t\}.
\]
Note that 
$\mathcal{S}_{t}^{\prime}(x_{0})$ can be contained in
$\lsmfd(\Lambda)$ if one chooses $x_0$ appropriately. 
Let 
$\mathcal{S}^{\prime}$ be 
the collection of all of $\mathcal{S}_{t}^{\prime}$ with $t\in [t_{-}(x_{0}), t_{+}(x_{0})]$. 
Observe that the intersection of 
$\mathcal{S}^{\prime}$ and $\tilde \ell_{0}$ consists of 
two transverse points.  
It 
implies that 
 $\mathcal{S}_{t_{-}(x_{0})}$ and  $\mathcal{S}_{t_{+}(x_{0})}$ intersect $\lumfd(Q)$. 
 Moreover, it follows from \eqref{blender} and \eqref{tang} 
 that $f$ preserves the $y$-direction. Thus
 $\mathcal{S}_{t}=f^{-1}(\mathcal{S}_{t}^{\prime})$ 
is an ss-disc. 
In consequence, 
 $\mathcal{S}=f^{-1}(\mathcal{S}^{\prime})$ 
 is a folding stable manifold of $\Lambda$.
\end{proof}

\section*{Acknowledgements}
The authors thank Pablo~G.~Barrientos and  Artem Raibekas 
for their comments and suggestions during the planning stages of this study.

\bibliographystyle{amsalpha}
%\bibliographystyle{natbib}

%\bibliography{ref}
\newcommand{\etalchar}[1]{$^{#1}$}
\def\cprime{$'$}
\providecommand{\bysame}{\leavevmode\hbox to3em{\hrulefill}\thinspace}
\providecommand{\MR}{\relax\ifhmode\unskip\space\fi MR }
% \MRhref is called by the amsart/book/proc definition of \MR.
\providecommand{\MRhref}[2]{%
  \href{http://www.ams.org/mathscinet-getitem?mr=#1}{#2}
}
\providecommand{\href}[2]{#2}

\end{document}